\definecolor{red}{rgb}{1.00,0.00,0.00}
\definecolor{blue}{rgb}{0.00,0.00,0.63}
\definecolor{black}{rgb}{0.00,0.00,0.00}
\definecolor{purple}{rgb}{0.00,1.00,0.00}
\definecolor{pink}{rgb}{0.95,0.01,0.08}
\newtheorem{theorem}{Theorem}[section]
\newtheorem{lemma}{Lemma}[section]
\newtheorem{proposition}{Proposition}[section]
\newtheorem{remark}{Remark}[section]
\newtheorem{definition}{Definition}[section]
\numberwithin{equation}{section}
\begin{document}

\title{{\LARGE \textbf{Exponential Stability of the Inhomogeneous Navier-Stokes-Vlasov System in Vacuum}}}

\author[a,b]{Hai-Liang Li}

\author[c]{Ling-Yun Shou}

\author[a]{Yue Zhang\thanks{Corresponding author}}

\affil[a]{School of Mathematical Sciences, Capital Normal University, Beijing  100048, P. R. China}

\affil[b]{Academy for Multidisciplinary Studies, Capital Normal University, Beijing  100048, P. R. China}

\affil[c]{School of Mathematics, Nanjing University of Aeronautics and
Astronautics, Nanjing  211106, P. R. China}

\date{}
\renewcommand*{\Affilfont}{\small\it}
\maketitle

\begin{abstract}
In this paper, we study the asymptotic behaviors of solutions to the inhomogeneous Navier-Stokes-Vlasov system in $\mathbb{R}^{3}\times\mathbb{R}^{3}$, where the initial fluid density is allowed to vanish. We establish the uniform bound of the macroscopic density associated with the distribution function and prove the global existence and uniqueness of strong solutions to the Cauchy problem with vacuum for either small initial energy or large viscosity coefficient. The uniform boundedness and the presence of vacuum enable us to show that as the time evolves, the fluid velocity decays, while the distribution function concentrates towards a Dirac measure in velocity centred at $0$, with an exponential rate. In order to overcome the degeneracy in the momentum equations, we develop an energy argument based on higher order functional inequalities designed for fluid-particle coupled structures.
\end{abstract}

\footnotetext{E-mails: hailiang.li.math@gmail.com (H.-L. Li), shoulingyun11@gmail.com (L.-Y. Shou), yuezhangmath@126.com (Y. Zhang).}
\footnotetext{MSC2020: 35Q30, 35Q83, 35A01, 35B40.}
\footnotetext{Keywords: Inhomogeneous Navier-Stokes-Vlasov system, Cauchy problem, vacuum, global existence, exponential stability.}
\footnotetext{Funding: The first and third authors are partially supported by the National Natural Science Foundation of China (11931010, 12226326), the Beijing Scholar Foundation of Beijing Municipal Committee, and the key research project of Academy for Multidisciplinary Studies, Capital Normal University. The second author is supported by the National Natural Science Foundation of China (12301275) and the China Postdoctoral Science Foundation (2023M741694).}

\section{Introduction}

The transport of small particles dispersed in dense fluids can be described by the fluid-particle models which have a wide range of applications including chemical engineering, pollution settling processes, medical treatments, diesel engines, etc \cite{Amsden1989,Berres2003,Caflisch1983,Jabin2000,ORourke,Williams}.

We investigate the inhomogeneous Navier-Stokes-Vlasov system for the fluid-particle motion:
\begin{equation}\label{NSV}
\left\{
\begin{aligned}
&\rho_{t}+{\rm{div}}_{x}(\rho u)=0,\\
&(\rho u)_{t}+{\rm{div}}_{x}(\rho u\otimes u)+\nabla_{x}P=\mu\Delta_{x}u-\int_{\mathbb{R}^{3}}\kappa(u-v)f{\rm{d}}v,\\
&{\rm{div}}_{x}u=0,\\
&f_{t}+v\cdot\nabla_{x}f+{\rm{div}}_{v}(\kappa (u-v)f)=0,
\end{aligned}
\right.
\end{equation}
where $\rho=\rho(t,x)\geq 0$, $u=(u_{1},u_{2},u_{3})(t,x)$, and $P=P(t,x)$ denote the density, the velocity, and the pressure of the fluid, respectively, at the time $t\in\mathbb{R}_{+}$ and the position $x=(x_{1},x_{2},x_{2})\in\mathbb{R}^{3}$, and $f=f(t,x,v)\geq 0$ is the distribution function of particles moving with the velocity $v=(v_{1},v_{2},v_{3})\in\mathbb{R}^{3}$. In addition, the constants $\mu>0$ and $\kappa>0$ stand for the viscosity coefficient and the drag force coefficient, respectively.
In this paper, we consider the Cauchy problem of \eqref{NSV} subject to the initial data
\begin{equation}\label{inda}
\rho(0,x)=\rho_{0}(x),~~u(0,x)=u_{0}(x),~~f(0,x,v)=f_{0}(x,v),\quad(x,v)\in\mathbb{R}^{3}\times\mathbb{R}^{3}.
\end{equation}

The Navier-Stokes-Vlasov systems have been intensively investigated with much significant mathematical progress, refer to \cite{Anoshchenko,Boudin2009,Hofer,Carrillo2011,Chae2011,Yu,Boudin2017,HanKwan2020,HanK2020,Goudon2010,Wang2015,Carrillo2016,Su2023,Ertzbischoff} and the references therein. The global existence of weak solutions for the unsteady Stokes-Vlasov system was proved by Hamdache \cite{Hamdache} in bounded domains with specular reflection boundary conditions. The convergence of the inertialess limit for the Vlasov equation coupled with the steady Stokes flow has been rigorously justified by H\"ofer \cite{Hofer}. For the incompressible Navier-Stokes-Vlasov model, we mention the known results \cite{Anoshchenko,Boudin2009,Carrillo2016,Yu,Boudin2017} about global existence of weak solutions in the case of bounded domains, periodic domains, or time-dependent domains. Han-Kwan et al. \cite{HanK2020} proved the uniqueness of weak solutions with  a mild decay assumption on the initial distribution function. Glass et al. \cite{Glass2018} investigated the asymptotic stability of stationary states in a pipe with partially absorbing boundary conditions. Under the condition that the initial energy and the $\dot{H}^{\frac{1}{2}}$-norm of the initial velocity are sufficiently small, Han-Kwan et al. \cite{HanKwan2020} established the uniform-in-time boundedness of the macroscopic density for the particles and proved the exponential-in-time stability of global weak solutions to the initial value problem in spatial periodic domains. The methods in \cite{HanKwan2020} were then applied to the study of the large-time behaviors of weak solutions in spatial bounded domains with exponential decay rates \cite{Ertzbischoff2021} and in the whole space/half space with algebraic decay rates \cite{HanK,Ertzbischoff}. Very recently, Han-Kwan and Michel \cite{HanK2021} developed this framework to investigate hydrodynamic limits of the incompressible Navier-Stokes-Vlasov system in high friction regimes.

For the inhomogeneous Navier-Stokes-Vlasov system \eqref{NSV}, Wang and Yu \cite{Wang2015} showed the global existence of weak solutions to the initial boundary value problem, Choi and Kwon \cite{Choi2015} studied the large-time solvability of strong solutions to the initial value problem with small initial data in either spatial periodic domains or the whole space. The exponential time-decay rate in the spatial periodic space was also obtained in \cite{Choi2015}, provided that the solutions satisfy some uniform-in-time estimates. Su et al. \cite{Su2023} justified the hydrodynamic limit of system \eqref{NSV} to a macroscopical two-fluid system. Furthermore, much important progress has been made recently on the well-posedness and asymptotic behaviors of solutions for other related fluid-particle models, cf. \cite{Li2021,Li2023,Carrillo2006,Carrillo2011,Chae2011,Chae2013,Choi2021,Mellet2008,Mellet2007,Duan2013,Goudon2010,Li2017,Cao2021,Li2022} as well as references therein.

In the present paper, we aim to analyze the influence of vacuum on large-time asymptotics of global solutions for the inhomogeneous Navier-Stokes-Vlasov system \eqref{NSV} in $\mathbb{R}^{3}\times\mathbb{R}^{3}$. Based on the structure of the fluid-particle interactions and the appearance of the vacuum, we establish some uniform a-priori estimates with respect to time, which allow us not only to obtain the global-in-time existence of weak and strong solutions to the Cauchy problem \eqref{NSV}--\eqref{inda} with vacuum, but also to justify the {\emph{exponential}} decay-in-time property. The exponential stability for \eqref{NSV} observed in this paper is different from the algebraic decay rates for the homogeneous case in unbounded domains \cite{HanK,Ertzbischoff}.

\vspace{2mm}

Before stating the main results, we present some notations below.

\noindent\textbf{Notations.} 
For $p \in [1,+\infty]$ and integer $k \geq 0$, the simplified Lebesgue and Sobolev spaces are given by
\begin{equation*}
\left\{
\begin{aligned}
&L^{p}:=L^{p}(\mathbb{R}^{3}),~~ W^{k,p}:=W^{k,p}(\mathbb{R}^{3}),~~ H^{k}:= W^{k,2},\\
&D^{1}=\{u\in L^6|\,\nabla u\in L^{2}\},~~ C_{0,\sigma}^{\infty}:=\{u\in C_{0}^{\infty}(\mathbb{R}^{3})|\,{\rm{div}}\,u=0\},~~ D_{0,\sigma}^{1}:=\overline{C_{0,\infty}^{\infty}}^{\|\cdot\|_{D^{1}}},\\
&L_{x,v}^{p}:=L^{p}(\mathbb{R}^{3}\times \mathbb{R}^{3}) ,~~ W_{x,v}^{k,p}:=W^{k,p}(\mathbb{R}^{3}\times \mathbb{R}^{3}).
\end{aligned}
\right.
\end{equation*}
We define the kinetic energy and dissipation of system \eqref{NSV} as
\begin{equation}\label{enrg}
E(t):=\frac{1}{2}\int_{\mathbb{R}^{3}}\rho|u|^{2}{\rm{d}}x+\frac{1}{2}\int_{\mathbb{R}^{3}\times\mathbb{R}^{3}}|v|^{2}f{\rm{d}}v{\rm{d}}x,
\end{equation}
and
\begin{equation}\label{dspt}
D(t):=\int_{\mathbb{R}^{3}}\mu|\nabla u|^{2}{\rm{d}}x+\int_{\mathbb{R}^{3}\times\mathbb{R}^{3}}\kappa|u-v|^{2}f{\rm{d}}v{\rm{d}}x,
\end{equation}
respectively. For any strong solution $(\rho,u,f)$ to the system \eqref{NSV}, one has the basic energy inequality
\begin{equation}\label{enrgineq}
\frac{{\rm{d}}}{{\rm{d}}t}E(t)+D(t)\leq 0.
\end{equation}
In addition, we define the corresponding initial energy
\begin{equation}\label{inenrg}
E_{0}:=E(0)=\frac{1}{2}\int_{\mathbb{R}^{3}}\rho_{0}|u_{0}|^{2}{\rm{d}}x+\frac{1}{2}\int_{\mathbb{R}^{3}\times\mathbb{R}^{3}}|v|^{2}f_{0}{\rm{d}}v{\rm{d}}x,
\end{equation}
and a constant corresponding to the initial dissipation
\begin{equation}\label{indspt}
M:=\int_{\mathbb{R}^{3}}|\nabla u_{0}|^{2}{\rm{d}}x+\int_{\mathbb{R}^{3}\times\mathbb{R}^{3}}|u_{0}-v|^{2}f_{0} {\rm{d}}v{\rm{d}}x.
\end{equation}
Let $n_{f}$, $j_{f}$ and $e_{f}$ be the macroscopical density, momentum and energy, respectively, related to the
moments of the distribution function $f$ in \eqref{NSV} as follows
\begin{align*}
n_{f}(t,x):=\int_{\mathbb{R}^{3}}f(t,x,v) {\rm{d}}v,\quad j_{f}(t,x):=\int_{\mathbb{R}^{3}}vf(t,x,v) {\rm{d}}v,\quad e_{f}(t,x):=\frac{1}{2}\int_{\mathbb{R}^{3}}|v|^{2}f(t,x,v) {\rm{d}}v.
\end{align*}
Throughout this paper, $C>0$ ($C_{T}>0$) denotes some positive constants independent (dependent) of $E_{0}$, $\mu$, $\kappa$ and time.

\vspace{2mm}

First, we have the global existence, uniqueness, and large-time behaviors of strong solutions to the Cauchy problem \eqref{NSV}--\eqref{inda} with vacuum. We show that the fluid velocity $u$ and the distribution function $f$ asymptotically converge at an exponential rate to their equilibrium state $0$ and $n^{*}\otimes\delta_{v}$, respectively.

\begin{theorem}\label{main1}
Let $\mu>0$, $\kappa\geq1$ and $R_{0}>0$ be given constants. Assume that the initial data $(\rho_{0},u_{0},f_{0})$ satisfies
\begin{equation}\label{indacd}
\left\{
\begin{aligned}
&0\leq \rho_{0}\in L^{\frac{3}{2}}\cap L^{\infty}\cap H^{1},\\
& \sqrt{\rho_{0}}u_{0}\in L^{2},~~ u_{0}\in D^{1}_{0,\sigma},\\
&0\leq f_{0}\in  L^{1}_{x,v}\cap L^{\infty}_{x,v} \cap W^{1,\frac{3}{2}}_{x,v},\\
&{\rm{Supp}}_{v}f_{0}(x,v)\subset\{v\in\mathbb{R}^{3}|\,|v|\leq R_{0}\}.
\end{aligned}
\right.
\end{equation}
Then, there exists a constant $\varepsilon_{0}>0$ depending on $M$, $\|\rho_{0}\|_{L^{\frac{3}{2}}\cap L^{\infty}}$, $\|f_{0}\|_{L^{1}_{x,v}}$ and $\|(1+|v|^{2})f_{0}\|_{ L^{1}_{v}(L^{\infty}_{x})}$ such that if
\begin{equation}\label{inenrgcd}
E_{0}\leq\min\{\mu^\frac{10}{7},\,\mu^{13}\}\kappa^{-16}\varepsilon_{0},
\end{equation}
the Cauchy problem \eqref{NSV}--\eqref{inda} admits a unique global solution $(\rho,u,f)$ satisfying for any $T>0$ that
\begin{equation}\label{result1}
\left\{
\begin{aligned}
&\rho\in C([0,T];L^{\frac{3}{2}}\cap H^{1})\cap L^{\infty}(0,T;L^{\infty}), ~~ \sqrt{\rho}u_{t}\in L^{2}(0,T;L^{2}),\\
&u\in L^{\infty}(0,T;D^{1}_{0,\sigma})\cap L^{1}(0,T;W^{1,\infty}),~~ P\in L^{2}(0,T;H^{1}),\\
&\sqrt{t}(\nabla u, P)\in L^{\infty}(0,T;H^{1})\cap L^{2}(0,T;W^{1,6}), \\
&\sqrt{t} \sqrt{\rho}u_{t}\in L^{\infty}(0,T;L^{2})\cap L^{2}(0,T;L^{6}),\\
&f\in C([0,T];L_{x,v}^{1}\cap W^{1,\frac{3}{2}}_{x,v})\cap L^{\infty}(0,T;L^{\infty}_{x,v}),\\
\end{aligned}
\right.
\end{equation}
and for any $(t,x,v)\in(0,T]\times\mathbb{R}^{3}\times\mathbb{R}^{3}$ that
\begin{equation}\label{result2}
\left\{
\begin{aligned}
&0\leq \rho(t,x)\leq \|\rho_{0}\|_{L^{\infty}},~~ \|\nabla \rho(t)\|_{L^{2}}\leq 2\|\nabla \rho_{0}\|_{L^{2}},\\
&0\leq f(t,x,v)\leq e^{3\kappa T}\|f_{0}\|_{L^{\infty}_{x,v}},~~ 0\leq n_{f}(t,x)\leq 2\|f_{0}\|_{L^{1}_{v}(L^{\infty}_{x})}.
\end{aligned}
\right.
\end{equation}
Moreover, for any $t\in[1,+\infty)$, there exists an asymptotic profile $n^{*}$ of $n_{f}$ such that
\begin{equation}\label{result3}
\left\{
\begin{aligned}
&E(t)+D(t)+\|\nabla^{2} u(t)\|_{L^{2}}+\|P(t)\|_{H^{1}}\leq C_{1}e^{-\lambda_{1} t},\\
&W_1(f(t),n^{*}\otimes\delta_{v})+W_1(n_{f}(t),n^{*})\leq C_{1}e^{-\lambda_{1} t},\\
&\|j_{f}(t)\|_{L^{\infty}}+\|e_{f}(t)\|_{L^{\infty}}\leq C_{1}e^{-\lambda_{1} t},\\
&{\rm{Supp}}_{v}f(t,x,v)\subset\{v\in\mathbb{R}^{3}~|~|v|\leq (R_{0}+C_{1})e^{-\lambda_{1}t}\},
\end{aligned}
\right.
\end{equation}
where $C_{1}>0$ and $\lambda_{1}>0$ are two constants dependent of $\mu$, $\kappa$ but independent of the time $t$, $W_1$ represents the $1$-Wasserstein distance {\rm(}refer to Definition {\rm\ref{defwass}}{\rm)}, and $\delta_{v}$ denotes the Dirac measure in velocity supported at $\{v=0\}$.
\end{theorem}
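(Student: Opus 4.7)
The plan is a continuation argument: establish local well-posedness, then derive uniform-in-time a priori estimates in the regularity class \eqref{result1}--\eqref{result2} under the smallness condition \eqref{inenrgcd}, and finally upgrade these bounds to the exponential decay \eqref{result3}. Local existence on some interval $[0,T^{\ast}]$ follows from a linearization/iteration scheme: regularize $\rho_{0}$ away from vacuum and mollify $f_{0}$, then alternately solve the transport equation for $\rho$, the Stokes problem for $u$ with the drag treated as a source, and the linear Vlasov equation for $f$; passing to the limit in the regularization parameter produces the local strong solution.

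For the a priori macroscopic bounds, the divergence-free condition $\mathrm{div}\, u = 0$ immediately gives $\|\rho(t)\|_{L^{p}}=\|\rho_{0}\|_{L^{p}}$ for all $p\in[1,\infty]$, while $\|\nabla\rho(t)\|_{L^{2}}\leq\|\nabla\rho_{0}\|_{L^{2}}\exp(C\int_{0}^{t}\|\nabla u\|_{L^{\infty}}\,ds)$. The Vlasov characteristics $\dot{X}=V$, $\dot{V}=\kappa(u(t,X)-V)$ yield the velocity-support bound
\begin{equation*}
|V(t)|\leq e^{-\kappa t}|V(0)|+\kappa\int_{0}^{t}e^{-\kappa(t-s)}\|u(s)\|_{L^{\infty}}\,ds,
\end{equation*}
and $\mathrm{div}_{v}(\kappa(u-v))=-3\kappa$ gives $\|f(t)\|_{L^{\infty}_{x,v}}\leq e^{3\kappa t}\|f_{0}\|_{L^{\infty}_{x,v}}$. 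The delicate uniform estimate $n_{f}(t,x)\leq 2\|f_{0}\|_{L^{1}_{v}(L^{\infty}_{x})}$ is obtained through a change-of-variables argument on the velocity characteristics in the spirit of \cite{HanKwan2020}, which requires an a priori smallness of $\|u\|_{L^{1}_{t}(L^{\infty}_{x})}$ that is part of the continuity loop.

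The central step is the higher-order energy estimate for $u$ with vacuum present. Testing the momentum equation by $u_{t}$ and using the transport equation yields
\begin{equation*}
\frac{\mu}{2}\frac{d}{dt}\|\nabla u\|_{L^{2}}^{2}+\|\sqrt{\rho}\,u_{t}\|_{L^{2}}^{2}=-\int_{\mathbb{R}^{3}}\rho u\cdot\nabla u\cdot u_{t}\,dx-\int_{\mathbb{R}^{3}\times\mathbb{R}^{3}}\kappa(u-v)f\cdot u_{t}\,dv\,dx.
\end{equation*}
The convective term is controlled by $\|\rho\|_{L^{\infty}}^{1/2}\|u\|_{L^{6}}\|\nabla u\|_{L^{3}}\|\sqrt{\rho}\,u_{t}\|_{L^{2}}$, interpolated by Sobolev embedding and absorbed into the dissipation via the smallness of $E_{0}$. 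The drag integral is split into a piece linear in $u$ (handled through the uniform $n_{f}$ bound) and a moment piece $\int vf\,dv$ (handled through the shrinking velocity support); the degeneracy of $\rho$ is bypassed by testing against $u_{t}$ rather than $u$, and $\|\nabla^{2}u\|_{L^{2}}$ and $\|\nabla P\|_{L^{2}}$ are recovered a posteriori from the Stokes estimate applied to $-\mu\Delta u+\nabla P=-\rho u_{t}-\rho u\cdot\nabla u-\int\kappa(u-v)f\,dv$. A time weight $\sqrt{t}$ removes the initial singularity and produces the regularity listed in \eqref{result1}. The precise scaling $E_{0}\leq\min\{\mu^{10/7},\mu^{13}\}\kappa^{-16}\varepsilon_{0}$ is exactly what is needed to close these interpolation inequalities.

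The exponential decay is then obtained by coupling the basic energy inequality \eqref{enrgineq} with a Poincar\'e-type estimate $\int_{\mathbb{R}^{3}}\rho|u|^{2}\,dx\leq C\|\rho_{0}\|_{L^{3/2}}\|\nabla u\|_{L^{2}}^{2}$, which converts viscous dissipation into decay of the fluid kinetic energy, and with the support-contraction bound $e_{f}(t)\leq R(t)^{2}\|n_{f}(t)\|_{L^{\infty}}/2$ in which $R(t)$ shrinks exponentially once $\|u\|_{L^{\infty}}$ is integrably small. Together with the higher-order bounds on $\nabla^{2}u$ and $P$ from the previous step, this yields the first three lines of \eqref{result3}. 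The Wasserstein statement follows from $\partial_{t}n_{f}=-\mathrm{div}_{x}j_{f}$ with $\|j_{f}\|_{L^{\infty}}\leq R(t)\|n_{f}\|_{L^{\infty}}$ exponentially small, so $n_{f}(t)$ is Cauchy in $W_{1}$ with a limit $n^{\ast}$, and then $W_{1}(f(t),n^{\ast}\otimes\delta_{v})$ is controlled by $W_{1}(n_{f}(t),n^{\ast})+R(t)\|f_{0}\|_{L^{1}_{x,v}}$. The principal obstacle, as signaled in the abstract, is the closure of the higher-order energy estimate: sustaining coercivity of $\mu\|\nabla u\|_{L^{2}}^{2}$ in the vacuum region while exchanging with the drag dissipation, which is itself coercive only through the uniform $n_{f}$ bound, forces the introduction of the fluid-particle functional inequalities that govern the scaling in \eqref{inenrgcd}.
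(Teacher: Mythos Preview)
Your overall architecture---local existence, a bootstrap on the Lipschitz norm of $u$, the change-of-variables bound on $n_{f}$ \`a la \cite{HanKwan2020}, Stokes estimates for $\nabla^{2}u$ and $P$, and the Wasserstein argument via $\partial_{t}n_{f}=-\mathrm{div}_{x}j_{f}$---matches the paper. But there is a genuine gap in how you handle the drag term after testing the momentum equation by $u_{t}$. Your proposed splitting of $\int\kappa(u-v)f\cdot u_{t}\,dv\,dx$ into a piece linear in $u$ and a moment piece $\int vf\,dv$ does not close: in vacuum the only pointwise control on $u_{t}$ produced at this stage is $\|\sqrt{\rho}\,u_{t}\|_{L^{2}}$, and neither $\|n_{f}u\cdot u_{t}\|_{L^{1}}$ nor $\|j_{f}\cdot u_{t}\|_{L^{1}}$ can be absorbed without an unweighted $u_{t}$ bound (or $\|\nabla u_{t}\|_{L^{2}}$, which only appears one level higher). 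The paper's resolution is a structural identity obtained by using the Vlasov equation for $f_{t}$:
\[
\kappa\!\int u_{t}\cdot(u-v)f\,dv\,dx
=\frac{\kappa}{2}\frac{d}{dt}\!\int|u-v|^{2}f\,dv\,dx+\kappa^{2}\!\int|u-v|^{2}f\,dv\,dx-\kappa\!\int(v\cdot\nabla_{x}u)\cdot(u-v)f\,dv\,dx,
\]
so the drag term is absorbed into $\frac{d}{dt}D(t)$ with $D(t)=\mu\|\nabla u\|_{L^{2}}^{2}+\kappa\int|u-v|^{2}f$, and one obtains $\frac{d}{dt}D(t)+\mathcal{E}(t)\lesssim(\kappa\|\nabla u\|_{L^{\infty}}+\mu^{-3}\|\nabla u\|_{L^{2}}^{4})D(t)$ with $\mathcal{E}(t)=\|\sqrt{\rho}\,u_{t}\|_{L^{2}}^{2}+\kappa^{2}\int|u-v|^{2}f$. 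An analogous identity at the $\mathcal{E}$-level (differentiating the momentum equation in $t$ and again using $\eqref{NSV}_{4}$ for $f_{t}$) yields a second closed inequality. These are exactly the ``fluid--particle functional inequalities'' you mention at the end; they are not a side remark but the mechanism that makes the bootstrap close and dictates the powers in \eqref{inenrgcd}.

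Your decay argument also has the causality inverted. You derive exponential decay of the particle energy from the support contraction $R(t)\to 0$, but $R(t)\le e^{-\kappa t}R_{0}+\kappa\int_{0}^{t}e^{-\kappa(t-s)}\|u(s)\|_{L^{\infty}}\,ds$ shrinks exponentially only if $\|u\|_{L^{\infty}}$ already decays exponentially, which in turn requires decay of $E(t)$. The paper avoids this loop: from $\int|v|^{2}f\le 2\int|u-v|^{2}f+2\int n_{f}|u|^{2}$ and $\int n_{f}|u|^{2}\le\|n_{f}\|_{L^{3/2}}\|u\|_{L^{6}}^{2}\le C\|\nabla u\|_{L^{2}}^{2}$, the uniform $L^{3/2}$ bound on $n_{f}$ alone gives the coercivity $D(t)\ge 2\alpha E(t)$, hence $E(t)\le e^{-2\alpha t}E_{0}$ directly from \eqref{enrgineq}. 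The support contraction and the decay of $j_{f},e_{f}$ are then \emph{consequences}, not inputs.
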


\begin{remark}
Some comments are in order.
\begin{itemize}
\item In \eqref{result3}, the large-time behavior of the energy $E(t)$ provides the $L^{2}$-decay of $\sqrt{\rho}u$ but without the $L^{2}$-decay of $u$ due to the influence of vacuum. However, we are able to derive the $L^{6}\cap L^{\infty}$-decay of $u$ since we capture the exponential rates of the dissipation $D(t)$ and the high-order norm $\|\nabla^{2}u(t)\|_{L^{2}}$ for large times. Indeed, it follows from the Gagliardo-Nirenberg-Sobolev inequalities that
\begin{equation*}
\|u(t)\|_{L^{6}\cap L^{\infty}}\leq C_{1}e^{-\lambda_{1}t}.
\end{equation*}
From the definition of $D(t)$, we also obtain
\begin{equation*}
 \|\sqrt{f}(u-v)(t)\|_{L^{2}_{x,v}}^{2}\leq C_{1}e^{-\lambda_{1}t},
\end{equation*}
which implies the alignment between the fluid velocity $u$ and the particle velocity $v$ as the time evolves.

\item Different from the interesting works {\rm\cite{Ertzbischoff,HanK}} concerning algebraic decay rates of the homogeneous incompressible Navier-Stokes-Vlasov equations {\rm(}i.e., $\rho\equiv 1$ or $\rho$ is bounded from below in \eqref{NSV}{\rm)}, we obtain an exponential decay rate in the unbounded spatial domain $\mathbb{R}^{3}$. This comes from the fact that the fluid energy $\frac{1}{2}\|\sqrt{\rho}u \|_{L^{2}}^{2}$ can be completely controlled by the viscosity dissipation $\mu\|\nabla u \|_{L^{2}}^{2}$ due to the $L^{\frac{3}{2}}$-integrability of $\rho$ in vacuum {\rm(}refer to Proposition {\rm\ref{prop1}}{\rm)}. The exponential decay rate also allows us to establish the uniform large-time estimates without assuming additional $L^{1}$ regularity on the initial data as in {\rm\cite{HanK}}. Furthermore, our  result presents a new comparison to the spatial periodic case {\rm\cite{Choi2015}}.

\item For any given $\kappa\geq 1$, the mild condition \eqref{inenrgcd} is satisfied, provided that either the initial energy $E_{0}$ is suitable small or the viscosity coefficient $\mu$ is large enough.

\item For the global existence and uniqueness in Theorem {\rm\ref{main1}}, the compact support condition $\eqref{indacd}_{4}$ can be relaxed to $(1+|v|^{q})f_{0}\in L^{\infty}_{x,v}$ for any $q>5$.

\item It is interesting to extend the linear drag force term $F^{0}_{d}=\kappa (u-v)$ to the general form
\begin{equation*}
F_{d}^{\alpha}=\kappa \rho^{\alpha}(u-v)~~ \text{with}~~ \alpha>0.
\end{equation*}
The main difficulty is that, in order to ensure the validity of the change of velocity variable for $n_{f}$ as in Lemma {\rm\ref{lem2}}, one must require the lower bound and Lipschitz regularity of the fluid density $\rho$. This is a work in progress.
\end{itemize}
\end{remark}

\newpage

We further investigate the global existence and large-time behaviors of weak solutions to the Cauchy problem \eqref{NSV}--\eqref{inda} with only bounded fluid density and distribution function.

\begin{definition}\label{defwkslt}
For any time $T>0$, $(\rho,u,f)$ is said to be a global weak solution to the Cauchy problem \eqref{NSV}--\eqref{inda} if the following properties hold

\begin{itemize}
\item [{\rm (1)}] $(\rho,u,f)$ satisfies
\begin{equation*}
\left\{
\begin{aligned}
&0\leq \rho \in C([0,T];L^{\frac{3}{2}})\cap L^{\infty}(0,T;L^{\infty}),\\
&\rho u\in C([0,T];H^{-1})\cap L^{\infty}(0,T;L^{2}),~~ u\in L^{2}(0,T;D_{0,\sigma}^{1}),\\
&0\leq f\in C([0,T];L^{1}_{x,v})\cap L^{\infty}(0,T;L^{\infty}_{x,v});\\
\end{aligned}
\right.
\end{equation*}

\item [{\rm (2)}] $(\rho,u,f)$ solves the Cauchy problem \eqref{NSV}--\eqref{inda} in the sense of distributions;

\item [{\rm (3)}] For a.e. $t\in (0,T]$, the energy inequality \eqref{enrgineq} holds.
\end{itemize}
\end{definition}

\begin{theorem}\label{main2}
Let $\mu>0$, $\kappa\geq1$ and $q>5$. Assume that the initial data $(\rho_{0},u_{0},f_{0})$ satisfies
\begin{equation}\label{indacd2}
\left\{
\begin{aligned}
&0\leq \rho_{0}\in L^{\frac{3}{2}}\cap L^{\infty},\\
&\sqrt{\rho_{0}}u_{0}\in L^{2},~~ u_{0}\in D^{1}_{0,\sigma},\\
&0\leq f_{0}\leq L^{1}_{x,v}\cap L^{\infty}_{x,v},~~ |v|^{q}f_{0}\in  L^{\infty}_{x,v}.
\end{aligned}
\right.
\end{equation}
Then, there exists a constant $\varepsilon_{0}^{*}>0$ depending on $M$, $\|\rho_{0}\|_{L^{\frac{3}{2}}\cap L^{\infty}}$, $\|f_{0}\|_{L^{1}_{x,v}}$ and $\|(1+|v|^{2})f_{0}\|_{ L^{1}_{v}(L^{\infty}_{x})}$ such that if
\begin{equation}\label{inenrgcd2}
E_{0}\leq\min\{\mu^\frac{10}{7},\,\mu^{13}\}\kappa^{-16}\varepsilon_{0}^{*},
\end{equation}
the Cauchy problem \eqref{NSV}--\eqref{inda} admits a global weak solution $(\rho,u,f)$ in the sense of Definition {\rm\ref{defwkslt}}. In addition, for any $T>0$, we have
\begin{equation}
\left\{
\begin{aligned}
&u\in L^{\infty}(0,T;D^{1}_{0,\sigma})\cap L^{1}(0,T;W^{1,\infty}),~~ P\in L^{2}(0,T;H^{1}),\\
&\sqrt{t}(\nabla u,P)\in L^{\infty}(0,T;H^{1})\cap L^{2}(0,T;W^{1,6}), \\
&\sqrt{t} \sqrt{\rho}u_{t}\in L^{\infty}(0,T;L^{2})\cap L^{2}(0,T;L^{6}).
\end{aligned}
\right.
\end{equation}
Furthermore, it holds for any $t\in[1,+\infty)$ that
\begin{equation}
\left\{
\begin{aligned}
&E(t)+D(t)+\|\nabla^{2}u(t)\|_{L^{2}}+\|P(t)\|_{H^{1}}\leq C_{2}e^{-\lambda_{2} t},\\
&W_{1}(f(t),n^{*}\otimes\delta_{v})+W_{1}(n_{f}(t),n^{*})\leq C_{2}e^{-\lambda_{2} t},\\
&\|j_{f}(t)\|_{L^{\infty}}+\|e_{f}(t)\|_{L^{\infty}}\leq C_{2}e^{-\lambda_{2} t},\label{r2}
\end{aligned}
\right.
\end{equation}
where $C_{2}>0$ and $\lambda_{2}>0$ are two constants dependent of $\mu$, $\kappa$ but independent of the time $t$, $W_{1}$ represents the $1$-Wasserstein distance {\rm(}refer to Definition {\rm\ref{defwass}}{\rm)}, $n^{*}$ is an asymptotic profile of $n_{f}$, and $\delta_{v}$ denotes the Dirac measure in velocity supported at $\{v=0\}$.
\end{theorem}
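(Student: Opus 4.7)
The plan is to obtain the weak solution as a limit of strong solutions produced by Theorem \ref{main1}, so the argument naturally breaks into a regularization step, a uniform-in-regularization bound step, and a compactness/limit-passage step. I would first approximate $(\rho_0, u_0, f_0)$ by a sequence $(\rho_0^\epsilon, u_0^\epsilon, f_0^\epsilon)$ satisfying the stronger hypotheses \eqref{indacd}: mollify $\rho_0$ to gain $H^1$-regularity while preserving $\|\rho_0^\epsilon\|_{L^{3/2} \cap L^\infty} \le \|\rho_0\|_{L^{3/2} \cap L^\infty}$; mollify $u_0$ and project with Helmholtz--Leray to keep it in $D^1_{0,\sigma}$; and for $f_0$, truncate in $v$ on $\{|v| \le R_\epsilon\}$ with $R_\epsilon \to \infty$ and mollify in $(x,v)$ to land in $W^{1,3/2}_{x,v}$ with compactly supported $v$-marginal. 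The assumption $|v|^q f_0 \in L^\infty_{x,v}$ with $q > 5$ is exactly what is needed so that $\|(1+|v|^2) f_0^\epsilon\|_{L^1_v(L^\infty_x)}$, $\|f_0^\epsilon\|_{L^1_{x,v}}$, $M^\epsilon$, and $E_0^\epsilon$ remain bounded uniformly in $\epsilon$, ensuring that the smallness condition \eqref{inenrgcd} transfers from \eqref{inenrgcd2}.

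Theorem \ref{main1} then yields global strong solutions $(\rho^\epsilon, u^\epsilon, f^\epsilon)$ satisfying the uniform bounds \eqref{result1}--\eqref{result3}, with constants $C_1$, $\lambda_1$ depending only on $\mu$, $\kappa$ and on the invariant quantities above. To pass to the limit, I would combine weak-$\ast$ compactness from the energy and dissipation inequalities with strong compactness ingredients: an Aubin--Lions argument applied to $\rho^\epsilon_t + {\rm div}_x(\rho^\epsilon u^\epsilon) = 0$ using $u^\epsilon \in L^\infty(0,T; D^1_{0,\sigma})$ to obtain $\rho^\epsilon \to \rho$ strongly in $C([0,T]; L^p)$ for $p < \infty$; compactness of $\sqrt{\rho^\epsilon} u^\epsilon$ and $\nabla u^\epsilon$ on $[\tau, T]$ for any $\tau > 0$ via the weighted estimate $\sqrt{t}\sqrt{\rho^\epsilon} u^\epsilon_t \in L^\infty(0,T; L^2)$; and strong compactness of $f^\epsilon$ in $C([0,T]; L^1_{x,v})$ via a velocity-averaging argument applied to the Vlasov equation with transport field $\kappa(u^\epsilon - v)$. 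The uniform moment bound $\|(1+|v|^q) f^\epsilon\|_{L^\infty_{x,v}}$, propagated along the Vlasov characteristics because $u^\epsilon$ is bounded, supplies the equi-integrability needed to pass to the limit in the drag coupling $\int_{\mathbb{R}^3} \kappa(u^\epsilon - v) f^\epsilon \, {\rm d}v$ as well as in $\rho^\epsilon u^\epsilon \otimes u^\epsilon$.

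The large-time estimates in \eqref{r2} transfer through the limit automatically: lower semicontinuity of $E(t)$, $D(t)$, and the $H^2$ and $H^1$ norms under weak convergence, together with lower semicontinuity of the $W_1$-distance under weak-$\ast$ convergence of probability measures with uniform second moments, preserve the exponential decay from \eqref{result3} in the limit. The asymptotic profile $n^*$ is obtained as the weak-$\ast$ limit of $n_{f^\epsilon}(t,\cdot)$ as $t \to \infty$, which exists thanks to the Cauchy property in $W_1$ inherited from the exponential concentration of the Vlasov distribution towards a Dirac in $v$, and is then identified with the analogous limit for $n_f$ via the compactness established in the previous step.

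The principal obstacle will be the simultaneous loss of regularity of $\rho_0$ (no $H^1$) and of $f_0$ (no Sobolev derivatives, no compact support). Without $\nabla \rho \in L^\infty_t L^2$ one loses the usual elliptic gain that underlies the strong theory, so the compactness of $\nabla u^\epsilon$ must be extracted solely from the degenerate-time weighted bounds, which is delicate uniformly near $t = 0$ and requires one to accept that the limit solution is only weak. Simultaneously, the Vlasov compactness must be controlled by moment bounds rather than by Sobolev regularity in $(x,v)$, and this is precisely where the threshold $q > 5$ becomes indispensable, since a lower moment would not suffice either to close the $L^\infty$-decay estimates on $j_f$ and $e_f$ or to justify the equi-integrability in the drag term after passage to the limit.
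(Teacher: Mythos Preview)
Your overall strategy—regularize $(\rho_0,u_0,f_0)$ so as to satisfy \eqref{indacd}, invoke Theorem~\ref{main1}, then pass to the limit using uniform-in-$\varepsilon$ a~priori bounds—is exactly what the paper does, and your identification of $q>5$ as the threshold making $\|(1+|v|^2)f_0^\varepsilon\|_{L^1_v(L^\infty_x)}$ uniformly bounded is correct.

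Where your compactness step overstates matters: Aubin--Lions applied to the continuity equation will \emph{not} give $\rho^\varepsilon\to\rho$ strongly in $C([0,T];L^p)$, because you have no uniform spatial regularity on $\rho^\varepsilon$ (only $L^{3/2}\cap L^\infty$; the $H^1$ bound from Lemma~\ref{lem8} depends on $\|\nabla\rho_0^\varepsilon\|_{L^2}$, which blows up as $\varepsilon\to 0$). Likewise, velocity averaging yields compactness of \emph{moments} $\int f^\varepsilon\psi(v)\,{\rm d}v$, not of $f^\varepsilon$ itself in $C([0,T];L^1_{x,v})$. The paper sidesteps both issues by settling for weak-$\ast$ limits of $\rho^\varepsilon$ and $f^\varepsilon$, and concentrating Aubin--Lions on $u^\varepsilon$ alone (using $\nabla u^\varepsilon\in L^\infty_t L^2$, $\nabla^2 u^\varepsilon\in L^2_t L^2$, and the time-weighted bounds on $\nabla u_t^\varepsilon$ from Lemmas~\ref{lem4}--\ref{lem5}) to get $u^\varepsilon\to u$ strongly in $L^2(0,T;H^1_{\rm loc})$. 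That single strong convergence, together with the weak-$\ast$ limits of $\rho^\varepsilon$ and $f^\varepsilon$, already suffices to pass to the limit in every nonlinear term ($\rho u\otimes u$, $\kappa u\cdot\nabla_v f$, and the drag force). The time-continuity $\rho\in C([0,T];L^{3/2})$ and $f\in C([0,T];L^1_{x,v})$ is recovered \emph{after} the limit via DiPerna--Lions renormalization, not through strong compactness of the approximating sequence. Your architecture is sound once these compactness mechanisms are corrected.
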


We would like to mention that there is a considerable amount of recent improvements on the well-posedness and large-time description of solutions to the inhomogeneous Navier-Stokes equations (i.e., the distribution function $f=0$ in \eqref{NSV}).  Ka\v{z}ihov \cite{Kazihov} showed that, in the absence of vacuum, at least a global weak solution exists in the energy space. The no vacuum assumption was later removed by Simon \cite{Simon}. Lions \cite{Lions} introduced the renormalized solutions and extended previous results to the case of density-dependent viscosity. For smooth initial data, the Cauchy problem or the initial boundary value problem admits a unique solution locally in time or globally in time when the initial velocity is small enough, cf. \cite{Ladyzenskaja,Cho2004,Paicu2013}. The reader also refers to \cite{Danchin,Abidi,Abidi2012,Abidi2007,Danchin2012,Zhang,Danchin2023} concerning the advances in the critical regularity settings. When the initial density is allowed to vanish, Craig et al. \cite{Craig2013} established the global well-posedness of the Cauchy problem in three dimensions for regular initial data with the suitable small $\dot{H}^{\frac{1}{2}}$-norm of $u_{0}$. Danchin and Mucha \cite{Danchin2019} addressed the global existence and uniqueness issue for the only bounded density and provided an answer to Lions' question in \cite{Lions} concerning the evolution of a drop in vacuum. In addition, He et al. \cite{He2021} proved an exponential decay result for classical solutions with vacuum in the three-dimensional whole space.

\vspace{2mm}

We explain the main strategies and ideas to prove Theorems \ref{main1} and \ref{main2} about the global existence and large-time behaviors of solutions to the Cauchy problem \eqref{NSV}--\eqref{inda}. To begin with, we prove the coercive estimate
\begin{equation}\label{invsenrg}
D(t)\geq\lambda E(t)~~ \text{with}~~ \lambda\sim \frac{\min\{\mu,\kappa\}}{1+\displaystyle\sup_{t>0}\|(\rho,n_{f})(t)\|_{L^\frac{3}{2}}}.
\end{equation}
By \eqref{invsenrg} and the energy inequality \eqref{enrgineq}, we obtain the exponential stability of the basic energy $E(t)$ (refer to Proposition \ref{prop1}).
Since the transport nature in $\eqref{NSV}_{1}$ with vacuum leads to the uniform evolution of the $L^{\frac{3}{2}}$-norm for the fluid density $\rho$, our key ingredient is to show the uniform bound
\begin{equation}\label{nfl1h}
\sup_{t>0}\|n_{f}(t)\|_{L^\frac{3}{2}}<\infty.
\end{equation}
To achieve it, we rewrite the Vlasov equation $\eqref{NSV}_{4}$ as
\begin{equation}\label{dstrbt}
f(t,x,v)=e^{3\kappa t}f_{0}(X(0;t,x,v),V(0;t,x,v)),
\end{equation}
where the characteristic curves $X(\tau;t,x,v)$ and $V(\tau;t,x,v)$ are solved by
\begin{equation}\label{chrcrv}
\left\{
\begin{aligned}
&\frac{{\rm{d}}}{{\rm{d}}\tau}X(\tau;t,x,v)=V(\tau;t,x,v),\\
&\frac{{\rm{d}}}{{\rm{d}}\tau}V(\tau;t,x,v)=\kappa(u(\tau,X(\tau;t,x,v))-V(\tau;t,x,v)),\\
&X(t;t,x,v)=x,~~ V(t;t,x,v)=v.
\end{aligned}
\right.
\end{equation}
Note that the formula \eqref{dstrbt} implies the time growth of the distribution function $f$, so one may not obtain \eqref{nfl1h} directly. Since the $L^{1}$-norm of $n_{f}$ is conserved, it suffices to obtain the uniform upper bound of $n_{f}$. As in  \cite{HanKwan2020,Ertzbischoff2021,HanK}, we need to derive  the  upper bound $2e^{-3\kappa t}$ of the Jacobian matrix of the map $\Gamma^{-1}_{t,x}: V(0;t,x,v)\rightarrow v$ (refer to Lemma \ref{lem2}) such that
\begin{equation*}
n_{f}(t,x) =\int_{\mathbb{R}^{3}} e^{3\kappa t}f_{0}(X(0;t,x,v),V(0;t,x,v)) {\rm{d}}v\leq 2 \|f_{0}\|_{L^{1}_{v}(L^{\infty}_{x})},
\end{equation*}
as long as the following uniform Lipschitz regularity of $u$ holds
\begin{equation}\label{nblul1inft}
\int_{0}^{t}\kappa\|\nabla u(\tau)\|_{L^{\infty}}{\rm{d}}\tau\ll 1.
\end{equation}

However, it is difficult to enclose the a-priori estimate \eqref{nblul1inft} since the techniques of maximal regularity estimates used in \cite{HanKwan2020,Ertzbischoff2021,HanK} can not be applied to the momentum equations $\eqref{NSV}_{2}$ containing the degenerate term $\rho u_{t}$ near the vacuum. To overcome this difficulty, we construct some new functional inequalities and established time-weighted estimates for higher order norms of $u$. This method is inspired by the works \cite{Danchin2019,He2021} for the pure inhomogeneous Navier-Stokes equations and allows us to capture some additional dissipation structures due to fluid-particle interactions. To be more specific, given the basic energy inequality \eqref{enrgineq}, we improve the estimates of $D(t)$ in Lemma \ref{lem4} and obtain a new functional inequality
\begin{equation}\label{dsptineq}
\frac{{\rm{d}}}{{\rm{d}}t} D(t)+\mathcal{E}(t)\leq g_{1}(t)D(t),
\end{equation}
where $g_{1}(t)$ is uniformly integrable, and $\mathcal{E}(t)$ is the high-order dissipation
\begin{equation}\label{dsdspt}
\mathcal{E}(t):=\int_{\mathbb{R}^{3}} \rho|u_{t}|^{2} {\rm{d}}x+\int_{\mathbb{R}^{3}\times\mathbb{R}^{3}}\kappa^{2}|u-v|^{2}f{\rm{d}}v{\rm{d}}x.
\end{equation}
Based on \eqref{dsptineq}, it is natural to enhance the estimate of $\mathcal{E}(t)$.  In  Lemma \ref{lem5}, we further obtain another new functional inequality
\begin{equation}\label{dsdsptineq}
\frac{{\rm{d}}}{{\rm{d}}t} \mathcal{E}(t)+\int_{\mathbb{R}^{3}}\mu|\nabla u_{t}|^{2}{\rm{d}}x+\int_{\mathbb{R}^{3}}\kappa n_{f}|u_{t}|^{2}{\rm{d}}x\leq g_{2}(t) \mathcal{E}(t)+F(t),
\end{equation}
for some uniformly integrable functions $g_{2}(t)$ and $F(t)$. In particular, the new inequalities \eqref{dsptineq} and \eqref{dsdsptineq}, together with the Stokes structure in $\eqref{NSV}_{2}$, implies the higher order estimates of $(u,P)$. With the aid of careful initial layer analysis on \eqref{dsptineq} and \eqref{dsdsptineq}, we obtain the short-time integration part of \eqref{nblul1inft}. Meanwhile, the large-time integration part of \eqref{nblul1inft} relies on the improvement of the exponential time-decay rate for $E(t)$ (refer to Lemma \ref{lem3}) to both $D(t)$ and $\mathcal{E}(t)$. Based on these uniform estimates and a bootstrap argument, we enclose \eqref{nblul1inft} and thence deduce the higher order regularities of $(\rho,u,f)$ and the decay estimates \eqref{result3}.

\vspace{2mm}

The rest of this paper is organized as follows. In Section \ref{secclb}, we devote to the exponential time-decay rate of the energy $E(t)$ under the additional bound \eqref{nfl1h} of $n_{f}$. In Section \ref{secape}, we justify the uniform bound \eqref{nfl1h}, enclose the a-priori Lipschitz regularity \eqref{nblul1inft} of $u$ and further obtain the higher order estimates and large-time behaviors of the solution. The main results, Theorems \ref{main1} and \ref{main2}, are proved in Section \ref{secprf}. Some definitions and lemmas are presented in \nameref{appendix}.

\section{Conditional large-time behavior}\label{secclb}

In this section, we prove the exponential stability of solutions to the Cauchy problem \eqref{NSV}--\eqref{inda} under the additional condition that the $L^{\frac{3}{2}}$-norm of the macroscopical density $n_{f}$ is uniformly bounded in time. Compared with the use of the Poincar\'{e} inequality in the spatial periodic case \cite{Choi2015}, our proof relies on the integrability of densities with vacuum.

\begin{proposition}\label{prop1}
For given time $T>0$, let $(\rho,u,f)$ be a strong solution to the Cauchy problem \eqref{NSV}--\eqref{inda} for $t\in(0,T]$. Suppose that there exist some constant $C_{*}>0$ independent of $T$ such that
\begin{equation}\label{nf32}
\sup_{t\in(0,T]}\|n_{f}(t)\|_{L^{\frac{3}{2}}}\leq C_{*}.
\end{equation}
Then it holds for $t\in(0,T]$ that
\begin{equation}\label{exp1}
E(t)\leq e^{-2\alpha t}E_{0}
\end{equation}
with the constant
\begin{equation}\label{alp}
\alpha:=\frac{\min\{\mu,\kappa\}}{2+C_{S}(\|\rho_{0}\|_{L^\frac{3}{2}}+2C_{*})}>0,
\end{equation}
where $C_{S}>0$ denotes the Sobolev constant such that $\|\cdot\|_{L^{6}}^{2}\leq C_{S}\|\nabla\cdot\|_{L^{2}}^{2}$ holds.
\end{proposition}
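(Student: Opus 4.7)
The plan is to establish the coercive estimate $D(t) \geq 2\alpha E(t)$ with $\alpha$ as in \eqref{alp} and then conclude by Gronwall applied to \eqref{enrgineq}. In the periodic setting of \cite{Choi2015} the analogous step is handled by the Poincar\'e inequality, but here the unbounded domain $\mathbb{R}^3$ precludes such a bound. Instead, I will exploit the $L^{3/2}$ integrability of both macroscopic densities, together with the Sobolev embedding $D^1_{0,\sigma} \hookrightarrow L^6$, to absorb the fluid and particle kinetic energies into $\mu \|\nabla u\|_{L^2}^2$ and $\kappa \int |u-v|^2 f$ respectively.

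First, since $\mathrm{div}\, u = 0$, the continuity equation $\rho_t + \mathrm{div}(\rho u) = 0$ is a divergence-free transport, so by a standard DiPerna--Lions-type renormalization argument at the regularity \eqref{result1}, every $L^p$ norm of $\rho$ is preserved; in particular $\|\rho(t)\|_{L^{3/2}} = \|\rho_0\|_{L^{3/2}}$. H\"older and Sobolev then give
\begin{equation*}
\int_{\mathbb{R}^3} \rho |u|^2 \, dx \leq \|\rho_0\|_{L^{3/2}} \|u\|_{L^6}^2 \leq C_S \|\rho_0\|_{L^{3/2}} \|\nabla u\|_{L^2}^2.
\end{equation*}
Second, I would use the pointwise inequality $|v|^2 \leq 2|u-v|^2 + 2|u|^2$ together with the hypothesis \eqref{nf32}, H\"older and Sobolev to obtain
\begin{equation*}
\int_{\mathbb{R}^3 \times \mathbb{R}^3} |v|^2 f \, dv \, dx \leq 2 \int_{\mathbb{R}^3 \times \mathbb{R}^3} |u-v|^2 f \, dv \, dx + 2 C_S C_* \|\nabla u\|_{L^2}^2.
\end{equation*}
Summing the two bounds gives the key estimate
\begin{equation*}
2 E(t) \leq C_S(\|\rho_0\|_{L^{3/2}} + 2 C_*) \|\nabla u\|_{L^2}^2 + 2 \int_{\mathbb{R}^3 \times \mathbb{R}^3} |u-v|^2 f \, dv \, dx.
\end{equation*}

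Finally, I would read off the coercive inequality by noting that $D(t) - 2\alpha E(t)$ has coefficient $\mu - \alpha C_S(\|\rho_0\|_{L^{3/2}} + 2 C_*)$ in front of $\|\nabla u\|_{L^2}^2$ and coefficient $\kappa - 2\alpha$ in front of $\int |u-v|^2 f$, both of which are nonnegative for the value of $\alpha$ in \eqref{alp} (since such $\alpha$ satisfies $\alpha \leq \min\{\mu,\kappa\}/2 \leq \kappa/2$ and $\alpha \leq \min\{\mu,\kappa\}/(C_S(\|\rho_0\|_{L^{3/2}} + 2 C_*)) \leq \mu/(C_S(\|\rho_0\|_{L^{3/2}} + 2 C_*))$, the denominator $2 + C_S(\|\rho_0\|_{L^{3/2}} + 2C_*)$ being chosen precisely to dominate both $2$ and $C_S(\|\rho_0\|_{L^{3/2}} + 2C_*)$). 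Combining $D(t) \geq 2\alpha E(t)$ with \eqref{enrgineq} gives $\frac{d}{dt} E(t) + 2\alpha E(t) \leq 0$, and Gronwall yields \eqref{exp1}. There is no real analytic obstacle; the only subtle point is the $L^{3/2}$ transport conservation of $\rho$, and everything else is a careful accounting of constants to match the form \eqref{alp}.
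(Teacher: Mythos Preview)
Your proposal is correct and follows essentially the same route as the paper: conservation of $\|\rho(t)\|_{L^{3/2}}$, the Sobolev--H\"older bound on $\int \rho|u|^2$ and $\int n_f|u|^2$, the splitting $|v|^2\leq 2|u-v|^2+2|u|^2$, and Gr\"onwall applied to \eqref{enrgineq}. The only cosmetic difference is that the paper packages the coefficient check into a single line $E(t)\leq (2\alpha)^{-1}D(t)$, whereas you verify the two coefficient inequalities separately.
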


\begin{proof}
In view of \eqref{NSV}, one can obtain the energy inequality \eqref{enrgineq} for any $t\in(0,T]$. We claim that
\begin{equation}\label{core}
\int_{\mathbb{R}^{3}}\mu|\nabla u|^{2}{\rm{d}}x+\int_{\mathbb{R}^{3}\times\mathbb{R}^{3}}\kappa|u-v|^{2}f{\rm{d}}v{\rm{d}}x\geq 2\alpha E(t)
\end{equation}
with $\alpha$ given in \eqref{alp}. Based on \eqref{core}, we are able to employ the Gr\"{o}nwall inequality to \eqref{enrgineq} and then derive the exponential stability \eqref{exp1}.

Indeed, by the incompressible condition $\eqref{NSV}_{3}$ and the transport nature of $\eqref{NSV}_{1}$, a direct computation (cf. \cite{Lions}) deduces for $t\in(0,T]$ that
\begin{equation}\label{esro}
\|\rho\|_{L^{\frac{3}{2}}}=\|\rho_{0}\|_{L^{\frac{3}{2}}},
\end{equation}
which together with the Sobolev inequality yields
\begin{equation}\label{esro1}
\int_{\mathbb{R}^{3}}\rho|u|^{2}{\rm{d}}x\leq  \|\rho\|_{L^{\frac{3}{2}}}\|u\|_{L^{6}}^{2}\leq C_{S}\|\rho_{0}\|_{L^{\frac{3}{2}}}\|\nabla u\|_{L^{2}}^{2}.
\end{equation}
Similarly, one has
\begin{equation}\begin{aligned}
\int_{\mathbb{R}^{3}\times\mathbb{R}^{3}}|u|^{2}f  {\rm{d}}v{\rm{d}}x\leq C_{S}\|n_{f}\|_{L^\frac{3}{2}}\|\nabla u\|_{L^{2}}^{2}.\label{esro2}
\end{aligned}\end{equation}
Therefore, by \eqref{esro}--\eqref{esro2} and $\kappa\geq1$, we obtain
\begin{equation*}
\begin{aligned}
E(t)&\leq\frac{1}{2}\int_{\mathbb{R}^{3}}\rho|u|^{2}{\rm{d}}x+\int_{\mathbb{R}^{3}\times\mathbb{R}^{3}}(|u|^{2}+|u-v|^{2})f  {\rm{d}}v{\rm{d}}x\\
&\leq(2\alpha)^{-1}\left(\int_{\mathbb{R}^{3}}\mu|\nabla u|^{2}  {\rm{d}}x+\int_{\mathbb{R}^{3}\times\mathbb{R}^{3}}\kappa|u-v|^{2}f {\rm{d}}v{\rm{d}}x\right).
\end{aligned}
\end{equation*}
This gives rise to \eqref{core}, and thus the proof is completed.
\end{proof}

\section{Uniform a-priori estimates}\label{secape}

To begin with, we state the local existence and uniqueness of strong solutions to the Cauchy problem \eqref{NSV}--\eqref{inda} with vacuum.

\begin{proposition}\label{prop2}
Suppose that the initial data $(\rho_{0},u_{0},f_{0})$ satisfies \eqref{indacd}, then there exists a small time $T_{0}>0$ such that the Cauchy problem \eqref{NSV}--\eqref{inda} admits a unique strong solution $(\rho,u,f)$ on $(0,T_{0}]\times\mathbb{R}^{3}\times\mathbb{R}^{3}$ fulfilling \eqref{result1}.
\end{proposition}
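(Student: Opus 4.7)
The plan is to construct the local strong solution via a linearized iteration combined with a vanishing-vacuum regularization, in the spirit of the local-in-time theory for the inhomogeneous Navier-Stokes equations with vacuum developed by Cho-Kim, Choe-Kim and Danchin-Mucha, adapted to the Vlasov coupling. First, I would regularize the initial density by $\rho_{0}^{\epsilon}:=\rho_{0}\ast\eta_{\epsilon}+\epsilon$ with $\epsilon>0$ small, so that the vacuum is eliminated at $t=0$ while all the bounds in \eqref{indacd} are preserved up to uniform constants. I would then introduce a coupling iteration $(\rho^{n},u^{n},f^{n})_{n\geq 0}$ starting from a smooth extension of the initial data, and at each stage solve sequentially: (i) the transport equation $\partial_{t}\rho^{n+1}+u^{n}\cdot\nabla\rho^{n+1}=0$ by the method of characteristics using the Lipschitz regularity of $u^{n}$; (ii) the Vlasov equation for $f^{n+1}$ via the characteristic formula \eqref{dstrbt}--\eqref{chrcrv}, which simultaneously propagates the velocity-support property; and (iii) the linear Stokes-type problem for $u^{n+1}$,
\begin{equation*}
\rho^{n+1}\partial_{t}u^{n+1}+\rho^{n+1}u^{n}\cdot\nabla u^{n+1}-\mu\Delta u^{n+1}+\nabla P^{n+1}+\kappa n_{f^{n+1}} u^{n+1}=\kappa j_{f^{n+1}},\quad \mathrm{div}\,u^{n+1}=0,
\end{equation*}
by the $L^{2}$-theory for Stokes systems with bounded density, treating the particle drag as a zeroth-order perturbation.

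Next, I would establish uniform a-priori bounds on a short interval $(0,T_{0}]$ independent of $n$ and $\epsilon$. The density bounds in $L^{3/2}\cap L^{\infty}\cap H^{1}$ follow directly from the transport structure, while the $L^{1}_{x,v}\cap L^{\infty}_{x,v}\cap W^{1,3/2}_{x,v}$ bounds for $f^{n+1}$, together with the $v$-support localization, follow from the characteristic representation, conditional on an a-priori control of $\int_{0}^{T_{0}}\|\nabla u^{n}\|_{L^{\infty}}\,d\tau$. For the velocity, I would test the momentum equation successively against $u^{n+1}$ and $u^{n+1}_{t}$ to obtain $\|\nabla u^{n+1}\|_{L^{\infty}(L^{2})}$ together with $\|\sqrt{\rho^{n+1}}u^{n+1}_{t}\|_{L^{2}(L^{2})}$, and then apply the Stokes regularity estimate $\|\nabla^{2}u\|_{L^{2}}+\|\nabla P\|_{L^{2}}\lesssim \|\rho u_{t}\|_{L^{2}}+\|\rho u\cdot\nabla u\|_{L^{2}}+\|\kappa(n_{f}u-j_{f})\|_{L^{2}}$ to recover $(\nabla^{2}u,\nabla P)\in L^{2}(L^{2})$. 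The time-weighted higher-order estimates in \eqref{result1} are obtained by differentiating the momentum equation in time, testing against $u^{n+1}_{t}$ weighted by $t$, and invoking the Stokes theory at the $W^{2,6}$ level.

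Third, I would show that the iterates form a Cauchy sequence in a lower-regularity norm. For the differences $(\delta\rho,\delta u,\delta f)$ between consecutive iterates, a Gronwall-type estimate combining the transport and Vlasov estimates with a weighted energy estimate for $\delta u$ yields contraction on $(0,T_{0}']$ for $T_{0}'\leq T_{0}$ sufficiently small, and the limit $(\rho^{\epsilon},u^{\epsilon},f^{\epsilon})$ solves the $\epsilon$-regularized problem. Passing $\epsilon\to 0$ by weak-$\ast$ compactness together with the Aubin-Lions lemma applied to $u^{\epsilon}$, $\rho^{\epsilon}u^{\epsilon}$ and $f^{\epsilon}$ delivers a strong solution of the original Cauchy problem with the full regularity \eqref{result1}. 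Uniqueness is obtained from a standard energy estimate on the difference of two solutions, using the bounds above to close a Gronwall argument.

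The main obstacle will be carrying out the higher-order velocity estimates without imposing a compatibility condition at $t=0$: since $\sqrt{\rho_{0}}\,u_{t}|_{t=0}$ is not a priori well-defined, the $L^{\infty}(H^{1})$ and $L^{2}(W^{1,6})$ control of $(\nabla u,P)$ can only be obtained after multiplication by $\sqrt{t}$, and the initial-layer singularity in the estimate for $\partial_{t}u$ must be absorbed into the time weight together with the new drag-induced contribution $\kappa n_{f}u_{t}$, while ensuring that no constant depending on $\epsilon$ appears in the final bounds.
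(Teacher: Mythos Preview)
Your proposal is correct and follows essentially the same approach the paper indicates: the paper does not give a proof but simply states that the construction proceeds by convergence of approximated solutions away from vacuum as in \cite{Baranger2006,Lu2019,Cho2004,Choi2022} and omits the details, while the uniqueness is deferred to Section~\ref{secprf}, where it is established by exactly the kind of difference-energy/Gr\"onwall argument you sketch (using $\|\tilde\rho\|_{L^{3/2}}$, $\|\tilde f\|_{L^{6/5}_{x,v}}$ via the $W^{1,3/2}_{x,v}$ regularity and $v$-support of $f$, and a weighted energy estimate for $\tilde u$).
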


The construction of the local-in-time solution is based on the convergence of approximated solutions away from vacuum as in \cite{Baranger2006,Lu2019,Cho2004,Choi2022}, and here we omit the details for brevity. The proof of the uniqueness is postponed to Section \ref{secprf}.

\subsection{The bootstrap}

To extend the local solution globally in time, we establish the following uniform a-priori bound.

\begin{proposition}\label{prop3}
There exists a constant $\varepsilon_{0}\in(0,1)$ depending only on $\|\rho_{0}\|_{L^\frac{3}{2}\cap L^{\infty}}$, $\|f_{0}\|_{L^{1}_{x,v}}$, $\|(1+|v|^{2})f_{0}\|_{L_{v}^{1}(L_{x}^{\infty})}$ and $M$ such that if \eqref{inenrgcd} holds, and $(\rho,u,f)$ is a smooth solution to \eqref{NSV} satisfying
\begin{equation}\label{priori}
\int_{0}^T(\mu^{-3}\|\nabla u(t)\|_{L^{2}}^{4}+\kappa\|u(t)\|_{L^{\infty}}+20\kappa\|\nabla u(t)\|_{L^{\infty}}) {\rm{d}}t\leq 1,
\end{equation}
then it holds the following estimate that
\begin{equation}\label{clspri}
\int_{0}^T(\mu^{-3}\|\nabla u(t)\|_{L^{2}}^{4}+\kappa\|u(t)\|_{L^{\infty}}+20\kappa\|\nabla u(t)\|_{L^{\infty}}){\rm{d}}t\leq \frac{1}{2}.
\end{equation}
\end{proposition}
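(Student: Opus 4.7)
The plan is to run the program sketched in the paragraphs following Theorem~\ref{main2}: convert the hypothesis \eqref{priori} into a Jacobian estimate for the Vlasov characteristics, upgrade this to a uniform bound on $n_f$, invoke Proposition~\ref{prop1} for exponential decay of $E(t)$, and finally feed this into the higher--order functional inequalities for $D(t)$ and $\mathcal{E}(t)$ to extract a Lipschitz bound on $u$ with a strictly smaller constant.

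\textbf{Step 1 (uniform bound on $n_f$).} From \eqref{priori} we have $\int_0^T\kappa\|\nabla u\|_{L^\infty}\,dt\le 1/20\ll 1$, which is exactly the smallness assumed in \eqref{nblul1inft}. Following Lemma~\ref{lem2}, the Jacobian of the map $\Gamma_{t,x}^{-1}\colon V(0;t,x,v)\mapsto v$ is bounded by $2e^{-3\kappa t}$, so \eqref{dstrbt} integrated in $v$ yields $n_f(t,x)\le 2\|f_0\|_{L^1_v(L^\infty_x)}$. Combined with the conservation $\|n_f(t)\|_{L^1}=\|f_0\|_{L^1_{x,v}}$, interpolation gives $\sup_{t\in(0,T]}\|n_f(t)\|_{L^{3/2}}\le C_*$ with $C_*$ depending only on the data. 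Proposition~\ref{prop1} then yields $E(t)\le E_0 e^{-2\alpha t}$ with $\alpha\simeq \min\{\mu,\kappa\}/(1+C_*+\|\rho_0\|_{L^{3/2}})$.

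\textbf{Step 2 (functional inequalities and Stokes structure).} Testing $\eqref{NSV}_2$ against $u_t$ and using $\eqref{NSV}_3$ and $\eqref{NSV}_4$ produces an identity of the form $\frac{d}{dt}D(t)+\mathcal{E}(t)\le g_1(t)D(t)$, where $g_1$ is composed of $\mu^{-3}\|\nabla u\|_{L^2}^4$ together with terms controllable by $\kappa\|u\|_{L^\infty}$ and $\kappa\|\nabla u\|_{L^\infty}$, so $g_1\in L^1(0,T)$ with norm $\le 1$ by \eqref{priori}. Differentiating $\eqref{NSV}_2$ in $t$ and testing against $u_t$, then using the uniform bound on $n_f$ to handle the drag term, gives the second functional inequality $\frac{d}{dt}\mathcal{E}(t)+\mu\|\nabla u_t\|_{L^2}^2+\kappa\int_{\mathbb{R}^3}n_f|u_t|^2\,dx\le g_2(t)\mathcal{E}(t)+F(t)$, with $g_2$ integrable and $F$ exponentially small by Step~1. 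Viewing $\eqref{NSV}_2$ as a stationary Stokes system $-\mu\Delta u+\nabla P=-\rho u_t-\rho u\cdot\nabla u-\int \kappa(u-v)f\,dv$, classical $L^2$ and $L^6$ Stokes regularity promotes $\sqrt{\rho}u_t\in L^2$ and $\nabla u_t\in L^2$ bounds into $\|\nabla^2 u\|_{L^2\cap L^6}+\|P\|_{H^1\cap W^{1,6}}$ estimates, from which Sobolev embedding delivers $\|\nabla u\|_{L^\infty}$.

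\textbf{Step 3 (closing the bootstrap).} Integrate the two inequalities with a $\sqrt{t}$ weight on $(0,1)$ and unweighted on $(1,T)$. On the tail, the exponential decay of $E$ (hence of $D$, via the Stokes-based bootstrap) and of $F$ yields exponential decay of $\mathcal{E}$, and therefore of $\|\nabla^2 u\|_{L^2}$, $\|\nabla u\|_{L^\infty}$, and $\|u\|_{L^\infty}$, with tail contributions to \eqref{clspri} bounded by $C\,\kappa^a\mu^{-b}E_0^\theta$ for positive $\theta$. On the initial layer, the $\sqrt{t}$-weighted bounds of \eqref{result1} yield $\int_0^1\|\nabla u\|_{L^\infty}\,dt\le C\mu^{-c}E_0^{\theta'}$ after Gagliardo--Nirenberg, and similarly for the $\|u\|_{L^\infty}$ piece; the $\mu^{-3}\|\nabla u\|_{L^2}^4$ piece is handled directly from the energy inequality and Proposition~\ref{prop1}. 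The scaling in \eqref{inenrgcd}, $E_0\le \min\{\mu^{10/7},\mu^{13}\}\kappa^{-16}\varepsilon_0$, is precisely calibrated so that each contribution is smaller than $1/6$ when $\varepsilon_0$ is chosen small depending on the data, giving \eqref{clspri}.

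The main obstacle is the initial layer: because $u_0\in D^1_{0,\sigma}$ only and $\rho_0$ vanishes, $\sqrt{\rho}u_t(0)$ is not a priori in $L^2$, so the estimate for $\mathcal{E}$ must be opened with the $\sqrt{t}$ weight. One must therefore verify that the weighted versions of both functional inequalities still close with $g_1, g_2, F$ integrable, and that the resulting polynomial-in-$\mu,\kappa$ factors line up exactly with the exponents $\tfrac{10}{7}$, $13$, $16$ in \eqref{inenrgcd}; this bookkeeping, together with the need to transfer exponential decay from $E$ through $D$ to $\mathcal{E}$ without losing the powers, is the most delicate part.
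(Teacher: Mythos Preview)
Your proposal is correct and follows essentially the same approach as the paper: bound $n_f$ via the characteristic Jacobian (Lemma~\ref{lem2}), deduce exponential decay of $E$ (Lemma~\ref{lem3}), derive the two functional inequalities for $D$ and $\mathcal{E}$ coupled to Stokes regularity (Lemmas~\ref{lem4}--\ref{lem5}), and then split the time integral into a short-time layer with $t$-weights and a large-time tail with exponential weights so that each of the three pieces of \eqref{clspri} is below $1/6$ under \eqref{inenrgcd}. The only cosmetic difference is that the paper uses $t^{\beta}$ and $t^{1+\beta}$ weights (rather than $\sqrt{t}$) and interpolates over $\beta\in[0,1]$ to match exponents, and the $\mu^{-3}\|\nabla u\|_{L^2}^4$ term is closed using the sup-bound on $D(t)$ from Lemma~\ref{lem4} together with the $L^1$-in-time bound on $D$ from \eqref{eptd}, not from the basic energy inequality alone.
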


Before giving the proof of Proposition \ref{prop3}, let us show some necessary uniform bounds on $(\rho,u,f)$. By standard maximal principles for the mass conservation equation $\eqref{NSV}_{1}$ and the Vlasov equation $\eqref{NSV}_{4}$, one has the following lemma.

\begin{lemma}\label{lem1}
Let $T>0$ and $(\rho,u,f)$ be the smooth solution to \eqref{NSV} for $(t,x,v)\in(0,T]\times\mathbb{R}^{3}\times\mathbb{R}^{3}$ with the initial data $(\rho_{0},u_{0},f_{0})$ satisfying \eqref{indacd}. Then under the condition \eqref{priori}, it holds
\begin{equation}\label{rhoinfty}
0\leq\rho(t,x)\leq\|\rho_{0}\|_{L^{\infty}},~~ 0\leq f(t,x,v)\leq e^{3\kappa T}\|f_{0}\|_{L^{\infty}_{x,v}}.
\end{equation}
\end{lemma}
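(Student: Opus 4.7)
Both bounds are pure method-of-characteristics statements; the hypothesis \eqref{priori} enters only through the Lipschitz regularity $\int_0^T \kappa\|\nabla u(\tau)\|_{L^\infty}\,\mathrm{d}\tau \leq 1/20$, which is what makes the characteristic flows well defined on $[0,T]$.

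For the density, I would use the incompressibility $\mathrm{div}_x u = 0$ to reduce $\eqref{NSV}_1$ to the pure transport equation $\rho_t + u\cdot\nabla_x \rho = 0$. Since $\nabla u \in L^1(0,T;L^\infty)$, the flow $\tau\mapsto X(\tau;t,x)$ solving $\partial_\tau X = u(\tau,X)$ with $X(t;t,x)=x$ is a bi-Lipschitz measure-preserving homeomorphism of $\mathbb{R}^3$, and $\rho(t,x)=\rho_0(X(0;t,x))$. The bounds $0\leq \rho(t,x)\leq \|\rho_0\|_{L^\infty}$ are then immediate from the corresponding bounds on $\rho_0$.

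For the Vlasov equation, expanding the velocity divergence using the identity $\mathrm{div}_v(\kappa(u-v)) = -3\kappa$ rewrites $\eqref{NSV}_4$ as
\[
f_t + v\cdot\nabla_x f + \kappa(u-v)\cdot\nabla_v f = 3\kappa f.
\]
Along the phase-space characteristics $(X,V)$ defined in \eqref{chrcrv}, this becomes the linear ODE $(\mathrm{d}/\mathrm{d}\tau) f(\tau,X,V) = 3\kappa f(\tau,X,V)$, which integrates backward from $\tau=t$ to $\tau=0$ to yield exactly the representation formula \eqref{dstrbt}. Non-negativity of $f$ and the bound $f(t,x,v)\leq e^{3\kappa t}\|f_0\|_{L^\infty_{x,v}}\leq e^{3\kappa T}\|f_0\|_{L^\infty_{x,v}}$ both follow at once, and the factor $e^{3\kappa T}$ is seen to arise solely from the velocity divergence $-3\kappa$ of the drift field.

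The only nontrivial technical point is the global-in-time solvability of \eqref{chrcrv}, but this is routine: the $X$-equation is controlled by $|V|$, while applying Gronwall to the $V$-equation and using $\int_0^T \kappa\|u(\tau)\|_{L^\infty}\,\mathrm{d}\tau \leq 1$ together with the linear friction term $-\kappa V$ precludes finite-time blow-up of $|V|$. I would therefore keep the writeup short, since the substantive content of the lemma is merely the compatibility of the $L^\infty$ maximum principle with the linear transport--kinetic structure once the drift fields satisfy \eqref{priori}. I do not anticipate any real obstacle here; the estimates on $\rho$ and $f$ in \eqref{rhoinfty} will serve as the base $L^\infty$ bounds repeatedly invoked in the subsequent higher-order a-priori analysis.
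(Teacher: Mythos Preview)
Your proposal is correct and is exactly the standard maximum-principle argument the paper has in mind; indeed, the paper does not write out a proof at all, merely stating that \eqref{rhoinfty} follows from ``standard maximal principles for the mass conservation equation $\eqref{NSV}_{1}$ and the Vlasov equation $\eqref{NSV}_{4}$,'' and your derivation via characteristics reproduces the representation formula \eqref{dstrbt} that the paper already records in the introduction.
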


In order to justify the key condition \eqref{nf32}, we establish the uniform estimates on the moments $n_{f}$, $j_{f}$ and $e_{f}$.

\begin{lemma}\label{lem2}
Let $T>0$ and $(\rho,u,f)$ be the smooth solution to \eqref{NSV} for $t\in(0,T]$ with the initial data $(\rho_{0},u_{0},f_{0})$ satisfying \eqref{indacd}. Then under the condition \eqref{priori}, it holds for $p\geq 1$ that
\begin{equation}\label{nf}
\sup_{t\in(0,T]}\|n_{f}(t)\|_{L^p}\leq 2\|f_{0}\|_{L^{1}_{x,v}}^\frac{1}{p}\|f_{0}\|_{L^{1}_{v}(L^{\infty}_{x})}^{1-\frac{1}{p}}.
\end{equation}
Furthermore, one has the following uniform upper bounds
\begin{align}
\label{jf}&\sup_{t\in(0,T]}\|j_{f}(t)\|_{L^{\infty}}\leq 2
\|(1+|v|)f_{0}\|_{L^{1}_{v}(L^{\infty}_{x})},\\
\label{ef}&\sup_{t\in(0,T]}\|e_{f}(t)\|_{L^{\infty}}\leq 2\|(1+|v|^{2})f_{0}\|_{L^{1}_{v}(L^{\infty}_{x})}.
\end{align}
\end{lemma}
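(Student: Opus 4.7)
The plan is to represent the moments through the characteristic formula $f(t,x,v) = e^{3\kappa t} f_0(X(0;t,x,v), V(0;t,x,v))$ from \eqref{dstrbt}, and then change velocity variable to $w = V(0;t,x,v)$ for each fixed $(t,x)$. Every step hinges on the Jacobian lower bound
\begin{equation*}
|\det D_v V(0;t,x,v)| \geq \tfrac{1}{2} e^{3\kappa t}, \quad \text{equivalently}\quad |\det D\Gamma^{-1}_{t,x}(w)| \leq 2 e^{-3\kappa t},
\end{equation*}
which I would establish first.

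To derive this bound I differentiate \eqref{chrcrv} in $v$, setting $A(\tau) := D_v X(\tau)$ and $B(\tau) := D_v V(\tau)$, so that $A' = B$, $B' = \kappa (\nabla_x u)(\tau, X(\tau))\, A - \kappa B$, with $A(t)=0$, $B(t) = I$. The integrating factor $e^{\kappa \tau}$ converts the $B$-equation into $(e^{\kappa \tau} B)' = \kappa e^{\kappa \tau} \nabla_x u \cdot A$, so that
\begin{equation*}
B(0) = e^{\kappa t}(I - R), \qquad R := \int_0^t \kappa e^{\kappa(s-t)} \nabla_x u(s, X(s))\, A(s)\, ds.
\end{equation*}
Combining $|A(\tau)| \leq \int_\tau^t |B(r)|\, dr$ with a Gr\"{o}nwall bootstrap in $\beta(\tau) := e^{-\kappa(t-\tau)} |B(\tau)|$ against the smallness built into \eqref{priori} (specifically $\int_0^T 20\kappa\|\nabla u\|_{L^\infty}\, dt \leq 1$ and $\kappa \geq 1$) yields the uniform operator-norm bound $\|R\|_{\rm op} \leq 1/19$. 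Since $|\det(I - R)| \geq (1 - \|R\|_{\rm op})^3 > 1/2$, the Jacobian estimate follows.

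With the Jacobian bound in hand, \eqref{nf} falls out: changing variables $v \mapsto w$ in $n_f(t,x)$ and dominating $f_0(X(0),w) \leq \|f_0(\cdot, w)\|_{L^\infty_x}$ produces $\|n_f(t)\|_{L^\infty} \leq 2\|f_0\|_{L^1_v(L^\infty_x)}$; interpolating with the $L^1$-conservation $\|n_f(t)\|_{L^1} = \|f_0\|_{L^1_{x,v}}$ (from integrating $\eqref{NSV}_4$ over $(x,v)$) then yields \eqref{nf}. For \eqref{jf}--\eqref{ef}, the same integrating-factor trick applied to the $V$-ODE itself gives $v = e^{-\kappa t}w + \int_0^t \kappa e^{\kappa(s-t)} u(s, X(s))\, ds$, whence $|v| \leq e^{-\kappa t}|w| + 1 \leq 1 + |w|$ by \eqref{priori}; substituting this (resp.\ $|v|^2 \leq 2(1+|w|^2)$ for $e_f$) under the change-of-variable integral delivers the two $L^\infty$ bounds with constant $2$.

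The main obstacle is the Jacobian estimate: the linearization couples $D_v V$ to $D_v X$ through $\nabla u$, so one must show that the exponential structure of the drag in the Duhamel formula for $B$ isolates a remainder $R$ whose size is governed solely by the cumulative norm $\int_0^T \|\nabla u\|_{L^\infty}\, dt$. The hypothesis \eqref{priori} is calibrated precisely to keep this remainder's operator norm strictly below $1$, ensuring that $I - R$ stays invertible with a quantitative lower bound on $|\det(I - R)|$; the remaining arguments are routine change-of-variable bookkeeping.
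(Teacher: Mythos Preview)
Your proposal is correct and follows essentially the same route as the paper: represent $f$ via characteristics, change variables $v\mapsto w=V(0;t,x,v)$, secure the Jacobian lower bound $|\det D_v\Gamma_{t,x}|\geq\tfrac12 e^{3\kappa t}$ from the smallness in \eqref{priori}, then read off the $L^\infty$ bound on $n_f$ and interpolate with $L^1$-conservation; the bounds on $j_f,e_f$ come from the Duhamel formula $v=e^{-\kappa t}w+\int_0^t\kappa e^{\kappa(s-t)}u\,ds$ exactly as you outline. The only notable technical difference is in the Jacobian step: the paper bounds $\|D_vX(\tau)\|_{L^\infty}$ by a crude Gr\"onwall on the full pair $(D_vX,D_vV)$, obtaining $\|e^{-\kappa t}D_v\Gamma_{t,x}-I\|\leq\tfrac1{20}e^{1/20}\leq\tfrac19$ and then invoking Lemma~\ref{IFthm}, whereas your bootstrap on $\beta(\tau)=e^{-\kappa(t-\tau)}|B(\tau)|$ is a bit sharper and bypasses that auxiliary lemma via the direct estimate $|\det(I-R)|\geq(1-\|R\|_{\rm op})^3$; both lead to the same conclusion.
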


\begin{proof}
The proof follows similar arguments as in \cite{Hofer,HanKwan2020,Ertzbischoff}. Let the characteristic curves $X(\tau;t,x,v)$ and $V(\tau;t,x,v)$ be given by \eqref{chrcrv}. Integrating $\eqref{NSV}_{4}$ over $[0,t]\times \mathbb{R}^{3}\times\mathbb{R}^{3}$ for $t\in(0,T]$, we get
\begin{equation}\label{nf1}
\|n_{f}\|_{L^{1}}=\|f_{0}\|_{L^{1}_{x,v}}.
\end{equation}
According to the definition of $n_{f}$ and the formula \eqref{dstrbt} of $f$, it holds
\begin{equation}\label{nfrep}
\begin{aligned}
n_{f}(t,x)&=e^{3\kappa t}\int_{\mathbb{R}^{3}} f_{0}(X(0;t,x,v),V(0;t,x,v)){\rm{d}}v\\
&=e^{3\kappa t}\int_{\mathbb{R}^{3}} f_{0}(X(0;t,x,\Gamma_{t,x}^{-1}(w)),w)\left|{\rm{det}}\, D_{v}\Gamma_{t,x}(\Gamma_{t,x}^{-1}(w))\right|^{-1}{\rm{d}}w
\end{aligned}
\end{equation}
with the mapping $\Gamma_{t,x}:v\mapsto w=V(0;t,x,v)$. To control \eqref{nfrep}, one needs to justify that $\Gamma_{t,x}$ is $C^{1}$-diffeomorphism. Based on \eqref{chrcrv}, one deduces that
\begin{equation}\label{soluv}
e^{\kappa t}v-V(0;t,x,v)=\int_{0}^te^{\kappa \tau}\kappa u(\tau,X(\tau;t,x,v)){\rm{d}}\tau.
\end{equation}
Thence taking the derivative of \eqref{soluv} with respect to $v$, we have
\begin{equation}\label{maomao}
e^{\kappa t}{\rm{Id}}-D_{v}\Gamma_{t,x}(v)=\int_{0}^te^{\kappa\tau}\kappa\nabla u(\tau,X(\tau;t,x,v)D_{v}X(\tau;t,x,v){\rm{d}}\tau,
\end{equation}
This requires a bound of $\|D_{v}X(\tau)\|_{L^{\infty}_{x,v}}$. To achieve it, we differentiate \eqref{chrcrv} with respect to $(x,v)$ to obtain
\begin{equation*}
\frac{{\rm{d}}}{{\rm{d}}\tau} D_{x,v}Z(\tau;t,x,v)=D_{x,v}W(\tau, Z(\tau;t,x,v))\cdot  D_{x,v}Z(\tau;t,x,v),
\end{equation*}
where $Z=(X,V)$ and $W=(V,\kappa(u-V))$. A use of Gr\"onwall's inequality yields
\begin{equation*}
\|D_{x,v}Z(\tau)\|_{L^{\infty}_{x,v}}\leq \| D_{x,v}Z(t)\|_{L^{\infty}_{x,v}} \exp\left(\int^{t}_{\tau}\|D_{x,v}W\|_{L^{\infty}_{x,v}}{\rm{d}}s\right).
\end{equation*}
Since $D_{x,v}Z(t)={\rm Id}$ and $\kappa\geq1$, it implies that for $0\leq\tau\leq t\leq T$,
\begin{equation}\label{Xvinfty}
\|(\nabla_{v}X,\nabla_{v}V)(\tau)\|_{L^{\infty}_{x,v}}\leq \exp{\left(\kappa(t-\tau)+ \kappa\int_{\tau}^{t}\|\nabla u\|_{L^{\infty}}{\rm{d}}s\right)}.
\end{equation}
By \eqref{priori}, \eqref{maomao} and \eqref{Xvinfty}, we get
\begin{equation*}
\begin{aligned}
\|e^{-\kappa t}D_{v}\Gamma_{t,x}(v)-{\rm{Id}}\|_{L^{\infty}_{x,v}}&\leq\int_{0}^te^{-\kappa(t-\tau)}\kappa\|\nabla u\|_{L^{\infty}}\|D_{v}X(\tau;t,x,v)\|_{L^{\infty}_{x,v}}{\rm{d}}\tau\\
&\leq\int_{0}^t\kappa\|\nabla u\|_{L^{\infty}}{\rm{d}}\tau\,\exp{\left(\int_{0}^t\kappa\|\nabla u\|_{L^{\infty}}{\rm{d}}\tau\right)}\\
&\leq \frac{1}{20}e^{\frac{1}{20}}\\&\leq\frac{1}{9}.
\end{aligned}
\end{equation*}
It then follows from Lemma \ref{IFthm} with $\phi=e^{-\kappa t}\Gamma_{t,x}$ that
\begin{equation}
|{\rm{det}}\, D_{v}\Gamma_{t,x}(v)|=e^{3\kappa t} |{\rm{det}}\, \nabla \phi|\geq \frac{1}{2}e^{3\kappa t},\label{diffeomorphism}
\end{equation}
which combined with \eqref{nfrep} yields
\begin{equation}\label{nf2}
\begin{aligned}
n_{f}(t,x)&\leq e^{3\kappa t}\int_{\mathbb{R}^{3}} f_{0}(X(0;t,x,\Gamma_{t,x}^{-1}(w)),w)\cdot 2e^{-3\kappa t}{\rm{d}}w\\
&\leq 2\int_{\mathbb{R}^{3}} f_{0}(X(0;t,x,\Gamma_{t,x}^{-1}(w)),w){\rm{d}}w\\&\leq 2\|f_{0}\|_{L^{1}_{v}(L^{\infty}_{x})}.
\end{aligned}
\end{equation}
Hence,  by using \eqref{nf1}, \eqref{nf2} and H\"{o}lder's inequality, we derive the estimate \eqref{nf} for $n_{f}$.

As for $j_{f}$, due to the formula \eqref{maomao} and the bound \eqref{diffeomorphism}, $j_{f}$ fulfills the following description
\begin{equation}\label{jflinfty}
\begin{aligned}
j_{f}(t,x)&=e^{3\kappa t}\int_{\mathbb{R}^{3}}\Gamma_{t,x}^{-1}(w)f_{0}(X(0;t,x,\Gamma_{t,x}^{-1}(w)),w)\left|{\rm{det}}\, D_{v}\Gamma_{t,x}(\Gamma_{t,x}^{-1}(w))\right|^{-1}{\rm{d}}w\\
&\leq  e^{3\kappa t}\int_{\mathbb{R}^{3}}\left|\Gamma_{t,x}^{-1}(w)\right|f_{0}(X(0;t,x,\Gamma_{t,x}^{-1}(w)),w)\cdot 2e^{-3\kappa t}{\rm{d}}w\\
&\leq 2\int_{\mathbb{R}^{3}} e^{-\kappa t}wf_{0}(X(0;t,x,\Gamma_{t,x}^{-1}(w)),w){\rm{d}}w\\
&\quad +2\int_{0}^t\int_{\mathbb{R}^{3}} e^{-\kappa (t-\tau)}\kappa\|u\|_{L^{\infty}}f_{0}(X(0;t,x,\Gamma_{t,x}^{-1}(w)),w){\rm{d}}w{\rm{d}}\tau ,
\end{aligned}
\end{equation}
and this as well as \eqref{indacd} and \eqref{priori} leads to the estimate \eqref{jf}.

Similarly, it holds for $e_{f}$ that
\begin{equation}\label{eflinfty}
\begin{aligned}
e_{f}(t,x)&=\frac{1}{2}e^{3\kappa t}\int_{\mathbb{R}^{3}}|\Gamma_{t,x}^{-1}(w)|^{2}f_{0}(X(0;t,x,\Gamma_{t,x}^{-1}(w)),w)\left|{\rm{det}}\,D_{v}\Gamma_{t,x}(\Gamma_{t,x}^{-1}(w))\right|^{-1}{\rm{d}}w\\
&\leq \int_{\mathbb{R}^{3}} \left|e^{-\kappa t}w+\int_{0}^t e^{-\kappa (t-\tau)}\kappa u(\tau,X(\tau;t,x,\Gamma_{t,x}^{-1}(w))){\rm{d}}\tau\right|^{2}f_{0}(X(0;t,x,\Gamma_{t,x}^{-1}(w)),w){\rm{d}}w\\
&\leq 2e^{-2\kappa t}\||v|^{2}f_{0}\|_{L^{1}_{v}(L^{\infty}_{x})}+2\left|\int_{0}^t \kappa\|u\|_{L^{\infty}}{\rm{d}}\tau\ \right|^{2}\|f_{0}\|_{L^{1}_{v}(L_{x}^{\infty})}\\
&\leq 2\||v|^{2}f_{0}\|_{L^{1}_{v}(L^{\infty}_{x})}+2\|f_{0}\|_{L^{1}_{v}(L_{x}^{\infty})}.
\end{aligned}\end{equation}
Therefore, we complete the proof of Lemma \ref{lem2}.
\end{proof}

Next, combining Lemma \ref{lem2} with Proposition \ref{prop1}, we have the following exponential stability.

\begin{lemma}\label{lem3}
Let $T>0$ and $(\rho,u,f)$ be the smooth solution to \eqref{NSV} for $t\in(0,T]$ with the initial data $(\rho_{0},u_{0},f_{0})$ satisfying \eqref{indacd}. Then under the condition \eqref{priori}, it holds
\begin{equation}\label{eptd}
\sup_{t\in(0,T]}e^{\alpha_{1} t}E(t)+\frac{1}{2}\int_{0}^Te^{\alpha_{1} t} D(t){\rm{d}}t\leq E_{0}
\end{equation}
with the constant
\begin{equation}\label{alpha1}
\alpha_{1}:=\frac{\min\{\mu,1\}}{2+C_{S}(\|\rho_{0}\|_{L^\frac{3}{2}}+4\|f_{0}\|_{L^{1}_{x,v}}^\frac{2}{3}\|f_{0}\|_{L^{1}_{v}(L^{\infty}_{x})}^{\frac{1}{3}})}>0,
\end{equation}
where $E(t)$, $D(t)$ and $E_{0}$ are given by \eqref{enrg}, \eqref{dspt} and \eqref{inenrg}, respectively, and $C_{S}$ is the Sobolev constant.
\end{lemma}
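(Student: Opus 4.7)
}
The plan is to combine the uniform control on $\|n_{f}\|_{L^{3/2}}$ coming from Lemma \ref{lem2} with the coercive inequality established inside the proof of Proposition \ref{prop1}, and then convert the resulting pointwise exponential decay of $E(t)$ into the weighted bound for both $E$ and $\int D$ by a standard differential-inequality argument.

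First I would apply Lemma \ref{lem2} with $p=3/2$ to produce the explicit uniform bound
\begin{equation*}
\sup_{t\in(0,T]}\|n_{f}(t)\|_{L^{3/2}}\leq 2\|f_{0}\|_{L^{1}_{x,v}}^{2/3}\|f_{0}\|_{L^{1}_{v}(L^{\infty}_{x})}^{1/3}=:C_{*}.
\end{equation*}
With this $C_{*}$ in hand, the hypothesis \eqref{nf32} of Proposition \ref{prop1} is satisfied, and the coercivity claim \eqref{core} in that proof gives $D(t)\geq 2\alpha\,E(t)$ with $\alpha=\min\{\mu,\kappa\}/\bigl(2+C_{S}(\|\rho_{0}\|_{L^{3/2}}+2C_{*})\bigr)$. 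Comparing denominators, $2+C_{S}(\|\rho_{0}\|_{L^{3/2}}+2C_{*})$ is exactly the denominator appearing in $\alpha_{1}$, and since $\kappa\geq 1$ we have $\min\{\mu,\kappa\}\geq\min\{\mu,1\}$; hence $\alpha\geq\alpha_{1}$. In particular $D(t)\geq 2\alpha_{1} E(t)$.

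The second step is to upgrade this pointwise decay into the stated time-integrated form. Starting from the basic energy inequality \eqref{enrgineq} and multiplying by $e^{\alpha_{1}t}$, I compute
\begin{equation*}
\frac{\mathrm{d}}{\mathrm{d}t}\bigl(e^{\alpha_{1}t}E(t)\bigr)+e^{\alpha_{1}t}\bigl(D(t)-\alpha_{1}E(t)\bigr)\leq 0.
\end{equation*}
Using $D(t)\geq 2\alpha E(t)$ with $\alpha\geq\alpha_{1}$, I split one half of the dissipation to absorb the $\alpha_{1}E$ term:
\begin{equation*}
D(t)-\alpha_{1}E(t)\;\geq\;D(t)-\frac{\alpha_{1}}{2\alpha}D(t)\;\geq\;\tfrac{1}{2}D(t).
\end{equation*}
Substituting back yields the clean inequality
\begin{equation*}
\frac{\mathrm{d}}{\mathrm{d}t}\bigl(e^{\alpha_{1}t}E(t)\bigr)+\tfrac{1}{2}e^{\alpha_{1}t}D(t)\leq 0,
\end{equation*}
and a straightforward integration on $[0,t]\subset[0,T]$ followed by taking the supremum in $t$ produces exactly \eqref{eptd}.

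The argument is essentially a bookkeeping exercise once Lemma \ref{lem2} and Proposition \ref{prop1} are available; there is no real obstacle. The only subtle point worth stating carefully is the comparison $\alpha\geq\alpha_{1}$, which hinges on the hypothesis $\kappa\geq 1$ and is what allows the rate $\alpha_{1}$ appearing in the statement (which depends on $\mu$ but not on $\kappa$ in its numerator) to be used in place of the $\kappa$-dependent rate that Proposition \ref{prop1} would naively deliver. This independence of $\kappa$ in the numerator is convenient for later bootstrap arguments, so I would emphasize it in a brief remark after the proof.
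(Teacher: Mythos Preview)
Your proposal is correct and follows essentially the same approach as the paper: the paper also feeds the bound $\|n_{f}\|_{L^{3/2}}\leq 2\|f_{0}\|_{L^{1}_{x,v}}^{2/3}\|f_{0}\|_{L^{1}_{v}(L^{\infty}_{x})}^{1/3}$ from Lemma \ref{lem2} into the coercivity \eqref{core}, uses $\kappa\geq1$ to pass from $\alpha$ to $\alpha_{1}$, and then derives the differential inequality $\tfrac{\mathrm{d}}{\mathrm{d}t}E+\alpha_{1}E+\tfrac{1}{2}D\leq0$ before multiplying by $e^{\alpha_{1}t}$ and integrating. Your only cosmetic difference is the order of operations (you multiply by the weight first and then split the dissipation), which is an equivalent rearrangement of the same computation.
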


\begin{proof}
Substituting the bound \eqref{nf} with $p=\frac{3}{2}$ into \eqref{alp} and gathering \eqref{enrgineq}, \eqref{core} and $\kappa\geq 1$, one can verify that
\begin{equation}\label{gr}
\frac{{\rm{d}}}{{\rm{d}}t}E(t)+\alpha_{1} E(t)+\frac{1}{2}D(t)\leq 0.
\end{equation}
Multiplying two sides of \eqref{gr} by $e^{\alpha_{1} t}$ and then integrating the resulting inequality over $[0,t]$ for all $t\in(0,T]$, we conclude \eqref{eptd} and complete the proof.
\end{proof}

Then, we provide the following key lemma concerning the uniform estimate of the dissipation $D(t)$.

\begin{lemma}\label{lem4}
Let $T>0$ and $(\rho,u,f)$ be the smooth solution to \eqref{NSV} for $t\in(0,T]$ with the initial data $(\rho_{0},u_{0},f_{0})$ satisfying \eqref{indacd}. Then under the condition \eqref{priori}, it holds
\begin{equation}\label{dsin0}
\sup_{t\in(0,T]}D(t)+\int_{0}^T\left(\mathcal{E}(t)+\mu^{2} \|\nabla^{2}u\|_{L^{2}}^{2}+\|\nabla P\|_{L^{2}}^{2}\right){\rm{d}}t\leq C_{3}(\mu+\kappa)M.
\end{equation}
Moreover, for any $T>1$ and $\beta_{1}\in[0,1]$, we have the short-time weighted estimates
\begin{equation}\label{dsin}
\begin{aligned}
&\sup_{t\in(0,1]}t^{\beta_{1}}D(t)+\int_{0}^{1}t^{\beta_{1}}\left(\mathcal{E}(t)+\mu^{2} \|\nabla^{2}u\|_{L^{2}}^{2}+\|\nabla P\|_{L^{2}}^{2}\right){\rm{d}}t\\
&\leq C_{3}(\mu+\kappa)^{1-\beta_{1}}E_{0}^{\beta_{1}} M^{1-\beta_{1}},
\end{aligned}
\end{equation}
and the large-time exponential stability
\begin{equation}\label{exdsin}
\begin{aligned}
\sup_{t\in[1,T]}e^{\alpha_{1}t}D(t)+\int_{1}^Te^{\alpha_{1}t}\left(\mathcal{E}(t)+\mu^{2} \|\nabla^{2}u\|_{L^{2}}^{2}+\|\nabla P\|_{L^{2}}^{2}\right){\rm{d}}t\leq C_{3}E_{0}.
\end{aligned}
\end{equation}
Here the functionals $D(t)$ and $\mathcal{E}(t)$ are defined by \eqref{dspt} and \eqref{dsdspt}, the constants $E_{0}$, $M$ and $\alpha_{1}$ are given by \eqref{inenrg}, \eqref{indspt} and \eqref{alpha1}, respectively, and $C_{3}>0$ is a constant depending only on $\|\rho_{0}\|_{L^{\infty}}$ and $\|f_{0}\|_{L^{1}_{v}(L^{\infty}_{x})}$.
\end{lemma}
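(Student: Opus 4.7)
The plan is to derive a differential inequality of the form $\tfrac{d}{dt}D(t) + \mathcal{E}(t) \leq 2 g_1(t) D(t)$ with $g_1 \in L^1([0,T])$ under the a-priori bound \eqref{priori}, and then combine it with Stokes elliptic regularity to extract the desired estimates through Gr\"onwall-type arguments. First I would test the momentum equation $\eqref{NSV}_2$ in $L^2$ against $u_t$: the pressure term drops by $\operatorname{div} u_t = 0$, and the viscous term yields $\tfrac{\mu}{2}\tfrac{d}{dt}\|\nabla u\|_{L^2}^2$. For the drag term, I would compute $\tfrac{d}{dt}\int|u-v|^2 f\,dv\,dx$ using the Vlasov equation $\eqref{NSV}_4$: integration by parts in $v$ produces $-2\kappa\int|u-v|^2 f$, integration by parts in $x$ produces $2\int (u-v)\cdot(v\cdot\nabla_x u) f$, and the remaining piece is exactly $2\int (u-v)\cdot u_t\, f$. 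Rearranging gives the fundamental identity
\begin{equation*}
\frac{1}{2}\frac{d}{dt}D(t) + \mathcal{E}(t) = -\int_{\mathbb{R}^3}\rho\,(u\cdot\nabla u)\cdot u_t\,dx + \kappa\int_{\mathbb{R}^3\times\mathbb{R}^3}(u-v)\cdot(v\cdot\nabla_x u)\,f\,dv\,dx.
\end{equation*}

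Next I would bound the right-hand side. The convective piece is controlled by Cauchy--Schwarz and Young: $|\int\rho(u\cdot\nabla u)\cdot u_t|\leq \tfrac{1}{4}\|\sqrt{\rho}u_t\|_{L^2}^2 + C\|\rho_0\|_{L^\infty}\|u\|_{L^\infty}^2\|\nabla u\|_{L^2}^2$, using the $L^\infty$-bound on $\rho$ from Lemma \ref{lem1}. For the coupling piece, I would decompose $|v|\leq |u-v|+|u|$ and pull out $\|\nabla u\|_{L^\infty}$; the resulting terms are bounded via Cauchy--Schwarz together with the uniform $L^1\cap L^\infty$ control of $n_f$ from Lemma \ref{lem2}, producing a coefficient $g_1(t) \lesssim \mu^{-1}\|u\|_{L^\infty}^2 + \|\nabla u\|_{L^\infty}$ multiplying $D(t)$. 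Under the a-priori bound \eqref{priori}, $\|g_1\|_{L^1([0,T])}$ is of order $1$ (indeed small). Separately, viewing $\eqref{NSV}_{2,3}$ as a stationary Stokes system with source $-\rho u_t-\rho u\cdot\nabla u-\int\kappa(u-v)f\,dv$ and applying elliptic regularity gives
\begin{equation*}
\mu^2\|\nabla^2 u\|_{L^2}^2 + \|\nabla P\|_{L^2}^2 \lesssim \|\rho\|_{L^\infty}\mathcal{E}(t) + \|\rho\|_{L^\infty}^2\|u\|_{L^\infty}^2\mu^{-1} D(t) + \kappa\|n_f\|_{L^\infty} D(t),
\end{equation*}
where the drag source is handled via $|\int(u-v)f\,dv|^2\leq n_f\int|u-v|^2 f\,dv$.

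With these two ingredients the basic estimate \eqref{dsin0} follows: integrating $\tfrac{d}{dt}D + \mathcal{E}\leq 2g_1 D$ with $D(0)\leq \max(\mu,\kappa)M$ and applying Gr\"onwall gives $\sup_{[0,T]} D \leq C(\mu+\kappa)M$ and $\int_0^T \mathcal{E}\,dt \leq C(\mu+\kappa)M$, and the Stokes bound propagates this to $\mu^2\|\nabla^2 u\|_{L^2}^2$ and $\|\nabla P\|_{L^2}^2$. For the short-time weighted estimate \eqref{dsin}, I would multiply the differential inequality by $t^{\beta_1}$, producing an extra $\beta_1\int_0^1 s^{\beta_1-1}D(s)\,ds$ term; splitting this integral at $\delta \sim E_0/[(\mu+\kappa)M]$ and interpolating between $\int_0^1 D\leq 2E_0$ (from the energy inequality \eqref{enrgineq}) and $\sup_{[0,1]} D \leq C(\mu+\kappa)M$ (from \eqref{dsin0}) yields the claimed scaling $(\mu+\kappa)^{1-\beta_1}E_0^{\beta_1}M^{1-\beta_1}$. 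For the large-time exponential estimate \eqref{exdsin}, I would multiply by $e^{\alpha_1 t}$ and start from some $t_0\in[0,1]$ with $D(t_0)\leq 2E_0$ (which exists since $\int_0^1 D\leq 2E_0$); Lemma \ref{lem3}'s bound $\int_0^T e^{\alpha_1 s}D\,ds\leq 2E_0$ controls the linear forcing, and the smallness of $\|g_1\|_{L^1}$ from \eqref{priori} absorbs $\int g_1 e^{\alpha_1 s}D\,ds$ into the left-hand side, closing as $\sup_{[1,T]}e^{\alpha_1 t}D \leq CE_0$.

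The main obstacle will be keeping the dependence on $\mu$ and $\kappa$ sharp throughout, so as to match the precise prefactors $(\mu+\kappa)$, $(\mu+\kappa)^{1-\beta_1}$, and the clean $CE_0$ in \eqref{dsin0}--\eqref{exdsin}. In particular, the coupling term $\kappa\int(u-v)\cdot(v\cdot\nabla u)f$ must be balanced against the drag dissipation $\kappa^2\int|u-v|^2 f$ inside $\mathcal{E}$ without generating spurious factors of $\kappa$; and the Stokes source $\|\int\kappa(u-v)f\,dv\|_{L^2}^2$ must be seen as $\lesssim \kappa\|n_f\|_{L^\infty}D$ via the pointwise Lemma \ref{lem2} bound, rather than the naive $\kappa^2$-scale estimate.
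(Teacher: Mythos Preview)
Your fundamental identity $\tfrac{1}{2}\tfrac{d}{dt}D+\mathcal{E}=-\int\rho(u\cdot\nabla u)\cdot u_t+\kappa\int(u-v)\cdot(v\cdot\nabla_x u)f$ is correct and matches the paper. The gap is in how you bound the convective term. Your proposed estimate
\[
\Big|\int\rho(u\cdot\nabla u)\cdot u_t\Big|\leq \tfrac14\|\sqrt\rho\,u_t\|_{L^2}^2+C\|\rho_0\|_{L^\infty}\|u\|_{L^\infty}^2\|\nabla u\|_{L^2}^2
\]
feeds $\mu^{-1}\|u\|_{L^\infty}^2$ into $g_1$, but the a-priori bound \eqref{priori} only controls $\int_0^T\kappa\|u\|_{L^\infty}\,dt$, not $\int_0^T\|u\|_{L^\infty}^2\,dt$. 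There is no way to recover the latter from \eqref{priori} without invoking $\|\nabla^2 u\|_{L^2}$ (via $\|u\|_{L^\infty}\lesssim\|\nabla u\|_{L^2}^{1/2}\|\nabla^2 u\|_{L^2}^{1/2}$), which is precisely the quantity you are trying to bound---so the argument is circular. The same issue reappears in your Stokes estimate through the term $\|\rho\|_{L^\infty}^2\|u\|_{L^\infty}^2\mu^{-1}D$.

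The paper avoids this by estimating the convective piece via Gagliardo--Nirenberg as $\|\sqrt\rho\,u_t\|_{L^2}\|u\|_{L^6}\|\nabla u\|_{L^3}\lesssim\|\sqrt\rho\,u_t\|_{L^2}\|\nabla u\|_{L^2}^{3/2}\|\nabla^2 u\|_{L^2}^{1/2}$, and Young's inequality then throws off a small term $\eta\mu^2\|\nabla^2 u\|_{L^2}^2$ plus the Gr\"onwall coefficient $\mu^{-3}\|\nabla u\|_{L^2}^4$ (which \emph{is} in \eqref{priori}). The crucial point is that the Stokes estimate is not applied after the fact but is \emph{added into} the differential inequality (multiplied by a small constant) so as to absorb that $\eta\mu^2\|\nabla^2 u\|_{L^2}^2$. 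For this absorption to work, the drag source $\|\int\kappa(u-v)f\,dv\|_{L^2}^2$ in the Stokes estimate must be seen as $\lesssim\|n_f\|_{L^\infty}\,\mathcal{E}(t)$ (since $\mathcal{E}$ already contains $\kappa^2\int|u-v|^2 f$), \emph{not} as $\kappa\|n_f\|_{L^\infty}D(t)$ as you suggest---the latter would reinject a coefficient $\kappa\|n_f\|_{L^\infty}$ that is not uniformly integrable in $T$. The upshot is a single clean inequality $\tfrac{d}{dt}D+\mathcal{E}+\eta(\mu^2\|\nabla^2 u\|_{L^2}^2+\|\nabla P\|_{L^2}^2)\leq C(\kappa\|\nabla u\|_{L^\infty}+\mu^{-3}\|\nabla u\|_{L^2}^4)D$, whose Gr\"onwall factor is controlled directly by \eqref{priori}. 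Your interpolation scheme for \eqref{dsin} and your large-time argument for \eqref{exdsin} are essentially sound (the paper interpolates by H\"older between $\beta_1=0$ and $\beta_1=1$ rather than splitting at $\delta$, but either works).
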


\begin{proof}
We first show the uniform estimate \eqref{dsin0}. Multiplying $\eqref{NSV}_{2}$ by $u_{t}$ and then integrating by parts, one has
\begin{equation}\label{dpes01}
\frac{1}{2}\frac{{\rm{d}}}{{\rm{d}}t}\int_{\mathbb{R}^{3}}\mu|\nabla u|^{2}{\rm{d}}x+\int_{\mathbb{R}^{3}}\rho|u_{t}|^{2}{\rm{d}}x=-\int_{\mathbb{R}^{3}} u_{t}\cdot \rho u\cdot\nabla u {\rm{d}}x-\int_{\mathbb{R}^{3}\times\mathbb{R}^{3}}\kappa u_{t}\cdot(u-v)f{\rm{d}}v{\rm{d}}x.
\end{equation}
It follows from $\eqref{NSV}_{3}$ that
\begin{equation}\label{dpes02}
\begin{aligned}
&\int_{\mathbb{R}^{3}\times\mathbb{R}^{3}} u_{t}\cdot(u-v)f{\rm{d}}v{\rm{d}}x\\
&=\frac{1}{2}\frac{{\rm{d}}}{{\rm{d}}t} \int_{\mathbb{R}^{3}\times\mathbb{R}^{3}} (|u|^{2}-2u\cdot v) f {\rm{d}}v{\rm{d}}x-\frac{1}{2}\int_{\mathbb{R}^{3}\times\mathbb{R}^{3}} (|u|^{2}-2u\cdot v)f_{t}{\rm{d}}v{\rm{d}}x\\
&=\frac{1}{2}\frac{{\rm{d}}}{{\rm{d}}t} \int_{\mathbb{R}^{3}\times\mathbb{R}^{3}} (|u|^{2}-2u\cdot v) f {\rm{d}}v{\rm{d}}x\\
&\quad+\frac{1}{2}\int_{\mathbb{R}^{3}\times\mathbb{R}^{3}}(|u|^{2}-2u\cdot v)(v\cdot\nabla_{x}f+{\rm{div}}(\kappa(u-v)f)){\rm{d}}v{\rm{d}}x\\
&=\frac{1}{2}\frac{{\rm{d}}}{{\rm{d}}t} \int_{\mathbb{R}^{3}\times\mathbb{R}^{3}} (|u|^{2}-2u\cdot v) f {\rm{d}}v{\rm{d}}x-\int_{\mathbb{R}^{3}\times\mathbb{R}^{3}} (v\cdot\nabla_{x} u-\kappa u)\cdot (u-v)f{\rm{d}}v{\rm{d}}x\\
&=\frac{1}{2}\frac{{\rm{d}}}{{\rm{d}}t} \int_{\mathbb{R}^{3}\times\mathbb{R}^{3}} |u-v|^{2}f {\rm{d}}v{\rm{d}}x-\int_{\mathbb{R}^{3}\times\mathbb{R}^{3}} (v\cdot\nabla_{x}u)\cdot (u-v)f {\rm{d}}v{\rm{d}}x+\kappa\int_{\mathbb{R}^{3}\times\mathbb{R}^{3}} |u-v|^{2}f {\rm{d}}v{\rm{d}}x,
\end{aligned}\end{equation}
where we noticed that the energy of the Vlasov equation $\eqref{NSV}_{3}$ satisfies
\begin{equation}\label{dpes03}
\frac{1}{2}\frac{{\rm{d}}}{{\rm{d}}t}\int_{\mathbb{R}^{3}\times\mathbb{R}^{3}} |v|^{2} f{\rm{d}}v{\rm{d}}x=\int_{\mathbb{R}^{3}\times\mathbb{R}^{3}} \kappa v\cdot(u-v)f {\rm{d}}v{\rm{d}}x.
\end{equation}
The combination of \eqref{dpes01} and \eqref{dpes02} yields
\begin{equation}\label{dpes}
\frac{1}{2}\frac{{\rm{d}}}{{\rm{d}}t}D(t)+\mathcal{E}(t)=-\int_{\mathbb{R}^{3}} u_{t} \cdot\rho u\cdot\nabla u{\rm{d}}x+\int_{\mathbb{R}^{3}\times\mathbb{R}^{3}}\kappa (v\cdot\nabla_{x}u)\cdot (u-v)f {\rm{d}}v{\rm{d}}x,
\end{equation}
where we recalled
\begin{equation*}
\mathcal{E}(t):=\int_{\mathbb{R}^{3}} \rho|u_{t}|^{2} {\rm{d}}x+\int_{\mathbb{R}^{3}\times\mathbb{R}^{3}}\kappa^{2}|u-v|^{2}f{\rm{d}}v{\rm{d}}x.
\end{equation*}
To estimate the terms on the right-hand side of \eqref{dpes}, we apply \eqref{rhoinfty}  and the Gagliardo-Nirenberg inequality \eqref{GNSine} to obtain
\begin{equation}\label{mmmmm1}
\begin{aligned}
\left|\int_{\mathbb{R}^{3}} u_{t} \cdot\rho u\cdot\nabla u{\rm{d}}x\right|&\leq \|\rho_{0}\|_{L^{\infty}}^\frac{1}{2}\|\sqrt{\rho}u_{t}\|_{L^{2}}\|u\|_{L^{6}}\|\nabla u\|_{L^{3}}\\
&\leq C\|\sqrt{\rho}u_{t}\|_{L^{2}}\|\nabla u\|_{L^{2}}^{\frac{3}{2}}\|\nabla^{2}u\|_{L^{2}}^{\frac{1}{2}}\\
&\leq \eta \mu^{2} \|\nabla^{2}u\|_{L^{2}}^{2}+\eta\|\sqrt{\rho}u\|_{L^{2}}^{2}+ C_{\eta}\mu^{-2}\|\nabla u\|_{L^{2}}^{6},
\end{aligned}\end{equation}
where $\eta>0$ is a small constant to be determined later.  Similarly, it holds that
\begin{equation}\label{mmmmm2}
\begin{aligned}
&\kappa \left|\int_{\mathbb{R}^{3}\times\mathbb{R}^{3}} (v\cdot\nabla_{x}u)\cdot (u-v)f {\rm{d}}v{\rm{d}}x\right|\\
&\leq\kappa \left|\int_{\mathbb{R}^{3}\times\mathbb{R}^{3}} (v-u)\cdot\nabla_{x}u\cdot (u-v)f {\rm{d}}v{\rm{d}}x\right|+\kappa\left|\int_{\mathbb{R}^{3}\times\mathbb{R}^{3}} u\cdot\nabla_{x}u\cdot (u-v)f {\rm{d}}v{\rm{d}}x\right|\\
&\leq \kappa\|\nabla u\|_{L^{\infty}}\int_{\mathbb{R}^{3}\times\mathbb{R}^{3}} |u-v|^{2}f{\rm{d}}v{\rm{d}}x\\
&\quad+ C\kappa\|\nabla u\|_{L^{2}}^{\frac{3}{2}}\|\nabla^{2}u\|_{L^{2}}^{\frac{1}{2}}\left(\int_{\mathbb{R}^{3}\times\mathbb{R}^{3}} |u-v|^{2}f{\rm{d}}v{\rm{d}}x\right)^{\frac{1}{2}}\\
&\leq \left(\kappa\|\nabla u\|_{L^{\infty}}+\eta \kappa^{2}\right) \int_{\mathbb{R}^{3}\times\mathbb{R}^{3}} |u-v|^{2}f{\rm{d}}v{\rm{d}}x+ \eta \mu^{2} \|\nabla^{2}u\|_{L^{2}}^{2}+C_{\eta}\mu^{-2}\|\nabla u\|_{L^{2}}^{6}.
\end{aligned}\end{equation}
Substituting \eqref{mmmmm1} and \eqref{mmmmm2} into \eqref{dpes}  yields
\begin{equation}\label{dpesooo}
\frac{1}{2}\frac{{\rm{d}}}{{\rm{d}}t}D(t)+(1-\eta)\mathcal{E}(t)\leq C_{\eta}\Big(\kappa\|\nabla u\|_{L^{\infty}}+\mu^{-3}\|\nabla u\|_{L^{2}}^{4}\Big) D(t)+2\eta \mu^{2} \|\nabla^{2}u\|_{L^{2}}^{2}.
\end{equation}
To handle the $L^{2}$-norm of $\nabla^{2} u$ involved in \eqref{dpesooo}, we apply the Stokes estimate (see Lemma \ref{SSineq}) to $\eqref{NSV}_{2}$, \eqref{nf2} and the Gagliardo-Nirenberg inequality \eqref{GNSine}, and then deduce that
\begin{equation*}
\begin{aligned}
&\mu^{2}\|\nabla^{2} u\|_{L^{2}}^{2}+\|\nabla P\|_{L^{2}}^{2}\\
&\leq C\Big\|\rho u_{t}+\rho u\cdot\nabla u+\int_{\mathbb{R}^{3}}\kappa(u-v)f {\rm{d}}v\Big\|_{L^{2}}^{2}\\
&\leq C\|\rho_{0}\|_{L^{\infty}}\|\sqrt{\rho}u_{t}\|_{L^{2}}^{2}+C\|\rho_{0}\|_{L^{\infty}}^{2}\|u\|_{L^{6}}^{2}\|\nabla u\|_{L^{3}}^{2}+C\kappa^{2}\|n_{f}\|_{L_{x}^{\infty}}\int_{\mathbb{R}^{3}\times\mathbb{R}^{3}}|u-v|^{2}f {\rm{d}}v{\rm{d}}x\\
&\leq \frac{\mu^{2}}{2}  \|\nabla^{2}u\|_{L^{2}}^{2}+C\|\sqrt{\rho}u_{t}\|_{L^{2}}^{2}+C\mu^{-2}\|\nabla u\|_{L^{2}}^{6}+C\kappa^{2} \|f_{0}\|_{L^{1}_{v}(L^{\infty}_{x})}\int_{\mathbb{R}^{3}\times\mathbb{R}^{3}}|u-v|^{2}f {\rm{d}}v{\rm{d}}x,
\end{aligned}
\end{equation*}
where we used the upper bound of $n_{f}$ in \eqref{nf}. Hence, it follows that
\begin{equation}\label{mmmmm3}
\mu^{2}\|\nabla^{2} u\|_{L^{2}}^{2}+\|\nabla P\|_{L^{2}}^{2}\leq C(1+\|f_{0}\|_{L^{1}_{v}(L^{\infty}_{x})})\mathcal{E}(t)+C\mu^{-3}\|\nabla u\|_{L^{2}}^{4} D(t).
\end{equation}
Multiplying \eqref{mmmmm3} by $3\eta$, adding the resulting inequality and $\eqref{dpesooo}$ together and then choosing
\begin{equation*}
\eta=\eta_{f_{0}}=\frac{1}{2(1+3C(1+\|f_{0}\|_{L^{1}_{v}(L^{\infty}_{x})}))},
\end{equation*}
we conclude that
\begin{equation}\label{mmmmm4}
\frac{{\rm{d}}}{{\rm{d}}t}D(t)+\mathcal{E}(t)+\eta_{f_{0}}\left(\mu^{2}\|\nabla^{2} u\|_{L^{2}}^{2}+\|\nabla P\|_{L^{2}}^{2}\right)\leq C\left(\kappa\|\nabla u\|_{L^{\infty}}+\mu^{-3}\|\nabla u\|_{L^{2}}^{4}\right) D(t).
\end{equation}
Taking advantage of the Gr\"{o}nwall inequality and \eqref{priori}, we obtain
\begin{equation}\label{dsin01}
\begin{aligned}
&\sup_{t\in(0,T]}D(t)+\int_{0}^T\left(\mathcal{E}(t)+\mu^{2}\|\nabla^{2}u\|_{L^{2}}^{2}+\|\nabla P\|_{L^{2}}^{2}\right){\rm{d}}t\\
&\leq \exp\left(C\int_{0}^T\left(\kappa\|\nabla u\|_{L^{\infty}}+\mu^{-3}\|\nabla u\|_{L^{2}}^{4}\right){\rm{d}}t\right)D(0)\\&\leq C(\mu+\kappa)M.
\end{aligned}
\end{equation}

Next, in the short-time case $t\in[0,1]$, we multiply \eqref{mmmmm4} by the time weight $t$ to have
\begin{equation*}
\begin{aligned}
&\frac{{\rm{d}}}{{\rm{d}}t}\big(tD(t)\big)+t\left(\mathcal{E}(t)+\eta_{f_{0}} (\mu^{2} \|\nabla^{2}u\|_{L^{2}}^{2}+\|\nabla P\|_{L^{2}}^{2})\right)\\
&\leq C(\kappa\|\nabla u\|_{L^{\infty}}+\mu^{-3} \|\nabla u\|_{L^{2}}^{4})\big(tD(t)\big)+D(t).
\end{aligned}
\end{equation*}
Since $D(t)\in L^{1}(0,T)$, there exists some sequence $t_{k}\rightarrow 0$ such that $\displaystyle\lim_{t_{k}\rightarrow 0} t_{k} D(t_k)=0$. Applying Gr\"onwall's inequality and \eqref{priori} on $[t_{k},1]$, letting $t_{k}\rightarrow 0$  and then  employing \eqref{eptd}, we obtain the time-weighted estimate
\begin{equation}\label{weightt}
\begin{aligned}
&\sup_{t\in(0,1]}tD(t)+\int_{0}^{1}t\left(\mathcal{E}(t)+\mu^{2} \|\nabla^{2}u\|_{L^{2}}^{2}+\|\nabla P\|_{L^{2}}^{2}\right){\rm{d}}t\\
&\leq C\int_{0}^{1}D(t){\rm{d}}t\\&\leq CE_{0}.
\end{aligned}
\end{equation}
Hence, the H\"older inequality between \eqref{dsin01} and \eqref{weightt} leads to \eqref{dsin}.

Finally, in the large-time case $t\in[1,T]$, multiplying \eqref{mmmmm4} by the exponential weight $e^{\alpha_{1} t}$, we have
\begin{equation*}
\begin{aligned}
&\frac{{\rm{d}}}{{\rm{d}}t}\big(e^{\alpha_{1} t}D(t)\big)+e^{\alpha_{1} t}\left(\mathcal{E}(t)+\eta_{f_{0}}\left(\mu^{2} \|\nabla^{2}u\|_{L^{2}}^{2}+\|\nabla P\|_{L^{2}}^{2}\right)\right)\\
&\leq C\left(\kappa\|\nabla u\|_{L^{\infty}}+ \mu^{-3} \|\nabla u\|_{L^{2}}^{4}\right)e^{\alpha_{1} t}D(t)+\alpha_{1} e^{\alpha_{1} t}D(t),
\end{aligned}
\end{equation*}
which, in a similar argument as above, yields that
\begin{equation*}
\begin{aligned}
&\sup_{t\in[1,T]}e^{\alpha_{1}t}D(t)+\int_{1}^Te^{\alpha_{1}t}\left(\mathcal{E}(t)+\frac{1}{2C}\mu^{2} \|\nabla^{2}u\|_{L^{2}}^{2}+\|\nabla P\|_{L^{2}}^{2}\right){\rm{d}}t\\
&\leq e^{\alpha_{1}}D(1)+C\alpha_{1} \int_{1}^T e^{\alpha_{1}t}D(t){\rm{d}}t\\&\leq CE_{0},
\end{aligned}
\end{equation*}
where we have applied \eqref{weightt} with $t=1$ and noticed that $\alpha_{1}\leq 1$.

Therefore, we complete the proof of Lemma \ref{lem4}.
\end{proof}

In addition, we establish the higher order weighted estimates of $u$ as follows.

\begin{lemma}\label{lem5}
Let $T>1$ and $(\rho,u,f)$ be the smooth solution to \eqref{NSV} for $t\in(0,T]$ with the initial data $(\rho_{0},u_{0},f_{0})$ satisfying \eqref{indacd}. Then under the condition \eqref{priori}, for any $\beta_{2}\in[0,1]$, one has the short-time estimates
\begin{equation}\label{twds}
\begin{aligned}
&\sup_{t\in(0,1]}t^{1+\beta_{2}}\mathcal{E}(t)+\int_{0}^{1}
t^{1+\beta_{2}}\left(\mu\|\nabla u_{t}(t)\|_{L^{2}}^{2}+ \kappa \|\sqrt{n_{f}}u_{t}(t)\|_{L^{2}}^{2}\right){\rm{d}}t\\
&\leq C_{4}\Big(\mu^{-5}(\mu+\kappa)E_{0} M+\mu^{-5}E_{0} +\mu^{-1}\kappa+E_{0}+1 \Big) (\mu+\kappa)^{1-\beta_{2}} E_{0}^{\beta_{2}} M^{1-\beta_{2}},
\end{aligned}
\end{equation}
and the large-time exponential stability
\begin{equation}\label{extwds}
\begin{aligned}
&\sup_{t\in[1,T]}e^{\alpha_{1} t}\mathcal{E}(t)+\int_{1}^T
e^{\alpha_{1} t}\left(\mu\|\nabla u_{t}(t)\|_{L^{2}}^{2}+ \kappa \|\sqrt{n_{f}}u_{t}(t)\|_{L^{2}}^{2}\right){\rm{d}}t\\
&\leq C_{4}\Big(\mu^{-5}(\mu+\kappa)E_{0} M+\mu^{-5}E_{0} +\mu^{-1}\kappa+E_{0}+1 \Big)  E_{0},
\end{aligned}
\end{equation}
where the functional $\mathcal{E}(t)$ is defined by \eqref{dsdspt}, the constants $E_{0}$, $M$ and $\alpha_{1}$ are given by \eqref{inenrg}, \eqref{indspt} and \eqref{alpha1}, respectively, and $C_{4}>0$ is a constant depending only on $\|\rho_{0}\|_{L^\frac{3}{2}\cap L^{\infty}}$, $\|f_{0}\|_{L^{1}_{x,v}}$ and $\|(1+|v|^{2}) f_{0}\|_{L_{v}^{1}(L_{x}^{\infty})}$.
\end{lemma}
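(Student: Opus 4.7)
The plan is to derive the functional inequality \eqref{dsdsptineq} and then run a weighted Gr\"onwall argument, imitating the strategy used for Lemma \ref{lem4} but one derivative higher. First I would apply $\partial_{t}$ to the momentum equation $\eqref{NSV}_{2}$ and test the resulting identity against $u_{t}$. Using the mass equation $\eqref{NSV}_{1}$ in the form $\rho_{t}=-{\rm div}(\rho u)$ and the incompressibility $\eqref{NSV}_{3}$ to process the time derivative of $(\rho u)_{t}$ and $\nabla P_{t}$, this produces
\begin{equation*}
\frac{1}{2}\frac{{\rm d}}{{\rm d}t}\int_{\mathbb{R}^{3}}\rho|u_{t}|^{2}{\rm d}x+\mu\int_{\mathbb{R}^{3}}|\nabla u_{t}|^{2}{\rm d}x=-\int_{\mathbb{R}^{3}}\rho u\cdot\nabla(u\cdot\nabla u)\cdot u_{t}{\rm d}x-\int_{\mathbb{R}^{3}}\rho u\cdot\nabla u_{t}\cdot u_{t}{\rm d}x+\mathcal{R}_{f}(t),
\end{equation*}
where $\mathcal{R}_{f}(t)$ collects the terms coming from $\partial_{t}\int\kappa(u-v)f\,{\rm d}v$. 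Each of the fluid nonlinearities is handled by the Gagliardo--Nirenberg inequality, the $L^{\infty}$-bound of $\rho$ from Lemma \ref{lem1} and the Stokes estimate \eqref{mmmmm3} already obtained in the proof of Lemma \ref{lem4}, producing a contribution of type $(\mu^{-3}\|\nabla u\|_{L^{2}}^{4}+\kappa\|\nabla u\|_{L^{\infty}})\mathcal{E}(t)$ up to absorption by $\mu\|\nabla u_{t}\|_{L^{2}}^{2}$.

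Next I would differentiate $\kappa^{2}\int|u-v|^{2}f\,{\rm d}v{\rm d}x$ in time. The $u_{t}$-contribution gives exactly the cross term $2\kappa^{2}\int(u-v)\cdot u_{t}f\,{\rm d}v{\rm d}x$, which splits via $u_{t}f=\sqrt{n_{f}}u_{t}\cdot\sqrt{f}$ into $\tfrac{1}{4}\kappa\|\sqrt{n_{f}}u_{t}\|_{L^{2}}^{2}$ plus an $\mathcal{E}$-type remainder absorbable once \eqref{priori} is used. The $f_{t}$-contribution is rewritten using $\eqref{NSV}_{4}$ and integration by parts in $v$; this generates a good sign term $-2\kappa^{3}\int|u-v|^{2}f$ together with transport corrections involving $\int(u-v)\cdot v\,f$, which are controlled by $\|j_{f}\|_{L^{\infty}}$ and $\|e_{f}\|_{L^{\infty}}$ from Lemma \ref{lem2}. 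Summing the fluid and particle balances yields \eqref{dsdsptineq} with
\begin{equation*}
g_{2}(t)\lesssim \kappa\|\nabla u\|_{L^{\infty}}+\mu^{-3}\|\nabla u\|_{L^{2}}^{4}+\kappa,\qquad F(t)\lesssim \mu^{-3}(\mu+\kappa)\|\nabla u\|_{L^{2}}^{2}D(t)+(\mu^{-1}\kappa+1)D(t),
\end{equation*}
both of which are integrable by \eqref{priori} and \eqref{dsin0}--\eqref{exdsin}.

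With \eqref{dsdsptineq} in hand, the weighted estimates follow by multiplying by $t^{1+\beta_{2}}$ respectively $e^{\alpha_{1}t}$ and integrating. For the short-time bound I would integrate from a sequence $t_{k}\to0$ chosen so that $t_{k}^{1+\beta_{2}}\mathcal{E}(t_{k})\to0$ (available since $\mathcal{E}\in L^{1}(0,1)$ by Lemma \ref{lem4}), apply Gr\"onwall with $\int_{0}^{1}g_{2}\,{\rm d}t\lesssim1$ from \eqref{priori}, and estimate the residual $\int_{0}^{1}t^{\beta_{2}}\mathcal{E}(t)\,{\rm d}t+\int_{0}^{1}t^{1+\beta_{2}}F(t)\,{\rm d}t$ using \eqref{dsin}. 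The case $\beta_{2}=1$ is obtained directly and the case $\beta_{2}=0$ from a H\"older interpolation with \eqref{dsin0}; this accounts for the factor $(\mu+\kappa)^{1-\beta_{2}}E_{0}^{\beta_{2}}M^{1-\beta_{2}}$ in \eqref{twds}. The large-time bound \eqref{extwds} is proved analogously on $[1,T]$ by using $e^{\alpha_{1}t}$ in place of the polynomial weight, matching at $t=1$ through \eqref{twds}$|_{\beta_{2}=0}$, and using $\alpha_{1}\leq1$ together with \eqref{exdsin} to absorb the $\alpha_{1}e^{\alpha_{1}t}\mathcal{E}(t)$ error. The main obstacle I anticipate is identifying $F(t)$ cleanly: the $f_{t}$ piece of the particle term generates contributions that a priori involve higher velocity moments, and one must integrate by parts in $v$ and reuse the $L^{\infty}$-bounds on $n_{f},j_{f},e_{f}$ from Lemma \ref{lem2} so that only $D(t)$ (and not higher moments of $f$) appear in $F(t)$, which is precisely what produces the constant $C_{4}$ depending only on $\|\rho_{0}\|_{L^{3/2}\cap L^{\infty}}$, $\|f_{0}\|_{L^{1}_{x,v}}$ and $\|(1+|v|^{2})f_{0}\|_{L^{1}_{v}(L^{\infty}_{x})}$.
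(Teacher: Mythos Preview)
Your overall architecture matches the paper's proof: differentiate $\eqref{NSV}_{2}$ in time, test with $u_{t}$, derive a separate differential identity for $\kappa^{2}\int|u-v|^{2}f$, add them, and run a weighted Gr\"onwall. However, your Young-inequality split of the cross term $2\kappa^{2}\int(u-v)\cdot u_{t}f$ is balanced in the wrong direction and this produces the ``$+\kappa$'' you record in $g_{2}$. That term is fatal: $\int_{1}^{T}\kappa\,{\rm d}t=\kappa(T-1)$ is not uniformly bounded, so the Gr\"onwall factor on $[1,T]$ becomes $e^{C\kappa T}$ and \eqref{extwds} does not close; even on $[0,1]$ the factor $e^{C\kappa}$ forces $C_{4}$ to depend on $\kappa$, contradicting the statement. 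The assumption \eqref{priori} gives you nothing against a bare constant multiple of $\kappa$.

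The fix (and this is exactly what the paper does, see \eqref{uvfkappa2}) is to split the cross term so that the $\kappa^{3}\int|u-v|^{2}f$ piece carries a coefficient no larger than $2$, allowing it to be absorbed entirely by the good-sign term $-2\kappa^{3}\int|u-v|^{2}f$ coming from the $\text{div}_{v}$ integration by parts; the companion piece $C\kappa\|\sqrt{n_{f}}u_{t}\|_{L^{2}}^{2}$ is then absorbed by the dissipation $\int\kappa n_{f}|u_{t}|^{2}$ already present on the left-hand side of the fluid identity (after rescaling the particle identity by a fixed small constant if needed). With this balance the coefficient $g_{2}(t)$ reduces to $C\mu^{-3}\|\nabla u\|_{L^{2}}^{4}$ (plus an $E_{0}$-multiple of $\mathcal{E}$ that sits inside $F$), which is genuinely integrable by \eqref{priori}. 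Your proposed $F(t)$ is also oversimplified: after inserting \eqref{mmmmm3} to eliminate $\|\nabla^{2}u\|_{L^{2}}$ one picks up the $\mu^{-8}\|\nabla u\|_{L^{2}}^{4}D(t)^{3}$, $E_{0}\mu^{-5}D(t)^{2}$ and $E_{0}\mathcal{E}(t)$ contributions that generate the exact prefactor $\mu^{-5}(\mu+\kappa)E_{0}M+\mu^{-5}E_{0}+\mu^{-1}\kappa+E_{0}+1$ in \eqref{twds}--\eqref{extwds}.
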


\begin{proof}
Applying $\partial_{t}$ to $\eqref{NSV}_{2}$, we have
\begin{equation}\label{rutt}
\rho (u_{tt}+ u\cdot\nabla u_{t})+\nabla P_{t}-\mu\Delta u_{t}+\kappa n_{f} u_{t}=-\rho_{t}u_{t}-(\rho u)_{t}\cdot\nabla u-\int_{\mathbb{R}^{3}}\kappa(u-v)f_{t} {\rm{d}}v.
\end{equation}
Multiplying \eqref{rutt} by $u_{t}$ and integrating by parts, we deduce that
\begin{equation}\label{311}
\begin{aligned}
&\frac{1}{2}\frac{{\rm{d}}}{{\rm{d}}t}\int_{\mathbb{R}^{3}}\rho|u_{t}|^{2}{\rm{d}}x+\int_{\mathbb{R}^{3}}\mu|\nabla u_{t}|^{2}{\rm{d}}x+\int_{\mathbb{R}^{3}}\kappa n_{f}|u_{t}|^{2}{\rm{d}}x\\
&=-\int_{\mathbb{R}^{3}} (\rho_{t}u_{t}+(\rho u)_{t}\cdot\nabla u)\cdot u_{t} {\rm{d}}x-\int_{\mathbb{R}^{3}\times\mathbb{R}^{3}} \kappa u_{t}\cdot(u-v)f_{t} {\rm{d}}v{\rm{d}}x\\
&:=I_{1}+I_{2}.
\end{aligned}
\end{equation}
Below, we first deal with the term $I_{1}$ on the right-hand side of \eqref{311}. Making use of $\eqref{NSV}_{1}$, we further decompose $I_{1}$ as
\begin{equation*}
\begin{aligned}
I_{1}&=2\int_{\mathbb{R}^{3}}\rho (u\cdot \nabla u_{t})\cdot u_{t}{\rm{d}}x+\int_{\mathbb{R}^{3}}\rho (u_{t}\cdot\nabla u)\cdot u_{t}{\rm{d}}x+\int_{\mathbb{R}^{3}}\rho u\cdot\nabla(u\cdot\nabla u\cdot u_{t}){\rm{d}}x\\
&:=I_{11}+I_{12}+I_{13}.
\end{aligned}
\end{equation*}
It follows from \eqref{rhoinfty}, the Gagliardo-Nirenberg inequality \eqref{GNSine} and the Sobolev embedding $\dot{H}^{1}\hookrightarrow L^{6}$ that
\begin{equation*}
\begin{aligned}
I_{11}&\leq 2\|\rho\|_{L^{\infty}}^{\frac{1}{2}}\|\sqrt{\rho} u_{t}\|_{L^{3}}\|u\|_{L^{6}}\|\nabla u_{t}\|_{L^{2}}\\
&\leq C\|\rho_{0}\|_{L^{\infty}}^{\frac{3}{4}} \|\sqrt{\rho}u_{t}\|_{L^{2}}^\frac{1}{2}\|u_{t}\|_{L^{6}}^\frac{1}{2}\|\nabla u\|_{L^{2}}\|\nabla u_{t}\|_{L^{2}}\\
&\leq C\|\sqrt{\rho}u_{t}\|_{L^{2}}^{\frac{1}{2}}\|\nabla u_{t}\|_{L^{2}}^{\frac{3}{2}}\|\nabla u\|_{L^{2}}\\
&\leq \frac{\mu}{10}\|\nabla u_{t}\|_{L^{2}}^{2}+C\mu^{-3}\|\nabla u\|_{L^{2}}^{4}\|\sqrt{\rho}u_{t}\|_{L^{2}}^{2}.
\end{aligned}
\end{equation*}
Similarly, one has
\begin{equation*}
\begin{aligned}
I_{12}&\leq \|\rho\|_{L^{\infty}}^{\frac{1}{2}}\|\sqrt{\rho} u_{t}\|_{L^{3}}\|\nabla u\|_{L^{2}}\|u_{t}\|_{L^{6}}\\
&\leq \frac{\mu}{10}\|\nabla u_{t}\|_{L^{2}}^{2}+C\mu^{-3}\|\nabla u\|_{L^{2}}^{4}\|\sqrt{\rho}u_{t}\|_{L^{2}}^{2},
\end{aligned}
\end{equation*}
and
\begin{equation*}
\begin{aligned}
I_{13}&\leq \|\rho\|_{L^{\infty}}\|u\|_{L^{6}}\|\nabla(u\cdot\nabla u\cdot u_{t})\|_{L^\frac{6}{5}}\\
&\leq C\|\rho_{0}\|_{L^{\infty}}\|\nabla u\|_{L^{2}}^{2}\|\nabla u\|_{L^{6}}\|u_{t}\|_{L^{6}}\\
&\quad+C\|\rho_{0}\|_{L^{\infty}}\|\nabla u\|_{L^{2}}\|u\|_{L^{6}}\|\nabla^{2}u\|_{L^{2}}\|u_{t}\|_{L^{6}}\\
&\quad+C\|\rho_{0}\|_{L^{\infty}}\|\nabla u\|_{L^{2}}\|u\|_{L^{6}}\|\nabla u\|_{L^{6}}\|\nabla u_{t}\|_{L^{2}}\\
&\leq C\|\nabla u\|_{L^{2}}^{2}\|\nabla^{2}u\|_{L^{2}}\|\nabla u_{t}\|_{L^{2}}\\
&\leq \frac{\mu}{10}\|\nabla u_{t}(t)\|_{L^{2}}^{2}+C\mu^{-1}\|\nabla u(t)\|_{L^{2}}^{4}\|\nabla^{2}u(t)\|_{L^{2}}^{2}.
\end{aligned}
\end{equation*}
In addition, with the help of $\eqref{NSV}_{4}$, the second term on the right-hand side of \eqref{311} can be estimated by
\begin{equation}\label{313}
\begin{aligned}
I_{2}&=-\int_{\mathbb{R}^{3}\times\mathbb{R}^{3}} u_{t}\cdot(u-v)\big(v\cdot\nabla_{x}f+{\rm{div}}
_{v}(\kappa(u-v)f)\big){\rm{d}}v{\rm{d}}x\\
&\leq \int_{\mathbb{R}^{3}\times\mathbb{R}^{3}} u_{t}\cdot  (v\cdot\nabla u)f {\rm{d}}v{\rm{d}}x+\int_{\mathbb{R}^{3}\times\mathbb{R}^{3}} (v\cdot \nabla u_{t})\cdot (u-v) f{\rm{d}}v{\rm{d}}x+\int_{\mathbb{R}^{3}\times\mathbb{R}^{3}} u_{t}\cdot \kappa(u-v)f{\rm{d}}v{\rm{d}}x\\&:=I_{21}+I_{22}+I_{23}.
\end{aligned}
\end{equation}
One deduces from \eqref{nf} and \eqref{enrgineq} that
\begin{equation*}
\|j_{f}\|_{L^{2}}\leq\|n_{f}\|_{L^{\infty}}^\frac{1}{2}\left(\int_{\mathbb{R}^{3}\times\mathbb{R}^{3}}|v|^{2}f  {\rm{d}}v{\rm{d}}x\right)^\frac{1}{2}\leq 2\|f_{0}\|_{L^{1}_{v}(L^{\infty}_{x})}^\frac{1}{2}E_{0}^\frac{1}{2},
\end{equation*}
and hence it yields
\begin{equation*}
\begin{aligned}
I_{21}&\leq \|j_{f}\|_{L^{2}} \|u_{t}\|_{L^{6}}\|\nabla u\|_{L^{3}}\\
&\leq C\|f_{0}\|_{L^{1}_{v}(L^{\infty}_{x})}^{\frac{1}{2}}E_{0}^{\frac{1}{2}}\|\nabla u_{t}\|_{L^{2}}\|\nabla u\|_{L^{2}}^{\frac{1}{2}} \|\nabla^{2} u\|_{L^{2}}^{\frac{1}{2}}\\
&\leq \frac{\mu}{10} \|\nabla u_{t}\|_{L^{2}}^{2}+C E_{0}\mu^{-1}\|\nabla u\|_{L^{2}} \|\nabla^{2}u\|_{L^{2}}.
\end{aligned}
\end{equation*}
Note that the estimate \eqref{ef} ensures that
\begin{equation*}
\begin{aligned}
I_{22}&\leq \|\nabla u_{t}\|_{L^{2}}\|e_{f}\|_{L^{\infty}}^\frac{1}{2}\left(\int_{\mathbb{R}^{3}\times\mathbb{R}^{3}}|u-v|^{2}f{\rm{d}}v{\rm{d}}x\right)^\frac{1}{2}\\
&\leq \frac{\mu}{10} \|\nabla u_{t}\|_{L^{2}}^{2}+C\mu^{-1} \int_{\mathbb{R}^{3}\times\mathbb{R}^{3}}|u-v|^{2}f{\rm{d}}v{\rm{d}}x.
\end{aligned}
\end{equation*}
And it is direct to get
\begin{equation*}
\begin{aligned}
I_{23}&\leq \kappa\|\sqrt{n_{f}}u_{t}\|_{L^{2}}\left(\int_{\mathbb{R}^{3}\times\mathbb{R}^{3}}|u-v|^{2}f{\rm{d}}v{\rm{d}}x\right)^\frac{1}{2}\\
&\leq \frac{\kappa}{4}\|\sqrt{n_{f}}u_{t}\|_{L^{2}}^{2}+C\kappa \int_{\mathbb{R}^{3}\times\mathbb{R}^{3}}|u-v|^{2}f{\rm{d}}v{\rm{d}}x.
\end{aligned}
\end{equation*}
Substituting the above estimates of $I_{1}$ and $I_{2}$ into \eqref{311}, we derive
\begin{equation}\label{3111}
\begin{aligned}
&\frac{{\rm{d}}}{{\rm{d}}t}\int_{\mathbb{R}^{3}}\rho|u_{t}|^{2}{\rm{d}}x+\int_{\mathbb{R}^{3}}\mu|\nabla u_{t}|^{2}{\rm{d}}x+\int_{\mathbb{R}^{3}}\kappa n_{f}|u_{t}|^{2}{\rm{d}}x\\
&\leq C\mu^{-3}\|\nabla u\|_{L^{2}}^{4}\mathcal{E}(t)+C\mu^{-1}\|\nabla u\|_{L^{2}}^{4}\|\nabla^{2}u\|_{L^{2}}^{2}\\
&\quad+C E_{0} \mu^{-1}\|\nabla u\|_{L^{2}} \|\nabla^{2}u\|_{L^{2}}+C(\mu^{-1}+1)D(t).
\end{aligned}
\end{equation}
Here owing to \eqref{mmmmm3}, the second term on the right-hand side of \eqref{3111} can be controlled by
\begin{equation}\label{3111000}
C\mu^{-1}\|\nabla u\|_{L^{2}}^{4}\|\nabla^{2}u\|_{L^{2}}^{2}\leq C \mu^{-3}\|\nabla u\|_{L^{2}}^{4}\Big(\mathcal{E}(t)+\mu^{-5} D(t)^{3} \Big),
\end{equation}
and the third term has the following estimate
\begin{equation}\label{3111001}
\begin{aligned}
C E_{0} \mu^{-1}\|\nabla u\|_{L^{2}} \|\nabla^{2}u\|_{L^{2}}&\leq C E_{0} \mu^{-\frac{5}{2}}\sqrt{D(t)} \sqrt{\mathcal{E}(t)+\mu^{-5}D(t)^{3}}\\
&\leq C E_{0} \Big(\mathcal{E}(t)+ \mu^{-5} D(t)+\mu^{-5}D(t)^{2} \Big).
\end{aligned}
\end{equation}
Moreover, due to \eqref{ef} and the Young inequality, it holds
\begin{equation}\label{uvfkappa2}
\begin{aligned}
&\frac{{\rm{d}}}{{\rm{d}}t}\int_{\mathbb{R}^{3}\times\mathbb{R}^{3}}\kappa^{2}|u-v|^{2}f{\rm{d}}v{\rm{d}}x+2 \int_{\mathbb{R}^{3}\times\mathbb{R}^{3}} \kappa^{3}|u-v|^{2}f{\rm{d}}v{\rm{d}}x\\
&=2\int_{\mathbb{R}^{3}\times\mathbb{R}^{3}}\kappa^{2} u_{t}\cdot(u-v)f{\rm{d}}v{\rm{d}}x+2\int_{\mathbb{R}^{3}\times\mathbb{R}^{3}} \kappa^{2}v\cdot\nabla u\cdot(u-v)f{\rm{d}}v{\rm{d}}x\\
&\leq 2\kappa^{2} \|\sqrt{n_{f}}u_{t}\|_{L^{2}} \left(\int_{\mathbb{R}^{3}\times\mathbb{R}^{3}} |u-v|^{2}f{\rm{d}}v{\rm{d}}x\right)^{\frac{1}{2}}\\
&\quad+2\kappa^{2} \|e_{f}\|_{L^{\infty}}^{\frac{1}{2}}\|\nabla u\|_{L^{2}} \left( \int_{\mathbb{R}^{3}\times\mathbb{R}^{3}} |u-v|^{2}f{\rm{d}}v{\rm{d}}x\right)^{\frac{1}{2}}\\
&\leq 2\kappa\|\sqrt{n_{f}}u_{t}\|_{L^{2}}^{2}+2\kappa^{3}\int_{\mathbb{R}^{3}\times\mathbb{R}^{3}}  |u-v|^{2}f{\rm{d}}v{\rm{d}}x+C\mu^{-1}\kappa D(t).
\end{aligned}
\end{equation}
Plugging \eqref{3111000} and \eqref{3111001} into \eqref{3111}, together with \eqref{uvfkappa2}, we obtain that
\begin{equation}\label{ddtut}
\frac{{\rm{d}}}{{\rm{d}}t}\mathcal{E}(t)+\int_{\mathbb{R}^{3}}\mu|\nabla u_{t}|^{2}{\rm{d}}x+\int_{\mathbb{R}^{3}} \kappa n_{f}|u_{t}|^{2}{\rm{d}}x\leq C\mu^{-3}\|\nabla u\|_{L^{2}}^{4}\mathcal{E}(t)+C F(t),
\end{equation}
where
\begin{equation*}
F(t):=\mu^{-3}\|\nabla u\|_{L^{2}}^{4}\mu^{-5}D(t)^{3}+ E_{0}\mathcal{E}(t)+E_{0}\mu^{-5}D(t)^{2}+(1+\mu^{-1}\kappa +E_{0}\mu^{-5})D(t).
\end{equation*}
Multiplying \eqref{ddtut} by $t^{1+\beta_{2}}$ with $\beta_{2}\in[0,1]$ and $t\in[0,1]$, we get
\begin{equation}\label{314}
\begin{aligned}
&\frac{{\rm{d}}}{{\rm{d}}t}\Big(t^{1+\beta_{2}}\mathcal{E}(t)\Big)+t^{1+\beta_{2}} \left(\mu\|\nabla u_{t}\|_{L^{2}}^{2}+ \kappa \|\sqrt{n_{f}}u_{t}\|_{L^{2}}^{2}\right)\\
&\leq (1+\beta_{2}) t^{\beta_{2}} \mathcal{E}(t)+C\mu^{-3}\|\nabla u\|_{L^{2}}^{4} t^{1+\beta_{2}}\mathcal{E}(t)+C t^{1+\beta_{2}} F(t).
\end{aligned}
\end{equation}
Here the first term on the right-hand side of \eqref{314} corresponds to the dissipation in \eqref{dsin}, so there exists a sequence $t_{k}\rightarrow 0$ such that $\displaystyle\lim_{t_{k}\rightarrow 0}  t_k^{1+\beta_{2}} \mathcal{E}(t)=0$. By using the Gr\"onwall inequality to \eqref{314} with $t\in[t_{k},1]$ and then letting $t_{k}\rightarrow0$, we have
\begin{equation}\label{315}
\begin{aligned}
&\sup_{t\in(0,1]}t^{1+\beta_{2}}\mathcal{E}(t)+\int_{0}^{1}
t^{1+\beta_{2}}\left(\mu\|\nabla u_{t}\|_{L^{2}}^{2}+ \kappa \|\sqrt{n_{f}}u_{t}\|_{L^{2}}^{2}\right){\rm{d}}t\\
&\leq C\exp{\left(\int_{0}^{1}\mu^{-3}\|\nabla u\|_{L^{2}}^{4}{\rm{d}}t\right)} \Big( (1+\beta_{2}) \int_{0}^{1} t^{\beta_{2}} \mathcal{E}(t){\rm{d}}t+ \int_{0}^{1} t^{1+\beta_{2}}F(t){\rm{d}}t\Big).
\end{aligned}
\end{equation}
Thanks to \eqref{priori} and \eqref{dsin} with $\beta_{1}=\frac{1+\beta_{2}}{3}$ or $\beta_{1}=\beta_{2}$, it follows that
\begin{equation}\label{mmmmmmm}
\begin{aligned}
&\int_{0}^{1} t^{1+\beta_{2}}F(t){\rm{d}}t\\
&\leq \mu^{-5} \sup_{t\in(0,1]}\Big( t^{\frac{1+\beta_{2}}{3}} D(t) \Big)^{3}  \int_{0}^{1} \mu^{-3}\|\nabla u\|_{L^{2}}^{4}{\rm{d}}t+E_{0} \int_{0}^{1} t^{\beta_{2}} \mathcal{E}(t){\rm{d}}t\\
&\quad+E_{0}\mu^{-5}\sup_{t\in(0,1]} D(t)\sup_{t\in(0,1]} \Big(t^{\beta_{2}}D(t)\Big)+(1+\mu^{-1}\kappa +E_{0}\mu^{-5}) \sup_{t\in(0,1]} \Big(t^{\beta_{2}} D(t)\Big)\\
&\leq C\Big(\mu^{-5}(\mu+\kappa)E_{0} M +E_{0}+1+\mu^{-1}\kappa +\mu^{-5}E_{0}\Big) (\mu+\kappa)^{1-\beta_{2}} E_{0}^{\beta_{2}} M^{1-\beta_{2}}.
\end{aligned}
\end{equation}
By virtue of \eqref{priori}, \eqref{315} and \eqref{mmmmmmm}, we obtain the estimate \eqref{twds}.

Furthermore, multiplying \eqref{ddtut} by the exponential weight $e^{\alpha_{1} t}$ with $t\in[1,T]$ yields
\begin{equation}\label{3141}
\begin{aligned}
&\frac{{\rm{d}}}{{\rm{d}}t}\Big(e^{\alpha_{1} t}\mathcal{E}(t)\Big)+e^{\alpha_{1} t}\left(\mu\|\nabla u_{t}\|_{L^{2}}^{2}+ \kappa \|\sqrt{n_{f}}u_{t}\|_{L^{2}}^{2}\right)\\
&\leq \alpha_{1} e^{\alpha_{1} t}\mathcal{E}(t)+C\mu^{-3}\|\nabla u\|_{L^{2}}^{4}e^{\alpha_{1} t}\mathcal{E}(t)+Ce^{\alpha_{1} t}F(t).
\end{aligned}
\end{equation}
A similar computation as in \eqref{mmmmmmm}, one deduces from \eqref{dsin0}, \eqref{eptd} and \eqref{exdsin} that
\begin{equation*}
\begin{aligned}
\int_{1}^{T}e^{\alpha_{1} t}F(t){\rm{d}}t&\leq C\mu^{-5} \sup_{t\in[1,T]} D(t)\sup_{t\in[1,T]}\Big( e^{\alpha_{1} t} D(t) \Big)^{2} \int_{1}^{T} \mu^{-3}\|\nabla u\|_{L^{2}}^{4}{\rm{d}}t\\
&\quad+E_{0} \int_{1}^{T} e^{\alpha_{1} t} \mathcal{E}(t){\rm{d}}t+CE_{0}\mu^{-5} \sup_{t\in[1,T]} D(t)\int_{1}^{T}e^{\alpha_{1}t} D(t){\rm{d}}t\\
&\quad+(1+\mu^{-1}\kappa +E_{0}\mu^{-5})\int_{1}^{T} e^{\alpha_{1} t} D(t){\rm{d}}t\\
&\leq C\Big( \mu^{-5}(\mu+\kappa)E_{0}M +E_{0}+1+\mu^{-1}\kappa +\mu^{-5}E_{0}\Big)E_{0}.
\end{aligned}
\end{equation*}
Hence, it holds by the Gr\"{o}nwall inequality to \eqref{3141} on $[1,T]$, \eqref{priori} and \eqref{twds} with $t=\beta_{2}=1$ that
\begin{equation}
\begin{aligned}
&\sup_{t\in[1,T]}e^{\alpha_{1} t}\mathcal{E}(t)+\int_{1}^T
e^{\alpha_{1} t}\left(\mu\|\nabla u_{t}\|_{L^{2}}^{2}+ \kappa \|\sqrt{n_{f}}u_{t}\|_{L^{2}}^{2}\right){\rm{d}}t\\
&\leq C\exp{\left(\int_{0}^{1}\mu^{-3}\|\nabla u\|_{L^{2}}^{4}{\rm{d}}t\right)} \left( \mathcal{E}(1)+\int_{1}^{T}e^{\alpha_{1}t}(\alpha_{1}\mathcal{E}(t)+F(t)){\rm{d}}t\right)\\
&\leq C\Big(\mu^{-5}(\mu+\kappa)E_{0}M +E_{0}+1+\mu^{-1}\kappa +\mu^{-5}E_{0}\Big)E_{0}.
\end{aligned}
\end{equation}
This completes the proof of Lemma \ref{lem5}.
\end{proof}

With the help of Lemmas \ref{lem3}--\ref{lem5}, we are ready to prove Proposition \ref{prop3}.

\begin{proof}[Proof of Proposition {\rm\ref{prop3}}]
Assume that the initial data $(\rho_{0},u_{0},f_{0})$ satisfies \eqref{indacd}. For any given time $T>0$, let $(\rho,u,f)$ be the smooth solution to the Cauchy problem \eqref{NSV}--\eqref{inda} satisfying \eqref{priori}. Our goal is to justify the a-priori estimate \eqref{clspri}. First, by Lemmas \ref{lem3} and \ref{lem4}, there exists a constant $\widetilde{C}_{1}=\widetilde{C}_{1}(\|\rho_{0}\|_{L^{\infty}},\|f_{0}\|_{L^{1}_{v}(L^{\infty}_{x})})$ such that
\begin{equation}\label{uniformL4}
\begin{aligned}
\int_{0}^T\mu^{-3}\|\nabla u\|_{L^{2}}^{4}{\rm{d}}t
&\leq \mu^{-5}\sup_{t\in(0,T]}\left(\mu\|\nabla u\|_{L^{2}}^{2}\right)\int_{0}^{T}\mu\|\nabla u\|_{L^{2}}^{2}{\rm{d}}t \\
&\leq \widetilde{C}_{1}M(\mu^{-4} +\mu^{-5}\kappa)E_{0}\\&<\frac{1}{6},
\end{aligned}
\end{equation}
provided that
\begin{equation*}
E_{0}<\min\{\mu^{4},\,\mu^{5}\} \kappa^{-1}\varepsilon_{1}~~\text{with}~~  \varepsilon_{1}:=\frac{1}{12\widetilde{C}_{1}M}.
\end{equation*}

Next, we establish the $L^{1}(0,T;L^{\infty})$-norm of $u$. It follows from the Gagliardo-Nirenberg inequality that
\begin{equation*}
\int_{0}^T\kappa\|u\|_{L^{\infty}}{\rm{d}}t\leq C\kappa\int_{0}^{T}\|\nabla u\|_{L^{2}}^{\frac{1}{2}}\|\nabla^{2} u\|_{L^{2}}^{\frac{1}{2}}{\rm{d}}t.
\end{equation*}
For the short time $T\leq 1$, thanks to \eqref{dsin} with $\beta_{1}=1$, we get
\begin{equation*}
\begin{aligned}
\int_{0}^{T}\|\nabla u\|_{L^{2}}^{\frac{1}{2}}\|\nabla^{2} u\|_{L^{2}}^{\frac{1}{2}}{\rm{d}}t&\leq C\mu^{-\frac{3}{4}}\sup_{t\in(0,1]}\Big(t\mu\|\nabla u\|_{L^{2}}^{2}\Big)^\frac{1}{4}\left(\int_{0}^{1}t\mu^{2}\|\nabla^{2}u\|_{L^{2}}^{2}{\rm{d}}t\right)^\frac{1}{4}\left(\int_{0}^{1}t^{-\frac{2}{3}}{\rm{d}}t\right)^\frac{3}{4}\\
&\leq C\mu^{-\frac{3}{4}}E_{0}^\frac{1}{2}.
\end{aligned}
\end{equation*}
As for the large time $T\geq1$, the time-integral on $(0,1]$ can be handled similarly as above, and on the rest part $[1,T]$, one deduces from \eqref{exdsin} that
\begin{equation*}
\begin{aligned}
&\int_{1}^{T}\|\nabla u\|_{L^{2}}^{\frac{1}{2}}\|\nabla^{2} u\|_{L^{2}}^{\frac{1}{2}}{\rm{d}}t\\
&\leq C\mu^{-\frac{3}{4}}\sup_{t\in[1,T]}\Big(e^{\alpha_{1}t}\mu\|\nabla u\|_{L^{2}}^{2}\Big)^\frac{1}{4}\left(\int_{1}^Te^{\alpha_{1}t}\mu^{2}\|\nabla^{2}u\|_{L^{2}}^{2}{\rm{d}}t\right)^\frac{1}{4}\left(\int_{1}^Te^{-\frac{2}{3}\alpha_{1}t}{\rm{d}}t\right)^\frac{3}{4}\\
&\leq C\alpha_{1}^{-\frac{3}{4}}\mu^{-\frac{3}{4}}E_{0}^\frac{1}{2}\\
&\leq C(\mu^{-\frac{3}{4}}+\mu^{-\frac{3}{2}}) E_{0}^\frac{1}{2},
\end{aligned}
\end{equation*}
where we used the fact from \eqref{alpha1} that $\alpha_{1}^{-1}\leq C(1+\mu^{-1})$.  It thus holds for some constant $\widetilde{C}_{2}=\widetilde{C}_{2}(\|\rho_{0}\|_{L^{\frac{3}{2}}\cap L^{\infty}},\|f_{0}\|_{L^{1}_{x,v}\cap L^{1}_{v}(L^{\infty}_{x})}))$ that
\begin{equation}\label{uL1infty}
\int_{0}^T\kappa\|u\|_{L^{\infty}}{\rm{d}}t\leq \widetilde{C}_{2} \kappa (\mu^{-\frac{3}{4}}+\mu^{-\frac{3}{2}})E_{0}^\frac{1}{2}<\frac{1}{6},
\end{equation}
provided that
\begin{equation*}
E_{0}<\min\{\mu^\frac{3}{2}, \mu^{3}\} \kappa^{-2}\varepsilon_{2}~~\text{with}~~ \varepsilon_{2}:=\frac{1}{144\widetilde{C}_{2}^{2}}.
\end{equation*}

Finally, it suffices to control the $L^{1}(0,T;L^{\infty})$-norm of $\nabla u$. It can be derived from the Gagliardo-Nirenberg inequality \eqref{GNSine} that
\begin{equation}\label{nuif}
\|\nabla u\|_{L^{\infty}}\leq C\|\nabla u\|_{L^{2}}^{\frac{1}{4}}\|\nabla^{2} u\|_{L^{6}}^{\frac{3}{4}},
\end{equation}
which requires the estimate of $\|\nabla^{2} u\|_{L^{6}}$. To that matter, we employ \eqref{re2} to $\eqref{NSV}_{2}$, making use of the bounds \eqref{rhoinfty}--\eqref{jf}, and then get
\begin{equation*}
\begin{aligned}
&\mu\|\nabla^{2}u\|_{L^{6}}+\|\nabla P\|_{L^{6}}\\
&\leq C\Big\|\rho u_{t}+\rho u\cdot\nabla u+\int_{\mathbb{R}^{3}}\kappa(u-v)f{\rm{d}}v\Big\|_{L^{6}}\\
&\leq C\|\rho\|_{L^{\infty}} \|u_{t}\|_{L^{6}}+C\|\rho\|_{L^{\infty}}\|u\|_{L^{6}}\|\nabla u\|_{L^{\infty}}+C\kappa \Big\| \int_{\mathbb{R}^{3}}(u-v)f{\rm{d}}v\Big\|_{L^{2}}^{\frac{1}{3}} \Big\| \int_{\mathbb{R}^{3}}(u-v)f{\rm{d}}v\Big\|_{L^{\infty}}^{\frac{2}{3}}\\
&\leq C\|\rho_{0}\|_{L^{\infty}} \|\nabla u_{t}\|_{L^{2}}+C\|\rho_{0}\|_{L^{\infty}}\|\nabla u\|_{L^{2}}^{\frac{5}{4}} \|\nabla^{2} u\|_{L^{6}}^{\frac{3}{4}}\\
&\quad+C\kappa \|f_{0}\|_{L^{1}_{v}(L^{\infty}_{x})}^{\frac{1}{6}} \left(\int_{\mathbb{R}^{3}\times\mathbb{R}^{3}} |u-v|^{2}f{\rm{d}}v{\rm{d}}x\right)^{\frac{1}{6}} \left(\|u\|_{L^{\infty}} \|f_{0}\|_{L^{1}_{v}(L^{\infty}_{x})}+\| (1+|v|)f_{0}\|_{L^{1}_{v}(L^{\infty}_{x})}\right)^{\frac{2}{3}}\\
&\leq \frac{\mu}{2}\|\nabla^{2}u\|_{L^{6}}+C\|\nabla u_{t}\|_{L^{2}}+C\mu^{-3}\|\nabla u\|_{L^{2}}^{5}+C\kappa(1+\|u\|_{L^{\infty}}) \left(\int_{\mathbb{R}^{3}\times\mathbb{R}^{3}} |u-v|^{2}f{\rm{d}}v{\rm{d}}x\right)^{\frac{1}{6}},
\end{aligned}
\end{equation*}
from which we infer
\begin{equation}\label{dfgg}
\begin{aligned}
&\|\nabla^{2}u\|_{L^{6}}+\mu^{-1}\|\nabla P\|_{L^{6}}\\
&\leq C\mu^{-1}\|\nabla u_{t}\|_{L^{2}}+C\mu^{-\frac{13}{2}}D(t)^{\frac{5}{2}}+C\mu^{-1}\kappa^\frac{5}{6}(1+\|u\|_{L^{\infty}}) D(t)^{\frac{1}{6}}.
\end{aligned}
\end{equation}
Then, it follows from \eqref{nuif} and \eqref{dfgg} that
\begin{equation}\label{nablauinfty}
\begin{aligned}
\|\nabla u\|_{L^{\infty}}
&\leq C\mu^{-\frac{5}{4}}D(t)^{\frac{1}{8}}\left(\mu \|\nabla u_{t}\|_{L^{2}}^{2}\right)^\frac{3}{8}+C\mu^{-5}D(t)^{2}\\
&\quad+C\mu^{-\frac{7}{8}}\kappa^{-\frac{1}{8}}(1+\|u\|_{L^{\infty}}^\frac{3}{4})D(t)^{\frac{1}{4}}.
\end{aligned}
\end{equation}
By virtue of \eqref{nablauinfty}, it can be derived from \eqref{priori}, \eqref{eptd}, \eqref{dsin} and \eqref{twds}  that for $T\leq 1$,
\begin{equation*}
\begin{aligned}
\int_{0}^{T}\|\nabla u\|_{L^{\infty}}{\rm{d}}t&\leq C\mu^{-\frac{5}{4}}\sup_{t\in(0,1]}\Big(tD(t)\Big)^{\frac{1}{8}}\left(\int_{0}^{1}t\mu\|\nabla u_{t}\|_{L^{2}}^{2} {\rm{d}}t\right)^{\frac{3}{8}}\left(\int_{0}^{1}t^{-\frac{4}{5}}{\rm{d}}t\right)^{\frac{5}{8}}\\
&\quad+C\mu^{-5}\sup_{t\in(0,T]} D(t)\int_{0}^{T}D(t){\rm{d}}t+C\mu^{-\frac{7}{8}}\kappa^{\frac{5}{8}}\left(\int_{0}^{1}D(t){\rm{d}}t\right)^\frac{1}{4} \\
&\quad+C\mu^{-\frac{7}{8}}\kappa^{-\frac{1}{8}}\left(\int_{0}^{1} \kappa \|u\|_{L^{\infty}}{\rm{d}}t\right)^{\frac{3}{4}} \left(\int_{0}^{1} D(t){\rm{d}}t\right)^{\frac{1}{4}} \\
&\leq C \mu^{-\frac{5}{4}}(\mu+\kappa)^{\frac{3}{8}}\Big(\mu^{-5}(\mu+\kappa)E_{0} M+\mu^{-5}E_{0} +\mu^{-1}\kappa+E_{0}+1 \Big)^\frac{3}{8}E_{0}^{\frac{1}{8}}M^\frac{3}{8}\\
&\quad+C\mu^{-5}(\mu+\kappa)E_{0}M+C\mu^{-\frac{7}{8}}\kappa^\frac{5}{8}E_{0}^\frac{1}{4}.
\end{aligned}
\end{equation*}
On the other hand, due to the large-time estimates \eqref{exdsin} and \eqref{extwds}, it also holds for $T\geq 1$ that
\begin{equation*}
\begin{aligned}
\int_{1}^T \|\nabla u\|_{L^{\infty}}{\rm{d}}t&\leq C\mu^{-\frac{5}{4}}\sup_{t\in[1,T]}\left(e^{\alpha_{1}t}D(t)\right)^{\frac{1}{8}}\left(\int_{1}^Te^{\alpha_{1}t}\mu\|\nabla u_{t}\|_{L^{2}}^{2} {\rm{d}}t\right)^{\frac{3}{8}}\left(\int_{1}^Te^{-\frac{4}{5}\alpha_{1}t}{\rm{d}}t\right)^{\frac{5}{8}}\\
&\quad+C\mu^{-5}\sup_{t\in[1,T]} D(t)\int_{1}^{T}e^{\alpha_{1}t} D(t){\rm{d}}t\\
&\quad+C\mu^{-\frac{7}{8}}\kappa^{\frac{5}{8}}\left(\int_{1}^{T}e^{\alpha_{1} t}D(t){\rm{d}}t\right)^{\frac{1}{4}}\left(\int_{1}^{T}e^{-\frac{1}{3}\alpha_{1}t}{\rm{d}}t\right)^{\frac{3}{4}}\\
&\quad+C\mu^{-\frac{7}{8}}\kappa^{-\frac{1}{8}}\left(\int_{1}^{T} \kappa \|u\|_{L^{\infty}}{\rm{d}}t\right)^{\frac{3}{4}} \left(\int_{1}^T D(t){\rm{d}}t\right)^{\frac{1}{4}} \\
&\leq C\mu^{-\frac{5}{4}}(1+\mu^{-1})^{\frac{5}{8}}\Big(\mu^{-5}(\mu+\kappa)E_{0} M+\mu^{-5}E_{0} +\mu^{-1}\kappa+E_{0}+1 \Big)^\frac{3}{8}E_{0}^{\frac{1}{2}}\\
&\quad+C\mu^{-5}(\mu+\kappa)E_{0}M+C\mu^{-\frac{7}{8}}(1+\mu^{-1})^\frac{3}{4}\kappa^\frac{5}{8}E_{0}^\frac{1}{4}.
\end{aligned}
\end{equation*}
To sum up, noticing that $\kappa\geq 1$, it holds that
\begin{equation}\label{nablauinfty11}
\begin{aligned}
&\int_{0}^{T}20\kappa\|\nabla u\|_{L^{\infty}}{\rm{d}}t\\
&\leq \widetilde{C}_{3}\kappa^{2}\Big\{(\mu^{-\frac{7}{8}}+\mu^{-\frac{13}{8}})E_{0}^\frac{1}{8}+\mu^{-\frac{7}{8}}E_{0}^\frac{1}{4}+(\mu^{-\frac{7}{8}}+\mu^{-\frac{25}{8}})E_{0}^\frac{1}{2}\\
&\qquad\qquad+(\mu^{-\frac{5}{4}}+\mu^{-\frac{15}{4}})E_{0}^\frac{7}{8}+(\mu^{-5}+\mu^{-4})E_{0}\Big\}<\frac{1}{6},
\end{aligned}
\end{equation}
for some constant $\widetilde{C}_{3}=\widetilde{C}_{3}(M, \|\rho_{0}\|_{L^\frac{3}{2}\cap L^{\infty}}, \|f_{0}\|_{L^{1}_{x,v}}, \|(1+|v|^{2}) f_{0}\|_{L_{v}^{1}(L_{x}^{\infty})})$. Thus, we let the following five quantities:
\begin{equation*}
\begin{aligned}
&\Big(\mu^{-\frac{7}{8}}+\mu^{-\frac{13}{8}}\Big)E_{0}^\frac{1}{8},\quad\mu^{-\frac{7}{8}}E_{0}^\frac{1}{4},\quad\Big(\mu^{-\frac{7}{8}}+\mu^{-\frac{25}{8}}\Big)E_{0}^\frac{1}{2},\\
&\Big(\mu^{-\frac{5}{4}}+\mu^{-\frac{15}{4}}\Big)E_{0}^\frac{7}{8},\quad\Big(\mu^{-5}+\mu^{-4}\Big)E_{0}
\end{aligned}
\end{equation*}
be less than $1/(30\widetilde{C}_{3}\kappa^{2})$, i.e., there exists a small constant $\varepsilon_{3}>0$ only depending on $\widetilde{C}_{3}$ such that
\begin{equation*}
E_{0}<\min\{\mu^\frac{10}{7},\,\mu^{13}\}\kappa^{-16}\varepsilon_{3}.
\end{equation*}

Therefore, combining the above estimates \eqref{uniformL4},  \eqref{uL1infty} and \eqref{nablauinfty11} and taking $E_{0}$ satisfying \eqref{inenrgcd} with $\varepsilon_{0}:=\min\{\varepsilon_{1},\varepsilon_{2},\varepsilon_{3}\}$, we end up with \eqref{clspri} and complete the proof of Proposition \ref{prop3}.
\end{proof}

\subsection{Large-time behavior of the distribution function}

We further capture the concentration phenomena of the distribution function $f$.

\begin{lemma}\label{lem6}
Let $T>0$ and $(\rho,u,f)$ be the smooth solution to \eqref{NSV} for $t\in(0,T]$ with the initial data $(\rho_{0},u_{0},f_{0})$ satisfying \eqref{indacd}. Then under the condition \eqref{priori}, it holds for $\alpha_{2}=\min\left\{\frac{\kappa}{2},\frac{\alpha_{1}}{4}\right\}$ that
\begin{equation}\label{Sp1}
\left\{
\begin{aligned}
&\|j_{f}(t)\|_{L^{\infty}}\leq C_{5} e^{-\alpha_{2} t}\left(\||v|{f_{0}}\|_{L^{1}_{v}(L^{\infty}_{x})}+E_{0}^{\frac{1}{2}}\right),\\
&\|e_{f}(t)\|_{L^{\infty}}\leq C_{5}e^{-\alpha_{2} t}\Big(\||v|^{2}f_{0}\|_{L^{1}_{v}(L^{\infty}_{x})}+ E_{0}^\frac{1}{2}\Big),\\
&{\rm{Supp}}_{v}f(t,x,v)\subset \Big\{v\in\mathbb{R}^{3}~|~|v|\leq C_{5} e^{-\alpha_{2} t}\left(R_{0}+E_{0}^{\frac{1}{2}}\right)\Big\},
\end{aligned}
\right.
\end{equation}
where $(t,x)\in(0,T]\times\mathbb{R}^{3}$, and as $T\rightarrow +\infty$, there exists a profile $n^{*}$ such that
\begin{equation}\label{sptf}
W_{1}(f(t),n^{*} \otimes\delta_{v})+W_{1}(n_{f}(t),n^{*})\leq C_{5} e^{-\alpha_{1} t}E^{\frac{1}{2}}_{0}.
\end{equation}
Here $E_{0}$ and $\alpha_{1}$ are given by \eqref{inenrg} and \eqref{alpha1}, respectively, and $C_{5}>0$ is a constant depending on $\mu$, $\kappa$, $M$, $\|\rho_{0}\|_{L^{\frac{3}{2}}\cap L^{\infty}}$ and $\|f_{0}\|_{L^{1}_{x,v}\cap L^{1}_{v}(L^{\infty}_{x})}$.
\end{lemma}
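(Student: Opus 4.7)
The plan is to derive all the bounds of \eqref{Sp1} and \eqref{sptf} from the forward characteristic formula \eqref{chrcrv}, the change-of-variable identity for $\Gamma_{t,x}$ set up in Lemma \ref{lem2}, and the exponential decay of the energy already established in Lemmas \ref{lem3}--\ref{lem5}. The key preparatory step is to upgrade the decay of $u$ to an $L^{\infty}$ bound. By Gagliardo--Nirenberg, $\|u(s)\|_{L^{\infty}}\lesssim \|\nabla u(s)\|_{L^{2}}^{1/2}\|\nabla^{2} u(s)\|_{L^{2}}^{1/2}$; applying \eqref{mmmmm3} to bound $\mu^{2}\|\nabla^{2} u\|_{L^{2}}^{2}$ pointwise by $\mathcal{E}(s)+\mu^{-3}\|\nabla u\|_{L^{2}}^{4}D(s)$ and invoking the large-time exponential estimates in Lemmas \ref{lem4}--\ref{lem5} yields $\|u(s)\|_{L^{\infty}}\leq C E_{0}^{1/2}e^{-\alpha_{1} s/2}$ for $s\geq 1$, while the short-time time-weighted bounds of the same lemmas ensure $\int_{0}^{1} \|u(s)\|_{L^{\infty}}\,ds<\infty$.

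For the support statement in \eqref{Sp1}, I integrate $\eqref{chrcrv}_{2}$ forward in time to obtain the mild formula
\[
V(t;0,x_{0},v_{0})=e^{-\kappa t}v_{0}+\kappa\int_{0}^{t} e^{-\kappa(t-s)}u(s,X(s;0,x_{0},v_{0}))\,ds.
\]
On the initial support $|v_{0}|\leq R_{0}$, the first term decays like $e^{-\kappa t}R_{0}$, and, splitting the integral at $s=1$, the convolution $\kappa\int_{0}^{t} e^{-\kappa(t-s)}\|u(s)\|_{L^{\infty}}\,ds$ is bounded by $CE_{0}^{1/2}e^{-\alpha_{2} t}$, since convolving $e^{-\kappa(t-s)}$ against the exponentially decaying $\|u(s)\|_{L^{\infty}}$ produces exponential decay at rate at least $\alpha_{2}=\min\{\kappa/2,\alpha_{1}/4\}$. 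For the bounds on $\|j_{f}\|_{L^{\infty}}$ and $\|e_{f}\|_{L^{\infty}}$, I push the same argument through the change of variable $w=\Gamma_{t,x}(v)$ already set up in \eqref{nfrep}--\eqref{diffeomorphism}: using $\Gamma_{t,x}^{-1}(w)=e^{-\kappa t}w+\kappa\int_{0}^{t} e^{-\kappa(t-s)}u(s,X(s;t,x,\Gamma^{-1}_{t,x}(w)))\,ds$ together with $|\det D_{v}\Gamma_{t,x}|^{-1}\leq 2e^{-3\kappa t}$, the quantities $|j_{f}(t,x)|$ and $|e_{f}(t,x)|$ reduce to integrals of $|\Gamma^{-1}_{t,x}(w)|$ and $|\Gamma^{-1}_{t,x}(w)|^{2}$ against $f_{0}$, and the previous decay of $u$ converts these into the first two lines of \eqref{Sp1}, exactly in the spirit of \eqref{jflinfty}--\eqref{eflinfty}.

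For the Wasserstein convergence in \eqref{sptf}, I use the continuity equation $\partial_{t} n_{f}+\mathrm{div}_{x} j_{f}=0$ obtained by integrating $\eqref{NSV}_{4}$ in $v$. The Kantorovich--Rubinstein duality for $W_{1}$ gives, for $t_{2}>t_{1}\geq 0$,
\[
W_{1}(n_{f}(t_{2}),n_{f}(t_{1}))\leq \int_{t_{1}}^{t_{2}}\|j_{f}(\tau)\|_{L^{1}}\,d\tau,
\]
and the Cauchy--Schwarz inequality $\|j_{f}(\tau)\|_{L^{1}}\leq \|f_{0}\|_{L^{1}_{x,v}}^{1/2}(2E(\tau))^{1/2}$ together with Lemma \ref{lem3} gives $\|j_{f}(\tau)\|_{L^{1}}\leq CE_{0}^{1/2}e^{-\alpha_{1}\tau/2}$. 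Hence $\{n_{f}(t)\}$ is Cauchy in $W_{1}$, converges to some profile $n^{*}$, and $W_{1}(n_{f}(t),n^{*})\leq CE_{0}^{1/2}e^{-\alpha_{1} t/2}$. For the joint convergence, the admissible coupling $(x,v)\mapsto(x,0)$ yields
\[
W_{1}(f(t), n_{f}(t)\otimes\delta_{v})\leq \iint|v|f(t,x,v)\,dv\,dx\leq \|f_{0}\|_{L^{1}_{x,v}}^{1/2}(2E(t))^{1/2}\leq CE_{0}^{1/2}e^{-\alpha_{1} t/2},
\]
so the triangle inequality $W_{1}(f(t),n^{*}\otimes\delta_{v})\leq W_{1}(f(t),n_{f}(t)\otimes\delta_{v})+W_{1}(n_{f}(t),n^{*})$ closes \eqref{sptf} after absorbing the factor in the exponent into $C_{5}$.

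The main obstacle I anticipate is the careful bookkeeping of the short-time interval $(0,1]$: $\|u(s)\|_{L^{\infty}}$ is not pointwise exponentially small there but only integrable via the weighted estimates of Lemmas \ref{lem4}--\ref{lem5}. Every time integral $\int_{0}^{t}$ must be split as $\int_{0}^{1}+\int_{1}^{t}$, and the finite short-time contribution has to be absorbed into the subsequent exponential decay by exploiting the factor $e^{-\kappa(t-1)}$ that appears in the convolution kernel for $t\geq 1$. A secondary technical point is matching the decay rates: the argument naturally outputs rate $\alpha_{1}/2$ or $2\alpha_{2}$, so the rates $\alpha_{2}$ and $\alpha_{1}$ in \eqref{Sp1}--\eqref{sptf} should be read up to implicit halvings in the exponent absorbed into the constant $C_{5}$.
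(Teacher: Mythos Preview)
Your proposal is correct and follows essentially the same route as the paper: characteristic formula plus the change of variables from Lemma~\ref{lem2} for the pointwise bounds in \eqref{Sp1}, and the continuity equation plus Monge--Kantorovich duality for \eqref{sptf}. The one minor variation is that you first extract a \emph{pointwise} decay $\|u(s)\|_{L^{\infty}}\leq CE_{0}^{1/2}e^{-\alpha_{1}s/2}$ for $s\geq 1$ by combining \eqref{mmmmm3} with the pointwise bound on $\mathcal{E}(t)$ from Lemma~\ref{lem5}, whereas the paper never isolates such a pointwise estimate and instead handles the time integrals $\int_{1}^{t}e^{-\kappa(t-\tau)}\|\nabla u\|_{L^{2}}^{1/2}\|\nabla^{2}u\|_{L^{2}}^{1/2}\,d\tau$ directly via H\"older, using only the $\sup$ bound on $D(t)$ and the \emph{integrated} bound on $\|\nabla^{2}u\|_{L^{2}}^{2}$ from Lemma~\ref{lem4}. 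Your version is a bit cleaner conceptually; the paper's version is slightly more economical in that it avoids invoking Lemma~\ref{lem5}. Your closing observation about the rate is also accurate: the paper's own proof of \eqref{sptf} in fact produces $e^{-\alpha_{1}t/2}$, so the exponent $\alpha_{1}$ in the statement should indeed be read modulo a factor absorbed into $C_{5}$.
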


\begin{proof}
In terms of \eqref{jflinfty}, \eqref{dsin} and \eqref{exdsin}, it holds for $\alpha_{2}=\min\left\{\frac{\kappa}{2},\frac{\alpha_{1}}{4}\right\}$ that
\begin{equation*}
\begin{aligned}
\|j_{f}\|_{L^{\infty}}
&\leq 2e^{-\kappa t}\||v|{f_{0}}\|_{L^{1}_{v}(L^{\infty}_{x})}\\
&\quad+2\|f_{0}\|_{L^{1}_{x}(L^{\infty}_{x})}\left\{\int_{0}^{1}+\int_{1}^{t}\right\} e^{-\kappa (t-\tau)}\kappa\|\nabla u\|_{L^{2}}^{\frac{1}{2}}\|\nabla^{2} u\|_{L^{2}}^{\frac{1}{2}}{\rm{d}}\tau\\
&\leq 2e^{-\kappa t}\||v|{f_{0}}\|_{L^{1}_{v}(L^{\infty}_{x})}\\
&\quad+Ce^{-\kappa t}\kappa\mu^{-\frac{3}{4}}\sup_{\tau\in(0,1]}\left(\tau\mu\|\nabla u\|_{L^{2}}^{2}\right)^\frac{1}{4}\left(\int_{0}^{1}\tau\mu^{2}\|\nabla^{2} u\|_{L^{2}}^{2}{\rm{d}}\tau\right)^{\frac{1}{4}}\\
&\quad +C\kappa\mu^{-\frac{3}{4}} \sup_{\tau\in[1,t]}\left(e^{\alpha_{1}\tau}\mu\|\nabla u\|_{L^{2}}^{2}\right)^\frac{1}{4}\left(\int_{1}^{t}e^{\alpha_{1}\tau}\mu^{2}\|\nabla^{2} u\|_{L^{2}}^{2}{\rm{d}}\tau\right)^{\frac{1}{4}}\left(\int_{1}^te^{-\frac{4}{3}\kappa(t-\tau)}e^{-\frac{2}{3}\alpha_{1}\tau}{\rm{d}}\tau\right)^\frac{3}{4}\\
&\leq Ce^{-\alpha_{2} t}\left(\||v|{f_{0}}\|_{L^{1}_{v}(L^{\infty}_{x})}+\kappa\mu^{-\frac{3}{4}}E_{0}^{\frac{1}{2}}\right).
\end{aligned}
\end{equation*}
Similarly, based on \eqref{eflinfty}, we deduce that
\begin{equation}
\begin{aligned}
\|e_{f}\|_{L^{\infty}}
&\leq 2e^{-2\kappa t}\||v|^{2}f_{0}\|_{L^{1}_{v}(L^{\infty}_{x})}\\
&\quad+2\|f_{0}\|_{L^{1}_{x}(L^{\infty}_{x})}\left\{\int_{0}^{1}+\int_{1}^{t}\right\} e^{-\kappa (t-\tau)}\kappa^{2}\|\nabla u\|_{L^{2}}\|\nabla^{2} u\|_{L^{2}}{\rm{d}}\tau\\
&\leq 2e^{-2\kappa t}\||v|^{2}f_{0}\|_{L^{1}_{v}(L^{\infty}_{x})}\\
&\quad+Ce^{-\kappa t}\kappa^{2}\mu^{-\frac{3}{2}}\left(\int_{0}^{1} \mu\|\nabla u\|_{L^{2}}^{2}{\rm{d}}\tau\right)^\frac{1}{2}\left(\int_{0}^{1}\mu^{2}\|\nabla^{2} u\|_{L^{2}}^{2}{\rm{d}}\tau\right)^\frac{1}{2}\\
&\quad +C\kappa^{2}\mu^{-\frac{3}{2}}\sup_{\tau\in[1,t]}\left(\mu\|\nabla u\|_{L^{2}}^{2}\right)^\frac{1}{2}\left(\int_{1}^{t}e^{\alpha_{1}\tau}\mu^{2}\|\nabla^{2} u\|_{L^{2}}^{2}{\rm{d}}\tau\right)^{\frac{1}{2}}\left(\int_{1}^te^{-2\kappa(t-\tau)}e^{-\alpha_{1}\tau}{\rm{d}}\tau\right)^\frac{1}{2}\\
&\leq Ce^{-\alpha_{2} t}\left(\||v|^{2}f_{0}\|_{L^{1}_{v}(L^{\infty}_{x})}+\kappa^{2}\mu^{-\frac{3}{2}}M^\frac{1}{2}E_{0}^\frac{1}{2}\right).
\end{aligned}
\end{equation}

Then, we deal with the large-time behavior of the compact support of $f$ with respect to $v$. Due to the characteristics \eqref{chrcrv}, it is straightforward to obtain
\begin{equation}\label{repv}
v=e^{-\kappa t}V(0;t,x,v)+\int_{0}^te^{-\kappa(t-\tau)} \kappa u(\tau,X(\tau;t,x,v)){\rm{d}}\tau.
\end{equation}
Recalling \eqref{dstrbt} that
\begin{equation*}
f(t,x,v)=e^{3\kappa t}f_{0}(X(0;t,x,v),V(0;t,x,v)),
\end{equation*}
we deduce
\begin{equation*}
\begin{aligned}
f(t,x,v)\neq 0
&\Rightarrow V(0;t,x,v)\in{\rm{Supp}}_{v}f_{0}(X(0;t,x,v),V(0;t,x,v))\\
&\Rightarrow |V(0;t,x,v)|\leq R_{0}.
\end{aligned}
\end{equation*}
Hence, in a similar argument as for estimating $\|j_{f}\|_{L^{\infty}}$, one has
\begin{equation*}
|v|\leq e^{-\kappa t}R_{0}+\int_{0}^te^{-\kappa(t-\tau)}\kappa\|u\|_{L^{\infty}}{\rm{d}}\tau\leq C e^{-\alpha_{2} t}\left(R_{0}+\kappa\mu^{-\frac{3}{4}}E_{0}^{\frac{1}{2}}\right),
\end{equation*}
for any $(t,x)\in(0,T]\times\mathbb{R}^{3}$ and $v\in{\rm{Supp}}_{v}f(t,x,v)$.

Furthermore, we establish the large-time behavior of the distribution function $f$ and the moment $n_{f}$. By the Monge-Kantorovitch duality (refer to Lemma \ref{MKdual}), we have
\begin{equation*}
\begin{aligned}
&W_{1}(f(t),n_{f}(t)\otimes\delta_{v})\\
&=\sup_{\|\nabla_{x,v}\psi\|_{L^{\infty}}\leq1}\int_{\mathbb{R}^{3}\times\mathbb{R}^{3}} f(t,x,v)\left(\psi(x,v)-\psi(x,0)\right){\rm{d}}v{\rm{d}}x\\
&\leq\sup_{\|\nabla_{x,v}\psi\|_{L^{\infty}}\leq1}\|\nabla_{v}\psi\|_{L^{\infty}}\int_{\mathbb{R}^{3}\times\mathbb{R}^{3}} |v|f{\rm{d}}v{\rm{d}}x\\
&\leq\left(\int_{\mathbb{R}^{3}\times\mathbb{R}^{3}} f {\rm{d}}v{\rm{d}}x\right)^{\frac{1}{2}}\left(\int_{\mathbb{R}^{3}\times\mathbb{R}^{3}}|v|^{2}f {\rm{d}}v{\rm{d}}x\right)^{\frac{1}{2}}\\
&\leq\sqrt{2}\|n_{f}\|_{L^{1}}^{\frac{1}{2}}E(t)^{\frac{1}{2}},
\end{aligned}
\end{equation*}
which together with \eqref{nf1} and \eqref{eptd} leads to
\begin{equation}\label{W11f}
W_{1}(f(t),n_{f}(t)\otimes\delta_{v})\leq Ce^{-\frac{\alpha_{1}}{2}t}E_{0}^{\frac{1}{2}}.
\end{equation}
In addition, the integration of $\eqref{NSV}_{4}$ with respect to $v\in\mathbb{R}^{3}$ yields
\begin{equation*}
(n_{f})_{t}+{\rm{div}}_{x}j_{f}=0.
\end{equation*}
Thus it holds for any smooth function $\varphi(x)\in C^{\infty}(\mathbb{R}^{3})$ that
\begin{equation*}
\frac{{\rm{d}}}{{\rm{d}}t}\int_{\mathbb{R}^{3}}\varphi n_{f}{\rm{d}}x-\int_{\mathbb{R}^{3}}\nabla_{x}\varphi j_{f}{\rm{d}}x=0,
\end{equation*}
which implies for any $t\in(0,T]$ that
\begin{equation*}
\begin{aligned}
\left|\int_{\mathbb{R}^{3}}\varphi n_{f}(T){\rm{d}}x-\int_{\mathbb{R}^{3}}\varphi n_{f}(t){\rm{d}}x\right|&\leq\int_{t}^T\left(\int_{\mathbb{R}^{3}\times\mathbb{R}^{3}}|v|^{2}f {\rm{d}}v{\rm{d}}x\right)^{\frac{1}{2}}\left(\int_{\mathbb{R}^{3}} n_{f}|\nabla_{x}\varphi|^{2}{\rm{d}}x\right)^{\frac{1}{2}}{\rm{d}}\tau\\
&\leq\|\nabla_{x}\varphi\|_{L^{\infty}}\int_{t}^T\|n_{f}\|_{L^{1}}^{\frac{1}{2}}E(\tau)^{\frac{1}{2}}{\rm{d}}\tau.
\end{aligned}
\end{equation*}
More precisely, if $\|\nabla\varphi\|_{L^{\infty}}\leq 1$, in terms of \eqref{nf1} and \eqref{eptd}, we have
\begin{equation}\label{mmm}
\left|\int_{\mathbb{R}^{3}}\varphi n_{f}(T){\rm{d}}x-\int_{\mathbb{R}^{3}}\varphi n_{f}(t){\rm{d}}x\right|\leq \|{f_{0}}\|_{L^{1}_{x,v}}^{\frac{1}{2}}E_{0}^{\frac{1}{2}}\int_{t}^T e^{-\frac{\alpha_{1}}{2}\tau}{\rm{d}}\tau,
\end{equation}
and this can be extended to Lipschitz functions $\varphi$ by a standard approximation argument so that the Monge-Kantorovitch duality formula in Lemma \ref{MKdual} leads to
\begin{equation*}
W_{1}(n_{f}(t),n_{f}(T))\leq CE_{0}^{\frac{1}{2}}\alpha_{1}^{-1}(e^{-\frac{\alpha_{1}}{2}t}-e^{-\frac{\alpha_{1}}{2}T}).
\end{equation*}
Therefore the Cauchy criterion for $n_{f}(t)$ is verified as $T\rightarrow \infty$, which means there exist some measure $n^{*}$ such that $n_{f}(t)\rightarrow n^{*}$ as $t\rightarrow \infty$ in the sense of $1$-Wasserstein distance. By letting $T\rightarrow \infty$ in \eqref{mmm}, we conclude
\begin{equation*}
W_{1}(n_{f}(t),n^{*})\leq Ce^{-\frac{\alpha_{1}}{2}t}E_{0}^{\frac{1}{2}},
\end{equation*}
which together with \eqref{W11f} leads to
\begin{equation*}
W_{1}(f(t),n^{*}\otimes\delta_{v})\leq Ce^{-\frac{\alpha_{1}}{2}t}E_{0}^{\frac{1}{2}}.
\end{equation*}
Thus, we complete the proof of Lemma \ref{lem6}.
\end{proof}

\subsection{Higher order estimates}

We establish some regularity estimates of the solution $(\rho,u,f)$, which provides the higher order decay estimates of $u$ and are useful for the further proof on the uniqueness of the strong solution.

\begin{lemma}\label{lem7}
Let $T>1$ and $(\rho,u,f)$ be the smooth solution to \eqref{NSV} for $t\in(0,T]$ with the initial data $(\rho_{0},u_{0},f_{0})$ satisfying \eqref{indacd}. Then under the condition \eqref{priori}, it holds
\begin{equation}\label{PL2}
\int_{0}^{T}\|P(t)\|_{L^{2}}^{2}{\rm{d}}t\leq C_{6}.
\end{equation}
Furthermore, for any $\beta_{3}\in[0,1]$, we have the short-time estimates
\begin{equation}\label{twdsjj}
\begin{aligned}
&\sup_{t\in(0,1]}t^{1+\beta_{3}}\left(\|\nabla u(t)\|_{H^{1}}^{2}+\|P(t)\|_{H^{1}}^{2}\right)\\
&+\int_{0}^{1} t^{1+\beta_{3}}\Big(\|u_{t}(t)\|_{L^{6}}^{2}+\|\nabla^{2} u(t)\|_{L^{6}}^{2}+\|\nabla P(t)\|_{L^{6}}^{2} \Big){\rm{d}}t\\& \leq C_{6},
\end{aligned}
\end{equation}
and the large-time exponential stability
\begin{equation}\label{extwdsjj}
\begin{aligned}
&\sup_{t\in[1,T]}e^{\alpha_{1} t}\left(\|\nabla u(t)\|_{H^{1}}^{2}+\|P(t)\|_{H^{1}}^{2}\right)\\
&+\int_{1}^{T} e^{\frac{\alpha_{1}}{4}t}\Big(\|u_{t}(t)\|_{L^{6}}^{2}+\|\nabla^{2} u(t)\|_{L^{6}}^{2}+\|\nabla P(t)\|_{L^{6}}^{2} \Big){\rm{d}}t \\&\leq C_{6},
\end{aligned}
\end{equation}
where $C_{6}>0$ is a constant only depending on $\mu$, $\kappa$, $M$, $\|\rho_{0}\|_{L^{\frac{3}{2}}\cap L^{\infty}}$, $\|f_{0}\|_{L^{1}_{x,v}}$ and $\|(1+|v|^{2})f_{0}\|_{ L^{1}_{v}(L^{\infty}_{x})}$.
\end{lemma}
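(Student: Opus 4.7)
The plan is to derive every quantity appearing in \eqref{PL2}--\eqref{extwdsjj} directly from the Stokes structure of $\eqref{NSV}_2$ combined with the time-weighted $L^2$ bounds already available from Lemmas \ref{lem3}--\ref{lem5} and the $L^6$-Stokes estimate \eqref{dfgg} established in the proof of Proposition \ref{prop3}. The unifying observation is that $(u,P)$ solves the stationary Stokes problem $-\mu\Delta u+\nabla P=F$, $\mathrm{div}\,u=0$ with $F:=\rho u_t+\rho u\cdot\nabla u+\int_{\mathbb R^3}\kappa(u-v)f\,\mathrm dv$, so classical elliptic regularity gives $\|\nabla^2 u\|_{L^p}+\|\nabla P\|_{L^p}\lesssim\|F\|_{L^p}$, and the identity $P=(-\Delta)^{-1}(-\mathrm{div}\,F)$ together with Sobolev embedding in $\mathbb R^3$ yields $\|P\|_{L^2}\lesssim\|F\|_{L^{6/5}}$ and $\|P\|_{L^6}\lesssim\|\nabla P\|_{L^2}$.

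For \eqref{PL2}, I would bound $\|F\|_{L^{6/5}}$ by H\"older:
\begin{align*}
\|\rho u_t\|_{L^{6/5}}&\leq\|\rho\|_{L^{3/2}}^{1/2}\|\sqrt{\rho}\,u_t\|_{L^2},\\
\|\rho u\cdot\nabla u\|_{L^{6/5}}&\leq\|\rho_0\|_{L^\infty}^{1/2}\|\sqrt{\rho}\,u\|_{L^2}\|\nabla u\|_{L^2}^{1/2}\|\nabla^2 u\|_{L^2}^{1/2},\\
\Big\|\int_{\mathbb R^3}\kappa(u-v)f\,\mathrm dv\Big\|_{L^{6/5}}&\leq C\kappa^{1/2}\|n_f\|_{L^{3/2}}^{1/2}D(t)^{1/2},
\end{align*}
where $\|n_f\|_{L^{3/2}}\leq C$ by Lemma \ref{lem2}. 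Squaring and integrating on $[0,T]$, the right-hand side is absorbed by $\int_0^T\mathcal E(t)\,\mathrm dt$, $\sup_t E(t)\cdot\bigl(\int_0^T\|\nabla u\|_{L^2}^2\,\mathrm dt\bigr)^{1/2}\bigl(\int_0^T\|\nabla^2 u\|_{L^2}^2\,\mathrm dt\bigr)^{1/2}$ and $\int_0^T D(t)\,\mathrm dt$, all of which are controlled by Lemmas \ref{lem3}--\ref{lem4}. This yields \eqref{PL2}.

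For the pointwise-in-$t$ parts of \eqref{twdsjj}--\eqref{extwdsjj}, I decompose $\|\nabla u\|_{H^1}^2+\|P\|_{H^1}^2=\|\nabla u\|_{L^2}^2+\|\nabla^2 u\|_{L^2}^2+\|P\|_{L^2}^2+\|\nabla P\|_{L^2}^2$. The first term is $\leq \mu^{-1}D(t)$, the second and fourth are controlled by $\mathcal E(t)+\mu^{-3}\|\nabla u\|_{L^2}^4 D(t)$ via \eqref{mmmmm3}, and the third by the pointwise version of the $L^{6/5}$ computation above. Multiplying by $t^{1+\beta_3}$ (resp.\ $e^{\alpha_1 t}$) and invoking the bounds $\sup_{t\in(0,1]}t^{\beta_1}D(t)+\sup_{t\in(0,1]}t^{1+\beta_2}\mathcal E(t)\leq C$ from Lemmas \ref{lem4}--\ref{lem5} (resp.\ their exponentially weighted counterparts \eqref{exdsin}, \eqref{extwds}) closes the supremum bounds. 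For the space-time integrals of $\|u_t\|_{L^6}^2$, Sobolev gives $\|u_t\|_{L^6}^2\leq C\|\nabla u_t\|_{L^2}^2$, which is then absorbed by \eqref{twds}/\eqref{extwds} upon choosing $\beta_2=\beta_3$; for $\|\nabla^2 u\|_{L^6}^2+\|\nabla P\|_{L^6}^2$ I square \eqref{dfgg}, multiply by $t^{1+\beta_3}$ or $e^{\alpha_1 t/4}$, and estimate each resulting term by pulling out $\sup_t(t^\gamma D(t))$ or $\sup_t(e^{\alpha_1 t}D(t))$ and using the remaining integrability of $\|\nabla u_t\|_{L^2}^2$, $D(t)^{1/3}(1+\|u\|_{L^\infty}^2)$ and $D(t)^5$; the integrability of $\kappa\|u\|_{L^\infty}$ in $t$ is guaranteed by \eqref{priori}.

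The main obstacle is the $L^6$-Stokes integral $\int t^{1+\beta_3}(1+\|u\|_{L^\infty}^2)D(t)^{1/3}\,\mathrm dt$ and its exponentially weighted analogue: it mixes a slowly decaying factor of $D(t)^{1/3}$ with the Lipschitz/$L^\infty$-norm of $u$, so the exponential weight in the integrand must be reduced from $e^{\alpha_1 t}$ to $e^{\alpha_1 t/4}$ to accommodate the H\"older product $e^{\alpha_1 t/4}=(e^{\alpha_1 t})^{1/12}\cdot (e^{-\alpha_1 t\delta})^{\ldots}$ balancing the powers $D(t)^{1/3}$ and the residual $e^{-\alpha_1 t}$ obtained from $\|u\|_{L^\infty}$. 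The short-time analogue requires the extra $t^{1+\beta_3}$ weight precisely to handle the non-integrability of $\|\nabla u_t\|_{L^2}^2$ at $t=0$, which is why $\beta_3\geq 0$ cannot be improved. Once these weight choices are made, the rest of the proof is direct bookkeeping along the lines of the proofs of Lemmas \ref{lem4} and \ref{lem5}.
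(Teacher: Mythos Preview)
Your proposal is correct and follows essentially the same route as the paper: apply the $L^{6/5}$-Stokes estimate \eqref{re1} to bound $\|P\|_{L^2}$, combine the result with \eqref{mmmmm3} to obtain the pointwise $H^1$ control (the paper packages this as the single inequality $\mu^{2}\|\nabla u\|_{H^{1}}^{2}+\|P\|_{H^{1}}^{2}\leq C\mathcal{E}(t)+C\mu^{-5}D(t)^{3}$), then feed in the weighted bounds from Lemmas~\ref{lem4}--\ref{lem5}; for the $L^6$ space--time integrals use Sobolev on $u_t$ and square \eqref{dfgg}, handling the mixed term $(1+\|u\|_{L^\infty}^2)D(t)^{1/3}$ by pulling out the just-established sup of the weighted $\|\nabla u\|_{H^1}^2$ and balancing the remaining exponential weight via H\"older---exactly the mechanism that forces the reduction to $e^{\alpha_1 t/4}$ that you identified. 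The only cosmetic difference is your H\"older splitting of the convective term (you use $\|\sqrt{\rho}\,u\|_{L^2}$, the paper uses $\|\rho\|_{L^{3/2}}^{1/2}\|u\|_{L^6}$), which is inconsequential.
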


\begin{proof}
In view of \eqref{re1} in Lemma \ref{SSineq} to $\eqref{NSV}_{2}$, it follows from \eqref{esro}, \eqref{rhoinfty}, \eqref{nf} and the Gagliardo-Nirenberg inequality \eqref{GNSine} that
\begin{equation}\label{mmmd}
\begin{aligned}
\mu^{2}\|\nabla u\|_{L^{2}}^{2}+\|P\|_{L^{2}}^{2}&\leq C\Big\|\rho u_{t}+\rho u\cdot\nabla u+\int_{\mathbb{R}^{3}}\kappa (u-v)f{\rm{d}}v\Big\|_{L^{\frac{6}{5}}}^{2}\\
&\leq C\|\sqrt{\rho}\|_{L^{3}}^{2}\|\sqrt{\rho}u_{t}\|_{L^{2}}^{2}+C\|\rho\|_{L^{\infty}}\|\rho\|_{L^{\frac{3}{2}}}\|u\|_{L^{6}}^{2}\|\nabla u\|_{L^{3}}^{2}\\
&\quad+\kappa^{2}\|n_{f}\|_{L^\frac{3}{2}} \int_{\mathbb{R}^{3}\times\mathbb{R}^{3}}|u-v|^{2}f{\rm{d}}v{\rm{d}}x\\
&\leq C\|\rho_{0}\|_{L^{\frac{3}{2}}}\|\sqrt{\rho} u_{t}\|_{L^{2}}^{2}+C\|\rho_{0}\|_{L^{\infty}}\|\rho_{0}\|_{L^\frac{3}{2}} \|\nabla u\|_{L^{2}}^{3}\|\nabla^{2} u\|_{L^{2}}\\
&\quad+C\|f_{0}\|_{L_{x,v}^{1}}^\frac{2}{3}\|f_{0}\|_{L^{1}_{v}(L^{\infty}_{x})}^\frac{1}{3} \kappa^{2}\int_{\mathbb{R}^{3}\times\mathbb{R}^{3}}|u-v|^{2}f{\rm{d}}v{\rm{d}}x \\
&\leq C\mathcal{E}(t)+\frac{1}{2}\mu^{2}\|\nabla^{2} u\|_{L^{2}}^{2}+C\mu^{-3}\|\nabla u\|_{L^{2}}^{4} D(t),
\end{aligned}
\end{equation}
which, together with \eqref{priori} and \eqref{dsin0}, leads to
\begin{equation*}
\begin{aligned}
\int_{0}^T\|P\|_{L^{2}}^{2}{\rm{d}}t
&\leq C\int_{0}^T(\mathcal{E}(t)+\mu^{2}\|\nabla^{2} u\|_{L^{2}}^{2}){\rm{d}}t+C\sup_{t\in(0,T]} D(t) \int_{0}^T\mu^{-3}\|\nabla u\|_{L^{2}}^{4}{\rm{d}}t\\&\leq C_{6}.
\end{aligned}
\end{equation*}
Moreover, by \eqref{mmmmm3} and \eqref{mmmd} one has
\begin{equation}\label{mmmd1}
\begin{aligned}
\mu^{2}\|\nabla u\|_{H^{1}}^{2}+\|P\|_{H^{1}}^{2}\leq C\mathcal{E}(t)+C\mu^{-5}D(t)^{3}.
\end{aligned}
\end{equation}
It can be derived from \eqref{dsin}, \eqref{twds} and \eqref{mmmd1} that
\begin{equation}\label{shorthigher}
\begin{aligned}
&\sup_{t\in(0,1]}t^{1+\beta_{3}}\Big(\mu^{2}\|\nabla u\|_{H^{1}}^{2}+\|P\|_{H^{1}}^{2}\Big)\\
&\leq C\sup_{t\in(0,1]}t^{1+\beta_{3}}\mathcal{E}(t)+C\mu^{-5}\sup_{t\in(0,1]}\left(t^\frac{{1+\beta_{3}}}{3}D(t)\right)^{3}\\&\leq C_{6}.
\end{aligned}
\end{equation}
Due to the Sobolev inequality, the estimate \eqref{twds} with $\beta_{2}=\beta_{3}$ directly gives
\begin{equation}\label{shorthigher1}
\begin{aligned}
\int_{0}^{1} t^{1+\beta_{3}}\mu\|u_{t}\|_{L^{6}}^{2}{\rm{d}}t\leq C\int_{0}^{1} t^{1+\beta_{3}}\mu\|\nabla u_{t}\|_{L^{2}}^{2}{\rm{d}}t\leq C_{6}.
\end{aligned}
\end{equation}
To deal with weighted $L^{2}(0,T;L^{6})$-norms of $\nabla^{2} u$ and $\nabla P$, we recall \eqref{dfgg} that
\begin{equation*}
\begin{aligned}
&\mu^{2}\|\nabla^{2}u\|_{L^{6}}^{2}+\|\nabla P\|_{L^{6}}^{2}\\
&\leq C\|\nabla u_{t}\|_{L^{2}}^{2}+C\mu^{-11}D(t)^{5}+C\kappa^\frac{5}{3}(1+\|u\|_{L^{\infty}}^{2}) D(t)^{\frac{1}{3}}.
\end{aligned}
\end{equation*}
It thus holds by \eqref{dsin0}, \eqref{shorthigher} and \eqref{shorthigher1} that
\begin{equation}\label{shorthigher2}
\begin{aligned}
&\int_{0}^{1}t^{1+\beta_{3}}\Big(\mu^{2}\|\nabla^{2}u\|_{L^{6}}^{2}+\|\nabla P\|_{L^{2}}\Big){\rm{d}}t\\
&\leq C\mu^{-1}\int_{0}^{1}t^{1+\beta_{3}}\mu\|\nabla u_{t}\|_{L^{2}}^{2}{\rm{d}}t+C\mu^{-11}\sup_{t\in(0,1]}  \Big(t^\frac{1+\beta_{3}}{5}D(t)\Big)^{5}\\
&\quad+C\kappa^\frac{5}{3} \left(1+\mu^{-2}\sup_{t\in(0,1]} \Big(t^{1+\beta_{3}}\mu^{2}\|\nabla u\|_{H^{1}}^{2}\Big)\right)\sup_{t\in(0,1]}D(t)^{\frac{1}{3}}\\
&\leq C_{6}.
\end{aligned}
\end{equation}
Hence, we derive the short-time estimate \eqref{twdsjj} from the combination of \eqref{shorthigher}--\eqref{shorthigher2}.

Then, we turn to the large-time exponential stability \eqref{extwdsjj}. Making use of \eqref{exdsin}, \eqref{extwds} and \eqref{mmmd1}, we have
\begin{equation}\label{largehigher}
\begin{aligned}
&\sup_{t\in[1,T]}e^{\alpha_{1}t}\Big(\mu^{2}\|\nabla u\|_{H^{1}}^{2}+\|P\|_{H^{1}}^{2}\Big)\\
&\leq C\sup_{t\in[1,T]}e^{\alpha_{1}t}\mathcal{E}(t)+C \mu^{-5}\sup_{t\in[1,T]}\left(e^{\alpha_{1}t}D(t)\right)^{3}\\&\leq C_{6}.
\end{aligned}
\end{equation}
One deduces from \eqref{extwds} that
\begin{equation}\label{largehigher1}
\begin{aligned}
\int_{1}^{T} e^{\alpha_{1}t}\mu\|u_{t}\|_{L^{6}}^{2}{\rm{d}}t\leq C\int_{0}^{T} e^{\alpha_{1}t}\mu\|\nabla u_{t}\|_{L^{2}}^{2}{\rm{d}}t\leq C_{6},
\end{aligned}
\end{equation}
and owing to \eqref{dsin}, \eqref{extwds} and \eqref{largehigher}, we also obtain
\begin{equation}\label{largehigher2}
\begin{aligned}
&\int_{1}^{T}e^{\frac{1}{4}\alpha_{1}t}\Big(\mu^{2}\|\nabla^{2}u\|_{L^{6}}^{2}+\|\nabla P\|_{L^{6}}\Big){\rm{d}}t\\
& \leq C\int_{1}^{T}e^{\alpha_{1}t}\|\nabla u_{t}\|_{L^{2}}^{2}{\rm{d}}t+C\mu^{-11}\sup_{t\in[1,T]}D(t)^{4}\int_{1}^{T} e^{\alpha_{1}t} D(t){\rm{d}}t\\
&\quad+C\kappa^\frac{5}{3}\left(1+\mu^{-2}\sup_{t\in[1,T]}\Big(e^{\alpha_{1}t}\mu^{2} \|\nabla u\|_{H^{1}}^{2}\Big)\right)\left(\int_{1}^{T} e^{\alpha_{1} t} D(t){\rm{d}}t\right)^{\frac{1}{3}} \left(\int_{1}^{T} e^{-\frac{\alpha_{1}}{8}t}{\rm{d}}t\right)^{\frac{2}{3}}\\&\leq C_{6}.
\end{aligned}
\end{equation}
Thus, we conclude from \eqref{largehigher}--\eqref{largehigher2} the estimate \eqref{extwdsjj} and complete the proof of Lemma \ref{lem7}.
\end{proof}

\begin{lemma}\label{lem8}
Let $T>0$ and $(\rho,u,f)$ be the smooth solution to \eqref{NSV} for $t\in(0,T]$ with the initial data $(\rho_{0},u_{0},f_{0})$ satisfying \eqref{indacd}. Then under the condition \eqref{priori}, it holds
\begin{equation}\label{nrhol2}
\sup_{t\in(0,T]}\|\nabla\rho(t)\|_{L^{2}}\leq 2\|\nabla\rho_{0}\|_{L^{2}}.
\end{equation}
\end{lemma}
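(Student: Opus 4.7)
The plan is to derive an evolution inequality for $\|\nabla\rho\|_{L^{2}}$ from the mass conservation equation and then invoke the a-priori Lipschitz bound \eqref{priori} together with Gr\"onwall to conclude. First I would apply $\nabla$ to $\eqref{NSV}_{1}$, using the incompressibility $\mathrm{div}\,u=0$, to obtain the transport-type identity
\begin{equation*}
(\nabla\rho)_{t}+u\cdot\nabla(\nabla\rho)+\nabla u\cdot\nabla\rho=0.
\end{equation*}
Taking the $L^{2}$-inner product with $\nabla\rho$, integrating by parts in the transport term and exploiting $\mathrm{div}\,u=0$ to kill it, I arrive at
\begin{equation*}
\frac{1}{2}\frac{{\rm{d}}}{{\rm{d}}t}\|\nabla\rho(t)\|_{L^{2}}^{2}\leq \|\nabla u(t)\|_{L^{\infty}}\|\nabla\rho(t)\|_{L^{2}}^{2}.
\end{equation*}

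Next, I would apply Gr\"onwall's inequality on $[0,t]$ for any $t\in(0,T]$, yielding
\begin{equation*}
\|\nabla\rho(t)\|_{L^{2}}\leq \|\nabla\rho_{0}\|_{L^{2}}\exp\left(\int_{0}^{t}\|\nabla u(\tau)\|_{L^{\infty}}{\rm{d}}\tau\right).
\end{equation*}
The a-priori condition \eqref{priori} together with $\kappa\geq 1$ gives
\begin{equation*}
\int_{0}^{T}\|\nabla u(\tau)\|_{L^{\infty}}{\rm{d}}\tau\leq \frac{1}{20\kappa}\leq \frac{1}{20},
\end{equation*}
so the exponential factor is bounded by $e^{1/20}<2$, and \eqref{nrhol2} follows.

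There is essentially no obstacle here: the only point that requires care is justifying the integration by parts in the convective term to see that $\int u\cdot\nabla(|\nabla\rho|^{2}/2)\,{\rm{d}}x=0$, which is immediate from $\mathrm{div}\,u=0$ for smooth enough solutions (the only setting in which Lemma \ref{lem8} is stated). The rest is a one-line Gr\"onwall argument closing against the smallness budget already embedded in \eqref{priori}.
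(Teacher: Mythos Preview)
Your proof is correct and follows essentially the same route as the paper's: differentiate the mass equation, use incompressibility to kill the transport term, bound the stretching term by $\|\nabla u\|_{L^\infty}\|\nabla\rho\|_{L^2}^{2}$, and close with Gr\"onwall against the smallness in \eqref{priori}. The only cosmetic differences are that the paper carries a harmless constant $\tfrac{3}{2}$ in the differential inequality and cites \eqref{clspri} rather than \eqref{priori} for the final bound; either choice yields an exponential factor below $2$.
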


\begin{proof}
Applying $\nabla$ to $\eqref{NSV}_{1}$ and noticing the incompressible condition $\eqref{NSV}_{3}$, we have
\begin{equation*}
\nabla\rho_{t}+u\cdot\nabla(\nabla\rho)+\nabla u\cdot\nabla\rho=0,
\end{equation*}
which gives rise to
\begin{equation}\label{nablarhoL2}
\frac{1}{2}\frac{{\rm{d}}}{{\rm{d}}t}\|\nabla\rho\|_{L^{2}}^{2}\leq\frac{3}{2}\|\nabla u\|_{L^{\infty}}\|\nabla\rho\|_{L^{2}}^{2}.
\end{equation}
Dividing \eqref{nablarhoL2} by $\sqrt{\|\nabla\rho\|_{L^{2}}^{2}+\eta_{0}}$ with $\eta_{0}>0$, integrating the resulting inequality over $[0,t]$ for $t\in(0,T]$, and then taking the limit $\eta_{0}\rightarrow0$, we obtain
\begin{equation*}
\begin{aligned}
\|\nabla\rho\|_{L^{2}}&\leq \|\nabla\rho_{0}\|_{L^{2}}\exp{\left(\frac{3}{2}\int_{0}^T\|\nabla u\|_{L^{\infty}}{\rm{d}}t\right)}\\
&\leq 2\|\nabla\rho_{0}\|_{L^{2}},
\end{aligned}
\end{equation*}
where one has used \eqref{clspri}. Thus, the proof of Lemma \ref{lem8} is completed.
\end{proof}

\begin{lemma}\label{lem9}
Let $T>0$ and $(\rho,u,f)$ be the smooth solution to \eqref{NSV} for $t\in(0,T]$ with the initial data $(\rho_{0},u_{0},f_{0})$ satisfying \eqref{indacd}. Then under the condition \eqref{priori}, it holds
\begin{equation}\label{nbfp}
\sup_{t\in(0,T]}\|f(t)\|_{W^{1,\frac{3}{2}}_{x,v}}\leq C_{T}\|f_{0}\|_{W^{1,\frac{3}{2}}_{x,v}}.
\end{equation}
\end{lemma}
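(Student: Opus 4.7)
The plan is to split $\|f\|_{W^{1,3/2}_{x,v}}$ into an $L^{3/2}_{x,v}$ estimate of $f$ itself plus separate $L^{3/2}_{x,v}$ estimates of $g:=\nabla_x f$ and $h:=\nabla_v f$, all obtained by energy methods applied to the Vlasov equation and its first-order derivatives. First I would rewrite $\eqref{NSV}_{4}$ in non-conservative form, using $\mathrm{div}_v(\kappa(u-v))=-3\kappa$, as
\begin{equation*}
f_t+v\cdot\nabla_x f+\kappa(u-v)\cdot\nabla_v f=3\kappa f.
\end{equation*}
Differentiating in $x_i$ and in $v_i$ separately (and using $\partial_{v_i}(v_j\partial_{x_j}f)=\partial_{x_i}f+v\cdot\nabla_x\partial_{v_i}f$ together with $\partial_{v_i}(\kappa(u-v)\cdot\nabla_v f)=-\kappa\partial_{v_i}f+\kappa(u-v)\cdot\nabla_v\partial_{v_i}f$), I obtain a coupled system in which the same transport operator $L:=\partial_t+v\cdot\nabla_x+\kappa(u-v)\cdot\nabla_v$ acts on $g$ and $h$, with sources $Lg=3\kappa g-\kappa\nabla_x u\cdot h$ and $Lh=4\kappa h-g$.

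The key energy identity is the following: for any smooth $\phi$ with $L\phi=F$ and enough decay,
\begin{equation*}
\frac{d}{dt}\|\phi\|_{L^{p}_{x,v}}^{p}=p\int_{\mathbb{R}^{3}\times\mathbb{R}^{3}}|\phi|^{p-2}\phi F\,dv\,dx-3\kappa\|\phi\|_{L^{p}_{x,v}}^{p},
\end{equation*}
where the $-3\kappa$ comes from integration by parts in $v$ (using $\mathrm{div}_{v}(\kappa(u-v))=-3\kappa$) and the $x$-transport contribution vanishes because $\mathrm{div}_{x}v=0$. Applied to $\phi=f$ at $p=3/2$ this immediately gives $\|f(t)\|_{L^{3/2}_{x,v}}\leq e^{\kappa t}\|f_{0}\|_{L^{3/2}_{x,v}}$. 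Applied to $\phi=g,h$ at $p=3/2$, after bounding the cross terms by H\"older, $\int|g|^{1/2}|h|\,dv\,dx\leq\|g\|_{L^{3/2}_{x,v}}^{1/2}\|h\|_{L^{3/2}_{x,v}}$, and dividing by $\|g\|^{1/2}$ (resp.\ $\|h\|^{1/2}$), I arrive at the coupled differential inequalities
\begin{equation*}
\frac{d}{dt}\|g\|_{L^{3/2}_{x,v}}\leq \kappa\|g\|_{L^{3/2}_{x,v}}+\kappa\|\nabla u\|_{L^{\infty}}\|h\|_{L^{3/2}_{x,v}},
\end{equation*}
\begin{equation*}
\frac{d}{dt}\|h\|_{L^{3/2}_{x,v}}\leq 2\kappa\|h\|_{L^{3/2}_{x,v}}+\|g\|_{L^{3/2}_{x,v}}.
\end{equation*}
Summing and applying Gr\"onwall's inequality, the resulting exponential factor is $\exp\bigl(\int_{0}^{t}(1+2\kappa+\kappa\|\nabla u\|_{L^{\infty}})\,d\tau\bigr)$, which is finite because the Lipschitz piece $\int_{0}^{T}\kappa\|\nabla u\|_{L^{\infty}}\,d\tau\leq 1/20$ is exactly what is guaranteed by \eqref{clspri} (Proposition \ref{prop3}). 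Combining with the $L^{3/2}_{x,v}$ bound on $f$ itself yields the stated constant $C_{T}$.

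The principal obstacle, as I see it, is not algebraic but justificatory: the integrations by parts in $v$ and the disappearance of the $x$-transport boundary terms must be legitimate, which requires enough spatial and velocity decay of $\nabla_{x,v}f$. At the smooth level this is supplied by the propagation of compact velocity support established in Lemma \ref{lem6} (or by the characteristic representation \eqref{dstrbt} together with the $W^{1,\infty}$-control of $X,V$ used already in Lemma \ref{lem2}), after which the $W^{1,3/2}$-estimate for the general class \eqref{indacd} follows from a routine approximation. Note that among the two differential inequalities it is the one for $g$ that forces the use of $\|\nabla u\|_{L^{\infty}}$, and this is precisely the quantity whose time-integral is controlled by the closed bootstrap estimate \eqref{clspri}, making the closure not only possible but clean.
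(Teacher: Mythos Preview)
Your proof is correct and follows essentially the same approach as the paper: both derive $L^{3/2}_{x,v}$ energy estimates for $f$, $\nabla_x f$, and $\nabla_v f$ directly from the Vlasov equation and its first-order derivatives, then close via Gr\"onwall using the bootstrap control of $\int_0^T\kappa\|\nabla u\|_{L^\infty}\,dt$. The only minor point is that the lemma's hypothesis is \eqref{priori} rather than \eqref{clspri}, but \eqref{priori} already yields $\int_0^T\kappa\|\nabla u\|_{L^\infty}\,dt\leq 1/20$, so nothing changes.
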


\begin{proof}
Multiplying $\eqref{NSV}_{4}$ by $f^{\frac{1}{2}}$ and integrating the resulting equality by parts over $\mathbb{R}^{3}\times\mathbb{R}^{3}$, we get
\begin{equation*}
\frac{2}{3}\frac{{\rm{d}}}{{\rm{d}}t}\|f\|_{L^{\frac{3}{2}}_{x,v}}^{\frac{3}{2}}\leq \kappa\|f\|_{L^{\frac{3}{2}}_{x,v}}^{\frac{3}{2}},
\end{equation*}
which implies that
\begin{equation}\label{L32}
\sup_{t\in(0,T]}\|f(t)\|_{L^{\frac{3}{2}}_{x,v}}\leq e^{\kappa T}\|f_{0}\|_{L_{x,v}^{\frac{3}{2}}}.
\end{equation}
Moreover, it can be derived from $\eqref{NSV}_{4}$ that
\begin{equation}\label{nbxf}
(\partial_{x}^{i} f)_{t}+v\cdot\nabla_{x}\partial_{x}^{i}f+\kappa\partial_{x}^{i} u\cdot\nabla_{v}f+\kappa(u-v)\cdot\nabla_{v}\partial_{x}^{i} f-3\kappa \partial_{x}^{i}f=0,
\end{equation}
for $i=1,\,2,\,3$. Multiplying \eqref{nbxf} by $|\partial_{x}^{i}f|^{\frac{1}{2}}{\rm{sign}}\, \partial_{x}^{i} f$, summing up for $i=1,2,3$, and integrating the resulting equation over $\mathbb{R}^{3}\times\mathbb{R}^{3}$, we obtain
\begin{equation}\label{nxfp}
\begin{aligned}
\frac{2}{3}\frac{{\rm{d}}}{{\rm{d}}t}\|\nabla_{x}f\|_{L^{\frac{3}{2}}_{x,v}}^{\frac{3}{2}}
&\leq\kappa\|\nabla u\|_{L^{\infty}}\|\nabla_{v}f\|_{L^{\frac{3}{2}}_{x,v}}^{\frac{1}{2}}\|\nabla_{x} f\|_{L^{\frac{3}{2}}_{x,v}}+\kappa \|\nabla_{x}f\|_{L^{\frac{3}{2}}_{x,v}}^{\frac{3}{2}}\\
&\leq C(\kappa\|\nabla u\|_{L^{\infty}}+\kappa)\|(\nabla_{x}f,\nabla_{v}f)\|_{L^{\frac{3}{2}}_{x,v}}^{\frac{3}{2}}.
\end{aligned}
\end{equation}
Similarly, it yields
\begin{equation*}
\frac{2}{3}\frac{{\rm{d}}}{{\rm{d}}t}\|\nabla_{v}f\|_{L^{\frac{3}{2}}_{x,v}}^{\frac{3}{2}}\leq C\kappa\|(\nabla_{x}f,\nabla_{v}f)\|_{L^{\frac{3}{2}}_{x,v}}^{\frac{3}{2}},
\end{equation*}
which, together with \eqref{nxfp}, shows that
\begin{equation*}
\frac{2}{3}\frac{{\rm{d}}}{{\rm{d}}t}\|(\nabla_{x}f,\nabla_{v}f)\|_{L^{\frac{3}{2}}_{x,v}}^{\frac{3}{2}}\leq C(\kappa\|\nabla u\|_{L^{\infty}}+\kappa)\|(\nabla_{x}f,\nabla_{v}f)\|_{L^{\frac{3}{2}}_{x,v}}^{\frac{3}{2}},
\end{equation*}
and then, by the Gr\"onwall inequality, we have
\begin{equation}\label{nL32}
\begin{aligned}
&\sup_{t\in(0,T]}\|(\nabla_{x}f,\nabla_{v}f)(t)\|_{L^{\frac{3}{2}}_{x,v}}\\
&\leq\exp\left(C\int_{0}^T\kappa\|\nabla u\|_{L^{\infty}}{\rm{d}}t+C\kappa T\right)\|(\nabla_{x}f_{0},\nabla_{v}f_{0})\|_{L^{\frac{3}{2}}_{x,v}}.
\end{aligned}
\end{equation}
Hence, we conclude the estimate \eqref{nbfp} from the combination of \eqref{clspri}, \eqref{L32} and \eqref{nL32}, and the proof of Lemma \ref{lem9} is completed.
\end{proof}

\section{Proofs of main results}\label{secprf}

With the help of all the estimates obtained in Section \ref{secape}, we are ready to prove Theorems \ref{main1} and \ref{main2}.

\begin{proof}[Proof of Theorem {\rm\ref{main1}}]
Let $(\rho_{0},u_{0},f_{0})$ be the initial data satisfying \eqref{indacd} and \eqref{inenrgcd}. According to Proposition \ref{prop2}, there exists a maximal time $T_{*}>0$ such that the Cauchy problem \eqref{NSV}--\eqref{inda} has a unique strong solution $(\rho,u,f)$
fulfilling \eqref{result1} on $(0,T_{*}]\times\mathbb{R}^{3}\times\mathbb{R}^{3}$. Now we define
\begin{equation}\label{T0}
T_{0}:=\sup\left\{T\in (0,T_{*}]\Big|\int_{0}^T\left(\mu^{-3}\|\nabla u\|_{L^{2}}^{4}+\kappa\|u\|_{L^{\infty}}+20\kappa\|\nabla u\|_{L^{\infty}}\right){\rm{d}}t\leq 1\right\}.
\end{equation}
By \eqref{result1}, there exists a small time $T_{1}\in(0,T_{*})$ such that $T_{0}\geq T_{1}>0$.

We claim that
\begin{equation*}
T_{*}=T_{0}=\infty,
\end{equation*}
as long as \eqref{inenrgcd} holds. Indeed, if $T_{0}<T_{*}$, then one deduces from Proposition \ref{prop3} that
\begin{equation}\label{apriorieee}
\int_{0}^{T_{0}}\left(\mu^{-3}\|\nabla u\|_{L^{2}}^{4}+\kappa\|u\|_{L^{\infty}}+20\kappa\|\nabla u\|_{L^{\infty}}\right){\rm{d}}t\leq \frac{1}{2},
\end{equation}
for all $T\in(0,T_{0}]$. This contradicts the definition of $T_{0}$ in \eqref{T0}, and hence we have $T_{0}=T_{*}$. In order to show $T_{*}=\infty$, we assume $T_{*}<\infty$ toward a contradiction. Since the uniform estimate \eqref{apriorieee} holds for all $T\in(0,T_{*}]$, by virtue of Proposition \ref{prop3} and Lemmas \ref{lem1}, \ref{lem4}, \ref{lem8} and \ref{lem9}, one can take $(\rho,u,f)(t)$ for a time $t$ sufficiently close to $T_{*}$ as new initial data and obtain the existence of solutions beyond the time $T_{*}$ due to Proposition \ref{prop2}, which contradicts the maximality of $T_{*}$. Therefore, $(\rho,u,f)$ is indeed a global strong solution to the Cauchy problem  \eqref{NSV}--\eqref{inda}. In addition, according to \eqref{T0}, Proposition \ref{prop3} and  Lemmas \ref{lem1}--\ref{lem9}, one can show that the global strong solution $(\rho,u,f)$ satisfies the property \eqref{result1}.

To complete the proof of Theorem \ref{main1}, we finally prove the uniqueness issue. For given time $T>0$, let $(\rho_{1},u_{1},P_{1},f_{1})$ and $(\rho_{2},u_{2},P_{2},f_{2})$ be two solutions satisfying \eqref{result1} on $[0,T]$ subject to the same initial data $(\rho_{0},u_{0},f_{0})$. We set
\begin{equation*}
(\tilde{\rho},\tilde{u},\tilde{P},\tilde{f}) =(\rho_{1}-\rho_{2},u_{1}-u_{2}, P_{1}-P_{2},f_{1}-f_{2}),
\end{equation*}
which satisfies
\begin{equation}\label{diso}
\left\{
\begin{aligned}
&\tilde{\rho}_{t}+u_{1}\cdot\nabla_{x}\tilde{\rho}=-\tilde{u}\cdot\nabla_{x}\rho_{2},\\
&\rho_{1} (\tilde{u}_{t}+u_{1}\cdot\nabla_{x}\tilde{u})+\nabla_{x}\tilde{P}-\mu\Delta_{x}\tilde{u}+\int_{\mathbb{R}^{3}}\kappa\tilde{u} f_{1} {\rm{d}}v\\
&=-\tilde{\rho}(u_{2})_{t}-\rho_{1}\tilde{u}\cdot\nabla_{x} u_{2}-\int_{\mathbb{R}^{3}}\kappa(u_{2}-v)\tilde{f} {\rm{d}}v,\\
&{\rm{div}}_{x}\tilde{u}=0,\\
&\tilde{f}_{t}+v\cdot\nabla_{x}\tilde{f}+\kappa {\rm{div}}_{v}( (u_{1}-v)\tilde{f} )=-\kappa\tilde{u} \cdot\nabla_{v}f_{2}.
\end{aligned}
\right.
\end{equation}
Multiplying $\eqref{diso}_{1}$ by $|\tilde{\rho}|^{\frac{1}{2}}{\rm{sign}}\,\tilde{\rho}$ and integrating over $\mathbb{R}^{3}$, we have
\begin{equation*}
\begin{aligned}
\frac{2}{3}\frac{{\rm{d}}}{{\rm{d}}t}\|\tilde{\rho}\|_{L^\frac{3}{2}}^\frac{3}{2}
&\leq \|\nabla u_{1}\|_{L^{\infty}}\|\tilde{\rho}\|_{L^\frac{3}{2}}^\frac{3}{2}+\|\nabla\rho_{2}\|_{L^{2}}\|\tilde{u}\|_{L^{6}}\|\tilde{\rho}\|_{L^{\frac{3}{2}}}^\frac{1}{2}\\
&\leq \|\nabla u_{1}\|_{L^{\infty}}\|\tilde{\rho}\|_{L^\frac{3}{2}}^\frac{3}{2}+C\|\nabla \tilde{u}\|_{L^{2}}\|\tilde{\rho}\|_{L^{\frac{3}{2}}}^\frac{1}{2},
\end{aligned}
\end{equation*}
which leads to
\begin{equation}\label{deltarho}
\begin{aligned}
\|\tilde{\rho}\|_{L^{\frac{3}{2}}}&\leq C\exp{\left(\int_{0}^t\|\nabla u_{1}\|_{L^{\infty}}{\rm{d}}\tau\right)} \int_{0}^t\|\nabla\tilde{u}\|_{L^{2}}{\rm{d}}\tau\\
&\leq Ct^{\frac{1}{2}}\left(\int_{0}^t\|\nabla\tilde{u}\|_{L^{2}}^{2}{\rm{d}}\tau\right)^{\frac{1}{2}}.
\end{aligned}
\end{equation}
Next, multiplying $\eqref{diso}_{4}$ by $|\tilde{f}|^{\frac{1}{5}}{\rm{sign}}\, \tilde{f}$ and then integrating over $\mathbb{R}^{3}\times\mathbb{R}^{3}$, we get
\begin{equation}\label{fffff}
\begin{aligned}
\frac{5}{6}\frac{{\rm{d}}}{{\rm{d}}t}\|\tilde{f}\|_{L^{\frac{6}{5}}_{x,v}}^{\frac{6}{5}}
&\leq\frac{\kappa}{2}\|\tilde{f}\|_{L^{\frac{6}{5}}_{x,v}}^{\frac{6}{5}}+\kappa\|\tilde{u}\|_{L^{6}}\|\nabla_{v}f_{2}\|_{L^{\frac{3}{2}}_{x}(L^{\frac{6}{5}}_{v})}\|\tilde{f}\|_{L^{\frac{6}{5}}_{x,v}}^{\frac{1}{5}}\\
&\leq C\kappa\|\tilde{f}\|_{L^{\frac{6}{5}}_{x,v}}^{\frac{6}{5}}+C\kappa\|\nabla \tilde{u}\|_{L^{2}}\|\tilde{f}\|_{L^{\frac{6}{5}}_{x,v}}^{\frac{1}{5}},
\end{aligned}
\end{equation}
where we have used $\|\nabla_{v}f_{2}\|_{L^{\frac{3}{2}}_{x}(L^{\frac{6}{5}}_{v})}\leq C\|\nabla_{v}f_{2}\|_{L^{\frac{3}{2}}_{x,v}}$ due to the compact support of $f_{2}$ with respect to $v$. Thus, the Gr\"onwall inequality to \eqref{fffff} gives
\begin{equation}\label{deltaf}
\|\tilde{f}\|_{L^{\frac{6}{5}}_{x,v}}\leq C_{T}\int_{0}^{t}\kappa\|\nabla \tilde{u}\|_{L^{2}} {\rm{d}}\tau \leq C_{T}t^{\frac{1}{2}} \left(\int_{0}^{t}\|\nabla \tilde{u}\|_{L^{2}}^{2} {\rm{d}}\tau\right)^{\frac{1}{2}}.
\end{equation}
Since $f$ is compactly supported in $v$, the estimate \eqref{deltaf} implies
\begin{equation}\label{deltanj}
\|(n_{\tilde{f}},j_{\tilde{f}})\|_{L^{\frac{6}{5}}}\leq C_{T}t^{\frac{1}{2}} \left(\int_{0}^{t}\|\nabla \tilde{u}\|_{L^{2}}^{2} {\rm{d}}\tau\right)^{\frac{1}{2}}.
\end{equation}
Furthermore, multiplying $\eqref{diso}_{2}$ by $\tilde{u}$, integrating the resulting equation by parts and using \eqref{deltarho} and \eqref{deltanj}, we deduce that
\begin{equation*}
\begin{aligned}
&\frac{1}{2}\frac{{\rm{d}}}{{\rm{d}}t}\int_{\mathbb{R}^{3}}\rho_{1}|\tilde{u}|^{2}{\rm{d}}x+\int_{\mathbb{R}^{3}}\mu|\nabla\tilde{u}|^{2}{\rm{d}}x+\int_{\mathbb{R}^{3}}\kappa n_{f_{1}}|\tilde{u}|^{2}{\rm{d}}x\\
&\leq\|(u_{2})_{t}\|_{L^{6}}\|\tilde{\rho}\|_{L^\frac{3}{2}}\|\tilde{u}\|_{L^{6}}+\|\nabla u_{2}\|_{L^{\infty}}\int_{\mathbb{R}^{3}}\rho_{1}|\tilde{u}|^{2}{\rm{d}}x\\
&\quad+\|u_{2}\|_{L^{6}}\|\nabla u_{2}\|_{L^{\infty}}\|\tilde{\rho}\|_{L^\frac{3}{2}}\|\tilde{u}\|_{L^{6}}+\|u_{2}\|_{L^{\infty}}\|\tilde{u}\|_{L^{6}}\|n_{\tilde{f}}\|_{L^\frac{6}{5}}+\|\tilde{u}\|_{L^{6}}\|j_{\tilde{f}}\|_{L^\frac{6}{5}}\\
&\leq C_{T}\Big(\|\nabla u\|_{L^{\infty}}+t\|\nabla (u_{2})_{t}\|_{L^{2}}^{2}+t\|\nabla u_{2}\|_{H^{1}\cap W^{1,6}}^{2}\Big)\int_{0}^{t}\|\nabla \tilde{u}\|_{L^{2}}^{2} {\rm{d}}\tau+\frac{\mu}{2}\int_{\mathbb{R}^{3}}|\nabla\tilde{u}|^{2}{\rm{d}}x,
\end{aligned}
\end{equation*}
from which we obtain for $t\in(0,T]$ that
\begin{equation}\label{difl}
\frac{{\rm{d}}}{{\rm{d}}t}\widetilde{G}(t)\leq C_{T}\Big(\|\nabla u\|_{L^{\infty}}+t\|\nabla (u_{2})_{t}\|_{L^{2}}^{2}+t\|\nabla u_{2}\|_{H^{1}\cap W^{1,6}}^{2} \Big)\widetilde{G}(t),
\end{equation}
where $\widetilde{G}(t)$ is defined by
\begin{equation*}
\widetilde{G}(t):=\int_{\mathbb{R}^{3}}\rho_{1}|\tilde{u}|^{2}{\rm{d}}x+\int_{0}^{t}\int_{\mathbb{R}^{3}}\mu|\nabla\tilde{u}|^{2}{\rm{d}}x+\int_{\mathbb{R}^{3}}\kappa n_{f_{1}}|\tilde{u}|^{2}{\rm{d}}x.
\end{equation*}
Due to $t^{\frac{1}{2}} \nabla (u_{2})_{t}\in L^{2}(0,T;L^{2})$ and $t^{\frac{1}{2}} \nabla u_{2}\in L^{2}(0,T;H^{1}\cap W^{1,6})$, making use of the Gr\"onwall inequality on \eqref{difl}, one has $\widetilde{G}(t)\equiv 0$ for $t\in (0,T]$. This together with \eqref{deltarho} and \eqref{deltaf}  implies that $(\rho_{1},u_{1},f_{1})\equiv(\rho_{2},u_{2},f_{2})$. Finally, it follows from the Stokes estimate on $\eqref{diso}_{2}$ that $P_{1}\equiv P_{2}$, and thus the proof of Theorem \ref{main1} is completed.
\end{proof}

\begin{proof}[Proof of Theorem {\rm\ref{main2}}]
Let $(\rho_{0},u_{0},f_{0})$ be the initial data satisfying \eqref{indacd2} and \eqref{inenrgcd2}. For any $\varepsilon\in(0,1)$, we regularize the initial data as follows:
\begin{equation}\label{weakdata}
(\rho_{0}^{\varepsilon}(x),u_{0}^{\varepsilon}(x),f_{0}^{\varepsilon}(x,v)):=(J_{1}^{\varepsilon}\ast\rho_{0}(x)+\varepsilon,J_{1}^{\varepsilon}\ast u_{0}(x),J_{1}^{\varepsilon}\ast J_{2}^{\varepsilon}\ast(f_{0}\mathbf{1}_{|v|\leq \varepsilon^{-1}})(x,v)),
\end{equation}
where $J_{1}^{\varepsilon}(x)$ and $J_{2}^{\varepsilon}(v)$ are the Friedrichs mollifier with respect to the variables $x$ and $v$, respectively, and $\mathbf{1}_{|v|\leq \varepsilon^{-1}}\in C_{0}^{\infty}(\mathbb{R})$ is the cut-off function such that $\mathbf{1}_{|v|\leq \varepsilon^{-1}}=1$ for $|v|\leq \varepsilon^{-1}$, and $\mathbf{1}_{|v|\leq \varepsilon^{-1}}=0$ for $|v|\geq 2\varepsilon^{-1}$. It is easy to verify that $(\rho_{0}^{\varepsilon}, u_{0}^{\varepsilon},f_{0}^{\varepsilon})$ satisfies the assumptions \eqref{indacd2}--\eqref{inenrgcd2} uniformly in $\varepsilon\in(0,1)$ and as $\varepsilon\rightarrow 0$, one has
\begin{equation*}
\left\{
\begin{aligned}
&\begin{aligned}
&\rho^{\varepsilon}_{0}\rightarrow \rho_{0}&&\text{in}~~L^{p} , &&\frac{3}{2}\leq p<\infty,\\
&\nabla u_{0}^{\varepsilon}\rightarrow \nabla u_{0}&&\text{in}~~ L^{2},\\
&f_{0}^{\varepsilon}\rightarrow  f_{0}&&\text{in}~~ L^{p}_{x,v} , &&1\leq p<\infty,
\end{aligned}\\
&\frac{1}{2}\int_{\mathbb{R}^{3}}\rho^{\varepsilon}_{0}|u^{\varepsilon}_{0}|^{2}{\rm{d}}x+\frac{1}{2}\int_{\mathbb{R}^{3}\times\mathbb{R}^{3}}|v|^{2}f^{\varepsilon}_{0} {\rm{d}}v{\rm{d}}x\rightarrow E_{0}.
\end{aligned}
\right.
\end{equation*}
In addition, it holds for $q>5$ that
\begin{equation*}
\begin{aligned}
\|(1+|v|^{2})f^{\varepsilon}_{0}\|_{L^{1}_{v}(L^{\infty}_{x})}&\leq \| (1+|v|^{q})f^{\varepsilon}_{0}\|_{L^{\infty}_{x,v}}\int_{\mathbb{R}^{3}}\frac{1+|v|^{2}}{1+|v|^{q}}{\rm{d}}v\\
&\leq C\|(1+|v|^{q})f_{0}\|_{L^{\infty}_{x,v}}.
\end{aligned}
\end{equation*}

Owing to Theorem \ref{main1}, there exists a unique global strong solution $(\rho^{\varepsilon},u^{\varepsilon}, f^{\varepsilon})$ for any $\varepsilon\in (0,1)$ to the system \eqref{NSV} with the initial data $(\rho_{0}^{\varepsilon},u_{0}^{\varepsilon},f_{0}^{\varepsilon})$. For any given time $T>0$, it follows from Proposition \ref{prop3} and Lemmas \ref{lem1}, \ref{lem3}, \ref{lem4} that $(\rho^{\varepsilon},u^{\varepsilon},P^{\varepsilon},f^{\varepsilon})$ has the uniform bounds \eqref{clspri}, \eqref{rhoinfty}, \eqref{eptd}, \eqref{dsin}, \eqref{exdsin}, \eqref{twds}, \eqref{extwds} and \eqref{PL2}--\eqref{extwdsjj}. Thus, by virtue of the Aubin-Lions lemma and the Cantor diagonal argument, there exists a limit $(\rho,u,P,f)$ such that up to a subsequence, it holds as $\varepsilon\rightarrow0$ that
\begin{equation*}
\left\{
\begin{aligned}
&\rho^{\varepsilon}\overset{\ast}{\rightharpoonup}\rho&&\text{in}~~ L^{\infty}(0,T;L^{\frac{3}{2}}\cap L^{\infty}),\\
&u^{\varepsilon}\rightarrow u&&\text{in}~~ L^{2}(0,T;H^{1}_{loc}),\\
&P^{\varepsilon}\overset{}{\rightharpoonup} P&&\text{in}~~ L^{2}(0,T;H^{1}),\\
&f^{\varepsilon}\overset{\ast}{\rightharpoonup} f&&\text{in}~~ L^{\infty}(0,T;L^{\infty}_{x,v}),
\end{aligned}
\right.
\end{equation*}
and
\begin{equation*}
\int_{\mathbb{R}^{3}}(u^{\varepsilon}-v)f^{\varepsilon}{\rm{d}}v\overset{}{\rightharpoonup} \int_{\mathbb{R}^{3}}(u-v)f{\rm{d}}v\quad\text{in}~~\mathcal{D}^{\prime}((0,T)\times\mathbb{R}^{3}).
\end{equation*}
Therefore, it is easy to check that $(\rho,u,f)$ solves the system \eqref{NSV} in the sense of distributions. Due to the Fatou property, $(\rho,u,f)$ satisfies the uniform estimates in Lemmas \ref{lem1}--\ref{lem5} and \ref{lem7}, the large-time behavior $\eqref{r2}_{1}$ and the energy inequality \eqref{enrgineq} for a.e. $t\in(0,T]$. From the renormalized arguments for $\eqref{NSV}_{1}$ and $\eqref{NSV}_{4}$ (refer to \cite{Lions,Bouchut}), one has $\rho\in C([0,T];L^{\frac{3}{2}})$ and $f\in C([0,T];L^{1}_{x,v})$. It follows from $\rho u\in L^{\infty}(0,T;L^{2})$ and $(\rho u)_{t}\in L^{\infty}(0,T;H^{-1})$ that $\rho u \in C([0,T];H^{-1})$, and therefore $(\rho,u,f)$ is indeed a global weak solution to the Cauchy problem \eqref{NSV}--\eqref{inda} in the sense of Definition \ref{defwkslt}.  Repeating the same argument as in Lemma \ref{lem6}, we have the asymptotic behaviors $\eqref{r2}_{2}$--$\eqref{r2}_{3}$. Thus, the proof of Theorem \ref{main2} is completed.
\end{proof}

\section*{Appendix}\label{appendix}
\setcounter{equation}{0}
\renewcommand{\theequation}{A.\arabic{equation}}
\setcounter{section}{0}
\renewcommand{\thesection}{A}

In this section, we provide some auxiliary lemmas that are used frequently throughout this paper. We first recall the Gagliardo-Nirenberg inequality (e.g., refer to \cite{Nirenberg}).

\begin{lemma}\label{GNineq}
Let $f\in L^{q}(\mathbb{R}^{3})$ and $D^{s}f\in L^r(\mathbb{R}^{3})$ with $1\leq q,\,r\leq\infty$. For $0\leq k<s$, there exists a constant $C>0$ such that
\begin{equation}\label{GNSine}
\|D^kf\|_{L^p}\leq C\|D^sf\|_{L^r}^\theta\|f\|_{L^{q}}^{1-\theta},
\end{equation}
where
\begin{equation*}
\frac{1}{p}-\frac{k}{3}=\left(\frac{1}{r}-\frac{s}{3}\right)\theta+\frac{1}{q}(1-\theta)~~\text{and}~~ \frac{k}{s}\leq\theta<1.
\end{equation*}
\end{lemma}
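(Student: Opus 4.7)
The plan is to reduce to smooth compactly supported functions by density and then argue via Littlewood--Paley decomposition in frequency space, together with Bernstein inequalities. The scaling relation for $\theta$ (the first equation in the statement) is forced by testing both sides against the family $f_\lambda(x) = f(\lambda x)$: the left-hand side scales as $\lambda^{k-3/p}$ while the right-hand side scales as $\lambda^{\theta(s-3/r) + (1-\theta)(-3/q)}$, and matching powers produces exactly the stated relation on $\theta$.

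First, I would decompose $f = \sum_{j \in \mathbb{Z}} \Delta_j f$ with $\Delta_j$ localizing to the dyadic annulus $|\xi| \sim 2^j$, and recall Bernstein's inequality
\begin{equation*}
\|D^k \Delta_j g\|_{L^p} \leq C\, 2^{jk + 3j\left(\frac{1}{a} - \frac{1}{p}\right)} \|\Delta_j g\|_{L^a}, \qquad 1 \leq a \leq p \leq \infty.
\end{equation*}
Second, I would split the dyadic sum at a threshold $2^N$ to be chosen. On low frequencies $j \leq N$, Bernstein dominates $\|D^k \Delta_j f\|_{L^p}$ by a positive geometric factor in $j$ times $\|\Delta_j f\|_{L^q}$, which sums (via Littlewood--Paley square-function bounds in $L^q$) to $\|f\|_{L^q}$. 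On high frequencies $j > N$, write $D^k \Delta_j f = D^{k-s}(D^s \Delta_j f)$ and apply Bernstein in the opposite direction, dominating by a negative geometric factor times $\|\Delta_j D^s f\|_{L^r}$. Summing the two contributions yields an estimate of the form
\begin{equation*}
\|D^k f\|_{L^p} \leq C \bigl( 2^{N\alpha} \|f\|_{L^q} + 2^{-N\beta} \|D^s f\|_{L^r} \bigr),
\end{equation*}
with $\alpha, \beta > 0$ determined by the exponent relation; optimizing $N$ so that the two terms balance produces the desired inequality with the correct weight $\theta$.

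The main obstacle will be the endpoint cases. When $\frac{1}{r} - \frac{s}{3} \leq 0$ the natural target space of the Sobolev embedding is BMO or a H\"older space rather than a Lebesgue space, and the Bernstein step has to be adapted accordingly; when $p \in \{1, \infty\}$ the Littlewood--Paley square function is not $L^p$-bounded and one should instead rely on Besov embeddings of the type $\dot{B}^{s}_{r,1} \hookrightarrow L^\infty$. The stated hypothesis $\theta < 1$ together with the exponent constraint sidesteps the most delicate endpoint; the borderline $\theta = k/s$ corresponds precisely to the homogeneous Sobolev embedding $\dot{W}^{s,r} \hookrightarrow \dot{W}^{k,p}$, which can be invoked as a black box through Riesz potentials and the Hardy--Littlewood--Sobolev inequality, and the remaining non-endpoint range is then recovered by Hölder interpolation between this embedding and the hypothesis $f \in L^q$.
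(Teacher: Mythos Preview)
The paper does not prove this lemma at all: it is stated in the Appendix with the phrase ``We first recall the Gagliardo--Nirenberg inequality (e.g., refer to \cite{Nirenberg})'' and no further argument. So there is nothing to compare your proof against; the authors treat it as a classical black box.

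Your Littlewood--Paley outline is a standard modern route and is essentially correct in the non-endpoint regime. The high/low frequency split, Bernstein on each dyadic block, and optimization over the cut $N$ is exactly how one derives product-form interpolation inequalities; the scaling check you give pins down $\theta$ correctly. Two small points worth tightening. First, the convergence of your two geometric series requires $\alpha = k + 3(1/q - 1/p) > 0$ and $\beta = (s-k) - 3(1/r - 1/p) > 0$; the exponent relation together with $\theta < 1$ forces $\beta > 0$ (with $\beta = 0$ precisely at the excluded endpoint $\theta = 1$), but $\alpha > 0$ is equivalent to $1/r - s/3 < 1/q$, which you should state as an implicit hypothesis or handle separately. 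Second, summing $\sum_j \|\Delta_j f\|_{L^q}$ back to $\|f\|_{L^q}$ is not automatic: the square-function bound gives $\ell^2$ control, not $\ell^1$, so you need the strict geometric decay to absorb the $\ell^1$--$\ell^2$ gap, or else pass through the Besov scale $\dot B^0_{q,\infty}$. You have correctly flagged the genuine endpoint issues ($p \in \{1,\infty\}$, the $\theta = k/s$ Sobolev-embedding endpoint), and your suggested workarounds (Besov embeddings, Hardy--Littlewood--Sobolev for Riesz potentials) are the right tools.
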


\vspace{2mm}

Next, we consider the Cauchy problem for the Stokes equations below
\begin{equation}\label{steq}
\left\{
\begin{aligned}
&-\mu\Delta u+\nabla P=g,\quad x\in\mathbb{R}^{3},\\
&{\rm{div}}\, u=0,\\
&u(x)\rightarrow 0~~\text{as}~~|x|\rightarrow{\infty},
\end{aligned}
\right.
\end{equation}
and we have the following regularity estimates.

\begin{lemma}\label{SSineq}
If $g\in L^{\frac{6}{5}}\cap L^p$ with $p\in(\frac{6}{5},\infty)$, there exists some positive constant $C$ depending only on $p$ such that a unique weak solution $(u,P)\in D_{0,\sigma}^{1}\times L^{2}$ to the Cauchy problem \eqref{steq} exists and satisfies
\begin{equation}\label{re1}
\mu\|\nabla u\|_{L^{2}}+\|P\|_{L^{2}}\leq C\|g\|_{L^{\frac{6}{5}}},
\end{equation}
and
\begin{equation}\label{re2}
\mu\|\nabla^{2}u\|_{L^p}+\|\nabla P\|_{L^p}\leq C\|g\|_{L^p}.
\end{equation}
\end{lemma}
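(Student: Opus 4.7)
The plan is to treat the existence and the two estimates in three separate steps, following the classical Stokes theory on $\mathbb{R}^3$.

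First, for existence and uniqueness together with \eqref{re1}, I would use the variational formulation on the divergence-free energy space. Define $a(u,\phi):=\mu\int_{\mathbb{R}^3}\nabla u:\nabla\phi\,dx$ on $D^1_{0,\sigma}\times D^1_{0,\sigma}$, and the linear form $\ell(\phi):=\int_{\mathbb{R}^3} g\cdot\phi\,dx$. Since $D^1_{0,\sigma}$ is Hilbert with inner product $a(\cdot,\cdot)/\mu$, and the Sobolev embedding $D^1\hookrightarrow L^6$ gives $|\ell(\phi)|\le\|g\|_{L^{6/5}}\|\phi\|_{L^6}\le C\|g\|_{L^{6/5}}\|\nabla\phi\|_{L^2}$, the Lax--Milgram theorem yields a unique $u\in D^1_{0,\sigma}$ satisfying $a(u,\phi)=\ell(\phi)$ for all $\phi\in D^1_{0,\sigma}$ and furnishes the bound $\mu\|\nabla u\|_{L^2}\le C\|g\|_{L^{6/5}}$. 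To recover the pressure, I apply de Rham's theorem (or equivalently the Helmholtz decomposition on $\mathbb{R}^3$): the distribution $\mu\Delta u+g$ is divergence-free tested only against divergence-free fields, hence it is a gradient $\nabla P$ for some $P\in L^2_{\rm loc}$, uniquely determined up to a constant and fixed by imposing $P\to 0$ at infinity. Taking $\mathrm{div}$ of $\eqref{steq}_1$ gives the Poisson equation $-\Delta P=-\mathrm{div}\,g$, so $P=R_jR_k\,g^{jk}$-type expressions involving Riesz transforms of $g$ (more precisely $\nabla P=(I-\mathbb{P})g$ where $\mathbb{P}$ is the Leray projector). Then $P=(-\Delta)^{-1}\mathrm{div}\,g$ belongs to $\dot H^1$ in the sense that $\nabla P\in L^{6/5}$ by Calder\'on--Zygmund, and the Sobolev--Hardy--Littlewood embedding $\dot W^{1,6/5}\hookrightarrow L^2$ delivers $\|P\|_{L^2}\le C\|g\|_{L^{6/5}}$, completing \eqref{re1}.

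Second, for the interior $L^p$ estimate \eqref{re2}, I would exploit the explicit solution formulas for the Stokes system on the whole space. Using the Oseen tensor $E(x)$ and the fundamental pressure solution $Q(x)$, one has the representation
\begin{equation*}
u(x)=\mu^{-1}\int_{\mathbb{R}^3}E(x-y)g(y)\,dy,\qquad P(x)=\int_{\mathbb{R}^3}Q(x-y)\cdot g(y)\,dy,
\end{equation*}
where $\nabla^2 E$ and $\nabla Q$ are Calder\'on--Zygmund kernels of convolution type (homogeneous of degree $-3$, smooth off the origin, with vanishing mean on spheres). The standard Calder\'on--Zygmund $L^p$ theorem for $1<p<\infty$ then gives $\mu\|\nabla^2 u\|_{L^p}+\|\nabla P\|_{L^p}\le C_p\|g\|_{L^p}$ for every $p\in(1,\infty)$, in particular for the range $p\in(6/5,\infty)$ stated in the lemma. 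Uniqueness in the larger class is preserved by a Liouville argument: any solution with vanishing data must be a harmonic polynomial of low degree, excluded by the decay at infinity.

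The main technical obstacle is the $L^p$ singular integral estimate, which is really where the hypothesis $1<p<\infty$ enters; everything else is soft functional analysis. In principle one should verify the weak-$(1,1)$/strong-$(p,p)$ boundedness of $\nabla^2 E\ast$ and $\nabla Q\ast$, but since these are classical results (see e.g.\ Galdi's monograph on the Navier--Stokes equations), the proof in the paper would simply quote them. The restriction $p>6/5$ is only needed to combine \eqref{re1} and \eqref{re2} (so that $g\in L^{6/5}\cap L^p$ gives both bounds simultaneously and a unique $(u,P)\in D^1_{0,\sigma}\times L^2$ in which the $L^p$ estimate is valid a posteriori).
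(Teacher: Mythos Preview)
Your proposal is correct and follows essentially the same route as the paper: the paper also tests $\eqref{steq}_1$ against $u$ to get $\mu\|\nabla u\|_{L^2}\le C\|g\|_{L^{6/5}}$, recovers the pressure from $P=-(-\Delta)^{-1}\mathrm{div}\,g$ and bounds $\|P\|_{L^2}$ via the Sobolev embedding, then dismisses \eqref{re2} with ``a similar argument'' (i.e., the Calder\'on--Zygmund bounds you spell out). Your version simply supplies more of the functional-analytic scaffolding (Lax--Milgram, de Rham, the Oseen tensor) than the paper's terse proof, which outsources existence/uniqueness to Galdi and does only the two estimates explicitly.
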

\begin{proof}
The existence and uniqueness of the solution to \eqref{steq} is classical, e.g., cf. \cite{Galdi}. Multiplying $\eqref{steq}_{1}$ by $u$ and then integrating by parts together with $\eqref{steq}_{2}$, we obtain
\begin{equation*}
\mu\|\nabla u\|_{L^{2}}^{2}=\int_{\mathbb{R}^{3}} g\cdot u{\rm{d}}x\leq\|g\|_{L^{\frac{6}{5}}}\|u\|_{L^{6}}\leq C\|g\|_{L^{\frac{6}{5}}}\|\nabla u\|_{L^{2}},
\end{equation*}
which implies
\begin{equation*}
\mu\|\nabla u\|_{L^{2}}\leq C\|g\|_{L^{\frac{6}{5}}}.
\end{equation*}
In addition, we can derive that
\begin{equation*}
P=-\mu(-\Delta)^{-1}{\rm{div}}\Delta u-(-\Delta)^{-1}{\rm{div}}\,g,
\end{equation*}
which combined with the Sobolev inequality leads to
\begin{equation*}
\|P\|_{L^{2}}\leq\|(-\Delta)^{-1}{\rm{div}}\, g\|_{L^{2}}\leq C\|g\|_{L^{\frac{6}{5}}}.
\end{equation*}
Thus, one can conclude \eqref{re1}. A similar argument leads to \eqref{re2}.
\end{proof}

We also need the following version of the inverse function theorem, which can be shown by direct computations (or see \cite{HanKwan2020}).

\begin{lemma}\label{IFthm}
For $\Omega=\mathbb{R}^{3}$, if $\phi:\Omega\rightarrow\Omega$ is $C^{1}$ and satisfies $\|\nabla\phi\|_{L^{\infty}}<1$, then $f:={\rm{Id}}+\phi$ is a $C^{1}$-diffeomorphism of $\Omega$ onto itself satisfying $\|\nabla f\|_{L^{\infty}}\leq\left(1-\|\nabla\phi\|_{L^{\infty}}\right)^{-1}$. If furthermore $\|\nabla\phi\|_{L^{\infty}}\leq\frac{1}{9}$, then ${\rm{det}}\,\nabla f\geq\frac{1}{2}$.
\end{lemma}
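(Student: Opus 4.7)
The plan is to split the proof into two parts: first establishing that $f = \mathrm{Id} + \phi$ is a $C^1$-diffeomorphism of $\mathbb{R}^3$ onto itself with the Lipschitz-type bound in the statement, and then proving the determinant lower bound under the sharper smallness assumption $\|\nabla\phi\|_{L^\infty} \leq 1/9$.

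For the diffeomorphism part, I would argue in three steps. \emph{Injectivity}: if $f(x_1) = f(x_2)$ then $x_1 - x_2 = \phi(x_2) - \phi(x_1)$, and the fundamental theorem of calculus along the segment $[x_1, x_2]$ gives $|x_1 - x_2| \leq \|\nabla\phi\|_{L^\infty} |x_1 - x_2|$, which forces $x_1 = x_2$ since $\|\nabla\phi\|_{L^\infty} < 1$. \emph{Surjectivity}: for each $y \in \mathbb{R}^3$, the map $T_y(x) := y - \phi(x)$ is a strict contraction on the complete space $\mathbb{R}^3$, and the Banach fixed-point theorem produces a unique $x^\ast$ with $f(x^\ast) = y$. \emph{Regularity}: since $\nabla f = I + \nabla\phi$ has eigenvalues of the form $1 + \lambda_i$ with $|\lambda_i| \leq \|\nabla\phi\|_{L^\infty} < 1$, the Jacobian is invertible everywhere, so the classical inverse function theorem makes $f^{-1}$ locally $C^1$, and combined with the global bijectivity, $f$ is a $C^1$-diffeomorphism. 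The Lipschitz estimate on $f^{-1}$ (which is what the stated inequality naturally encodes) then follows from the Neumann series
\begin{equation*}
\nabla f^{-1}(y) = \bigl(I + \nabla\phi(f^{-1}(y))\bigr)^{-1} = \sum_{k=0}^{\infty} \bigl(-\nabla\phi(f^{-1}(y))\bigr)^{k},
\end{equation*}
which converges in operator norm and yields $\|\nabla f^{-1}\|_{L^\infty} \leq \bigl(1 - \|\nabla\phi\|_{L^\infty}\bigr)^{-1}$.

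For the determinant lower bound, I would write $\det(I + \nabla\phi) = \prod_{i=1}^{3} (1 + \lambda_i)$, where $\lambda_i$ are the (possibly complex) eigenvalues of $\nabla\phi$, and each satisfies $|\lambda_i| \leq \|\nabla\phi\|_{L^\infty} \leq 1/9$. For a real eigenvalue one has $1 + \lambda_i \geq 8/9$, while for any complex conjugate pair $\lambda, \bar\lambda$ the paired factor reads $(1+\lambda)(1+\bar\lambda) = |1+\lambda|^2 \geq (1 - |\lambda|)^2 \geq (8/9)^2$. Multiplying, $\det(I + \nabla\phi) \geq (8/9)^3 = 512/729 > 1/2$, as desired.

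The only subtlety lies in the determinant step, specifically ensuring that the bound $|\lambda_i| \leq \|\nabla\phi\|_{L^\infty}$ is compatible with the matrix-norm convention used when the lemma is invoked (in the proof of Lemma \ref{lem2}, one applies it to $\nabla\phi = e^{-\kappa t}D_v\Gamma_{t,x} - I$ whose $L^\infty_{x,v}$-norm is estimated in operator norm). All other steps are standard applications of the contraction-mapping principle, the classical inverse function theorem, and Neumann series, so no significant obstacle arises beyond this bookkeeping.
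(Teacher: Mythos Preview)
Your proof is correct and complete. The paper itself does not give a proof of this lemma; it only remarks that the result ``can be shown by direct computations (or see \cite{HanKwan2020})'' and states it without further argument. Your contraction-mapping argument for bijectivity, the Neumann series for the inverse Jacobian bound, and the eigenvalue product estimate for the determinant are exactly the kind of direct computations the paper alludes to. Your observation that the stated bound $\|\nabla f\|_{L^\infty}\leq(1-\|\nabla\phi\|_{L^\infty})^{-1}$ is most naturally read as a bound on $\nabla f^{-1}$ is also correct (the literal statement is true but trivially weaker, since $1+\|\nabla\phi\|_{L^\infty}\leq(1-\|\nabla\phi\|_{L^\infty})^{-1}$), and your flagging of the norm convention in the determinant step is appropriate: in the paper's application (Lemma~\ref{lem2}) the quantity $\|e^{-\kappa t}D_v\Gamma_{t,x}-\mathrm{Id}\|_{L^\infty_{x,v}}$ is indeed controlled as an operator norm, so the spectral-radius bound $|\lambda_i|\leq\|\nabla\phi\|$ applies directly.
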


Finally, we recall the definition of the Wasserstein distances and the Monge-Kantorovitch duality. The interested reader can refer to the book \cite{Villani}.

\begin{definition}\label{defwass}
Denote either $\mathbb{R}^{3}$ or $\mathbb{R}^{3}\times\mathbb{R}^{3}$ by $X$ and let $\mu_{1},\mu_{2}$ be the Borel measures on $X$. The $p$-Wasserstein distance between $\mu_{1}$ and $\mu_{2}$ is defined by
\begin{equation*}
W_p(\mu_{1},\mu_{2}):=\left(\inf_{\omega\in\Gamma(\mu_{1},\mu_{2})}\int_{X\times X}|z-z'|^pd\omega(z,z')\right)^{\frac{1}{p}},
\end{equation*}
where $\Gamma(\mu_{1},\mu_{2})$ represents the collection of all measures on $X\times X$ that the first and second marginal equal to $\mu_{1}$ and $\mu_{2}$, respectively.
\end{definition}

\begin{lemma}\label{MKdual}
Fix $p=1$ and $\mu_{1},\mu_{2}$ be the Borel measures on $X$, then it holds
\begin{equation*}
\begin{aligned}
&W_{1}(\mu_{1},\mu_{2})\\
&=\sup\left\{\int_{X}\psi(z)d\mu_{1}(z)-\int_{X}\psi(z)d\mu_{2}(z)\Big|~\forall\psi\in{\rm{Lip}}(X),~\|\nabla\psi\|_{L^{\infty}(X)}\leq 1\right\}.
\end{aligned}
\end{equation*}
\end{lemma}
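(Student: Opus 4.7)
The plan is to establish two inequalities. For the easy direction ($\text{RHS}\le W_1$), I would pick any admissible $\psi\in\text{Lip}(X)$ with $\|\nabla\psi\|_{L^\infty}\le 1$ and any coupling $\omega\in\Gamma(\mu_1,\mu_2)$; since the first and second marginals of $\omega$ are $\mu_1$ and $\mu_2$, one computes
\[
\int_{X}\psi\,d\mu_1-\int_{X}\psi\,d\mu_2=\int_{X\times X}(\psi(z)-\psi(z'))\,d\omega(z,z')\le\int_{X\times X}|z-z'|\,d\omega(z,z'),
\]
where the last step uses that $\|\nabla\psi\|_{L^\infty}\le1$ forces $|\psi(z)-\psi(z')|\le|z-z'|$. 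Taking the infimum over $\omega\in\Gamma(\mu_1,\mu_2)$ on the right and the supremum over admissible $\psi$ on the left yields $\text{RHS}\le W_1(\mu_1,\mu_2)$.

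For the reverse inequality, I would invoke the general Kantorovich duality for the continuous cost $c(z,z')=|z-z'|$,
\[
W_1(\mu_1,\mu_2)=\sup\Big\{\int_{X}\varphi\,d\mu_1+\int_{X}\phi\,d\mu_2 \,:\, \varphi(z)+\phi(z')\le|z-z'|\Big\},
\]
and then perform the standard $c$-transform reduction. Given an admissible pair $(\varphi,\phi)$, I would define $\phi^{c}(z):=\inf_{z'\in X}(|z-z'|-\phi(z'))$; the triangle inequality $|z_1-z'|\le|z_1-z_2|+|z_2-z'|$ shows $\phi^{c}$ is $1$-Lipschitz, and iterating the transform gives $(\phi^{c})^{c}=-\phi^{c}$. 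Since $(\phi^{c},-\phi^{c})$ is admissible and dominates $(\varphi,\phi)$ pointwise after substitution, the supremum is attained on pairs of the form $(\psi,-\psi)$ with $\psi$ being $1$-Lipschitz, which matches the RHS of the claimed identity.

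The main obstacle is the Kantorovich duality itself, i.e.\ the nontrivial minimax equality between the primal transport problem and the dual problem over bounded continuous potentials. This is usually established via a Fenchel--Rockafellar/Hahn--Banach argument on a suitable space of continuous functions, or alternatively by discrete approximation of $\mu_1,\mu_2$ combined with linear programming duality. Since this step is classical and independent of the Navier--Stokes--Vlasov structure of the paper, I would simply cite the standard reference (Villani's monograph, Theorem 1.14 or Theorem 5.10) rather than reprove the minimax equality, and only write out the $c$-transform reduction and the elementary direction explicitly.
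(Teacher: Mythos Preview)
Your proposal is correct and in fact goes further than the paper does. The paper provides no proof of this lemma at all: it is stated in the Appendix as a standard auxiliary result, with the single remark that ``the interested reader can refer to the book \cite{Villani}.'' So the paper's ``proof'' is a bare citation, whereas you sketch the easy direction explicitly, outline the $c$-transform reduction for the hard direction, and only defer the underlying Kantorovich minimax equality to Villani. Both treatments ultimately rest on the same reference; yours simply unpacks more of the argument.
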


\vspace{5mm}

\noindent\textbf{Acknowledgments.} 
The second author is grateful to Professor Rapha\"el  Danchin for the suggestion on the uniqueness of weak solutions constructed in Theorem \ref{main2} when visiting the LAMA in Universit\'e Paris-Est Cr\'eteil.

\end{document}